\setlist[enumerate]{parsep=0pt plus 4pt,topsep=0pt plus 4pt}
\newcommand{\excise}[1]{}
\newtheorem{thm}{Theorem}[section]
\newtheorem{lemma}[thm]{Lemma}
\newtheorem{cor}[thm]{Corollary}
\newtheorem{prop}[thm]{Proposition}
\newtheorem{prob}[thm]{Problem}
\theoremstyle{definition}
\newtheorem{example}[thm]{Example}
\newtheorem{remark}[thm]{Remark}
\newtheorem{defn}[thm]{Definition}
\newtheorem{conv}[thm]{Convention}
\numberwithin{equation}{section}
\newcounter{separated-sec}
\newcounter{sylvan-sec}
\newcounter{sylvan}
\newcommand{\Ring}[1]{\ensuremath{\mathbb{#1}}}
\renewcommand\>{\rangle}
\newcommand\1{\mathbbm{1}}
\newcommand\HH{{\widetilde H}{}{}}
\newcommand\KK{\mathbb{K}}
\newcommand\NN{\Ring{N}}
\newcommand\PP{\Ring{P}}
\newcommand\QQ{\Ring{Q}}
\newcommand\RR{\Ring{R}}
\newcommand\CC{\Ring{C}}
\newcommand\ZZ{\Ring{Z}}
\newcommand\bb{{\mathbf b}}
\newcommand\cc{{\mathbf c}}
\newcommand\ee{{\mathbf e}}
\newcommand\kk{\Bbbk}
\newcommand\xx{{\mathbf x}}
\newcommand\yy{{\mathbf y}}
\newcommand\zz{{\mathbf z}}
\newcommand\del{\partial}
\renewcommand\aa{{\mathbf a}}
\renewcommand\phi{\varphi}
\newcommand\bs{\backslash}
\newcommand\oS{{\hspace{.2ex}\ol{\hspace{-.2ex}S}}}
\newcommand\oT{{\ol T}}
\newcommand\oU{{\hspace{.2ex}\ol{\hspace{-.2ex}U\hspace{-.05ex}}\hspace{.05ex}}}
\newcommand\from{\leftarrow}
\newcommand\into{\hookrightarrow}
\newcommand\otni{\hookleftarrow}
\newcommand\onto{\twoheadrightarrow}
\newcommand\otno{\twoheadleftarrow}
\newcommand\spot{{\hbox{\raisebox{1pt}{\tiny$\scriptscriptstyle\bullet$}}}}
\newcommand\ffrom{\longleftarrow}
\newcommand\minus{\smallsetminus}
\newcommand\simto{\mathrel{\!\ooalign{$\fillrightmap$\cr\raisebox{.75ex}{$\,\sim\ \hspace{.2ex}$}}}}
\newcommand\sspot{{\spot\spot}}
\newcommand\verteq{\rotatebox{90}{\,$=\,$\,}}
\newcommand\nothing{\varnothing}
\newcommand\subspot{{\begin{array}{@{}c@{}}\\[-6.1ex]
		     \hspace{-.1ex}\hbox{\Large$\cdot$}
		     \\[-4ex]\end{array}\!}}
\newcommand\downmapsto{\rotatebox{-90}{$\mapsto\ $}}
\newcommand\fillbar{\mkern-6mu\cleaders\hbox{$\mkern-2mu \mathord- \mkern-2mu$}\hfill
	\mkern-6mu \mathord-}
\newcommand\filleftmap{\mathord\leftarrow \mkern-6mu
	\cleaders\hbox{$\mkern-2mu \mathord- \mkern-2mu$}\hfill
	\mkern-6mu \mathord-}
\newcommand\fillrightmap{\mathord- \mkern-6mu
	\cleaders\hbox{$\mkern-2mu \mathord- \mkern-2mu$}\hfill
	\mkern-6mu \mathord\rightarrow}
\newcommand\fillotno{\mathord\twoheadleftarrow \mkern-6mu
	\cleaders\hbox{$\mkern-2mu \mathord- \mkern-2mu$}\hfill
	\mkern-6mu \mathord-}
\newcommand\longuparrow{\left\uparrow\rlap{\phantom{$\sum_i^j$}}\right.}
\newcommand\longdownarrow{\left.\llap{$\phantom{\sum_i^j}$}\right\downarrow}
\renewcommand\epsilon{\varepsilon}
\renewcommand\implies{\Rightarrow}
\newcommand\st{\mathit{ST\hspace{-.2ex}}}
\newcommand\nf[2]{{\nicefrac{#1\!}{\!#2}}}
\newcommand\ol[1]{{\overline{#1}}}
\newcommand\wt[1]{{\widetilde{#1}}}
\newcommand\mkl[1]{\makebox[0pt][l]{$#1$}}
\newcommand\mko[1]{\makebox[0pt][c]{$#1$}}
\newcommand\nfn[2]{\nf{{\sst-\!}#1}{#2}}
\newcommand\betti[3]{{\beta_{{#1},{#2}}(#3)}}
\newcommand\edgeup[1]{/\makebox[0pt][l]{\raisebox{-.4ex}{$\!\scriptstyle#1$}}}
\newcommand\edgedown[1]{\bs\makebox[0pt][l]{\raisebox{.7ex}{$\!\scriptstyle#1$}}}
\newcommand\edgehoriz[1]{\raisebox{0pt}[0pt][0pt]{$\stackrel{#1}{\hbox{ --- }}$}}
\DeclareMathOperator\gr{gr} 
\DeclareMathOperator\tor{Tor} 
\DeclareMathOperator\image{im} 
\newcommand\dis{\displaystyle}
\newcommand\sst{\scriptscriptstyle}
\newcommand\foot{\footnotesize}
\newcommand\monomialmatrix[3]{{
\begin{array}{@{}r@{\:}r@{}c@{}l@{}}
  \begin{array}{@{}c@{}}		
	\begin{array}{@{}r@{}}
	\\
	#1
	\end{array}\!
  \end{array}						
&
  \begin{array}{@{}c@{}}		
	\begin{array}{@{}l@{}}\\				
	\end{array}						
	\\							
	\left[\begin{array}{@{}l@{}}				
	#3							
	\end{array}\!						
	\right.							
  \end{array}							
&
  #2					
&
  \begin{array}{@{}c@{}}		
	\begin{array}{@{}l@{}}\\				
	\end{array}						
	\\							
	\left.\!\begin{array}{@{}l@{}}				
	#3							
	\end{array}						
	\right]							
  \end{array}							
\end{array}
}}
\newcommand\red[1]{\color{red}#1}
\newcommand\blu[1]{\color{blue}#1}
\begin{document}

\mbox{}
\vspace{-4.1ex}
\title{Minimal resolutions of monomial ideals}
\author{\vspace{-1.3ex}John Eagon}
\address{School of Mathematics\\University of Minnesota\\Minneapolis, MN 55455}
\author{Ezra Miller}
\address{Mathematics Department\\Duke University\\Durham, NC 27708}
\urladdr{\url{http://math.duke.edu/people/ezra-miller}}
\author{Erika Ordog}
\address{Mathematics Department\\Duke University\\Durham, NC 27708}
\urladdr{\url{https://fds.duke.edu/db/aas/math/grad/ordog}}

\makeatletter
  \@namedef{subjclassname@2010}{\textup{2010} Mathematics Subject Classification}
\makeatother
\subjclass[2010]{Primary: 05E40, 13D02, 05E45, 55U15, 57Q05, 13F55;
Secondary: 05C05, 57M15, 13F20, 13C13}

\date{15 May 2020}

\begin{abstract}
An explicit, closed-form combinatorial minimal free resolution of an
arbitrary monomial ideal~$I$ in a polynomial ring in~$n$ variables
over a field of characteristic~$0$ (and almost all positive
characteristics) is defined canonically, without any choices, using
higher-dimensional generalizations of combined spanning trees for
cycles and cocycles (\emph{hedges}) in the upper Koszul simplicial
complexes of~$I$ at lattice points in~$\ZZ^n$.  The differentials in
these \emph{sylvan resolutions} are expressed as matrices whose
entries are sums over lattice paths of weights determined
combinatorially by sequences of hedges (\emph{hedgerows}) along each
lattice path.  This combinatorics enters via an explicit
matroidal expression for Moore--Penrose pseudoinverses as weighted
averages of splittings defined by hedges.  The translation from
Moore--Penrose combinatorics to free resolutions relies on Wall
complexes, which construct minimal free resolutions of graded ideals
from vertical splittings of Koszul bicomplexes.  The algebra of Wall
complexes applied to individual hedgerows yields explicit but
noncanonical combinatorial minimal free resolutions of arbitrary
monomial ideals~in~any~\mbox{characteristic}.
\end{abstract}
\maketitle

\tableofcontents

\section{Introduction}\label{s:intro}

\vspace{-1ex}
\subsection*{Overview}\label{sub:overview}
\mbox{}

\noindent
Irving Kaplansky had a habit of circulating to his students precise
problem lists for potential dissertation topics.  One such list
contained a problem on ideals generated by subdeterminants of a
matrix, which resulted in \cite{eagon-northcott1962}.  A later list,
we speculate, concerned monomial ideals, resulting in Taylor's thesis
\cite{taylor1966}, the first general construction of free resolutions
for arbitrary monomial ideals.  Since then the problem of finding
minimal free resolutions of monomial ideals in polynomial rings has
been central to the combinatorial side of commutative algebra,
stimulating an enormous amount of research on the algebraic,
combinatorial, and homological structure of monomial ideals, including
hundreds of research papers and several influential books.  The
ultimate goal is a free resolution that is universal, canonical, and
minimal, with closed-form combinatorial formulas for the
differentials.  This means that the construction should work for any
monomial ideal, involve no choices, have no redundancy in algebraic or
numerical senses, and be explicit in terms of the discrete input
that determines a monomial ideal.


The sylvan resolutions introduced here use Moore--Penrose
pseudoinverses of differentials, combinatorially characterized in
terms of \mbox{higher-dimensional} analogues of spanning trees for
cycles and cocycles following Berg \cite{berg1986}, to produce a
universal, canonical, closed-form combinatorial construction of
minimal free resolutions of arbitrary monomial ideals over fields of
characteristic~$0$ and most positive characteristics.  In any
characteristic, the spanning-tree framework produces noncanonical but
nonetheless universal combinatorial constructions of minimal free
resolutions of monomial ideals.

\enlargethispage*{1.5ex}
\vspace{-.5ex}
\subsubsection*{Acknowledgements}
A great debt goes to Joel Roberts, with whom we discussed preliminary
ideas along these lines decades ago.  EM wishes to thank Art Duval for
an enlightening conversation about simplicial spanning trees.
Alexandre Tchernev provided extremely valuable feedback on a preprint
version of this paper.
EM and EO had support from NSF DMS-1702395.
EO was supported for a semester by NSF~\mbox{DMS-1406371}.

\subsection{Prior work}\label{sub:history}
\mbox{}

\noindent
Formulas for the Betti numbers---the ranks of the free modules in a
minimal free resolution---based on the combinatorial topology of
simplicial complexes have been known since the 1970s through work of
Hochster \cite{hochster1977} and others,
but the differentials of the resolutions have remained elusive.  All
prior studies that produce differentials in resolutions of monomial
ideals have dispensed with one or more of the desired properties.  For
example, Taylor's resolution \cite{taylor1966} is not minimal;
Lyubeznik's improvement on it \cite{lyubeznik1988} is not minimal or
canonical; Eliahou--Kervaire resolutions of stable ideals
\cite{eliahou-kervaire1990} are not universal; Eagon's resolutions by
Wall complex \cite{eagon1990} are not a~priori combinatorial or
canonical, although the point of our work here is that combinatorics
of Moore--Penrose pseudoinverses remedies both of these; hull
resolutions \cite{bayer-sturmfels1998} are not minimal; Scarf
resolutions of generic monomial ideals
\cite{bayer-peeva-sturmfels1998,miller-sturmfels-yanagawa2000} are not
universal, although they can be made universal by generic deformation,
sacrificing minimality and canonicality; Yuzvinsky's resolutions using
splittings in the manner of Eagon, \mbox{applied} to LCM lattices
instead of Koszul simplicial complexes \cite{yuzvinsky1999}, are not
a~priori combinatorial, and Yuzvinsky claims they are not canonical,
although Moore--Penrose pseudoinverses remedy the latter and can in
principle remedy the former (a topic for future work); resolutions for
shellable monomial ideals \cite{batzies-welker2002} are not universal;
planar graph resolutions \cite{miller-Planar2002} are not universal,
being defined only in three variables and not even canonical there;
resolutions supported on order complexes of Betti posets
\cite{tchernev-varisco2015} are not minimal; and Buchberger
resolutions~\cite{olteanu-welker2016}~are~not~canonical~or~minimal.

The strongest combinatorial structural result for minimal free
resolutions of monomial ideals available before the current work is
that they all admit hcw-poset structures
\cite{clark-tchernev2019}---essentially poset generalizations of the
cellular structures in \cite{bayer-sturmfels1998}.  However, this
poset framework does not construct resolutions but rather imposes
structures a~posteriori on a given resolution.  That said, in a
development subsequent to the current work, \mbox{Tchernev} produced
canonical, minimal, universal resolutions that are not closed-form but
can be seen as combinatorial in the sense of being algorithmic
\cite{tchernev2019}.

\subsection{Koszul simplicial complexes and Hochster's formula}\label{sub:koszul}
\mbox{}

\noindent
The combinatorics of minimal resolutions of monomial ideals is
grounded in the local combinatorics near lattice points in the
partially ordered set of exponent vectors.

\begin{defn}\label{d:Kb}
For a monomial ideal~$I$ and a nonnegative integer vector $\bb \in
\NN^n$ with~$n$ entries, the \emph{(upper) Koszul simplicial complex}
of~$I$ in degree~$\bb$ is
$$%
  K^\bb I
  =
  \{\tau \in \{0,1\}^n \mid \xx^{\bb-\tau}\in I\}.
$$
\end{defn}

That is, standing at the lattice point~$\bb$, thought of as an
exponent vector on a monomial in the ideal~$I$, one looks backward to
see which (combinations of distinct) coordinate directions one can
move along to remain in~$I$.

\begin{thm}[Hochster's formula]\label{t:Kb}
Fix a monomial ideal $I \subseteq \kk[\xx]$ and a degree vector $\bb
\in \NN^n$.  There is a natural isomorphism of vector spaces
\begin{align*}
  \tor_i(\kk,I)_\bb
  &=
  \HH_{i-1} (K^\bb I;\kk).
\\
\intertext{Consequently, the Betti numbers of~$I$ in degree~$\bb$ can
be expressed as}
  \betti i\bb I
  &=
  \dim_\kk \HH_{i-1}(K^\bb I;\kk).
\end{align*}
\end{thm}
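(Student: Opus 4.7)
The plan is to compute $\tor_i(\kk,I)_\bb$ directly from the $\ZZ^n$-graded Koszul resolution of the residue field. Let $S = \kk[\xx]$, and let $K_\bullet$ be the Koszul complex on $x_1,\dots,x_n$, which is a minimal free resolution of $\kk$ over~$S$ with $K_i = \bigwedge^i S^n$ having basis $\{e_\tau : \tau \subseteq \{1,\dots,n\},\ |\tau|=i\}$; each $e_\tau$ sits in multidegree $\tau$ (identified with its characteristic vector in $\{0,1\}^n$), and the differential is $\partial e_\tau = \sum_{j\in\tau} \epsilon_{j,\tau}\,x_j\, e_{\tau\setminus j}$ for the standard Koszul sign $\epsilon_{j,\tau}$. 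Then $\tor_i(\kk,I) = H_i(K_\bullet \otimes_S I)$, and this identification is $\ZZ^n$-graded, so it suffices to identify the degree-$\bb$ strand of the tensor complex with the reduced simplicial chain complex of $K^\bb I$.

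Next, I would unpack that strand. A homogeneous element of $(K_i \otimes_S I)_\bb$ is a $\kk$-linear combination of pure tensors $x^{\bb-\tau}\otimes e_\tau$ with $|\tau|=i$, subject to the requirements $\bb-\tau \in \NN^n$ and $x^{\bb-\tau} \in I$. By Definition~\ref{d:Kb}, these conditions say exactly that $\tau$ is a face of $K^\bb I$ with $|\tau|=i$ vertices. Identifying $x^{\bb-\tau}\otimes e_\tau$ with the oriented simplex $\tau$ therefore yields a canonical isomorphism
\begin{equation*}
  (K_i \otimes_S I)_\bb \;\cong\; \wt C_{i-1}(K^\bb I;\kk),
\end{equation*}
where the right-hand side is the reduced simplicial chain group. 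The index shift $i \mapsto i-1$ is the usual one arising from $|\tau| = \dim\tau + 1$, and the empty face $\tau=\emptyset$ in $K^\bb I$ (present precisely when $x^\bb \in I$) corresponds to the generator of $(K_0 \otimes_S I)_\bb = \kk\cdot x^\bb$, matching the convention for $\wt C_{-1}$.

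Then I would verify that the induced differential on $(K_\bullet \otimes_S I)_\bb$ agrees with the reduced simplicial boundary. For $j \in \tau$, the summand $\epsilon_{j,\tau}\, x_j\, e_{\tau\setminus j}$ of $\partial e_\tau$ sends $x^{\bb-\tau}\otimes e_\tau$ to $\epsilon_{j,\tau}\, x^{\bb-(\tau\setminus j)}\otimes e_{\tau\setminus j}$, which under the identification above is the face of $\tau$ obtained by deleting the vertex~$j$, with the correct alternating sign. Passing to homology produces the claimed natural isomorphism $\tor_i(\kk,I)_\bb \cong \HH_{i-1}(K^\bb I;\kk)$, and the Betti number formula is then immediate from the standard identity $\betti i\bb I = \dim_\kk \tor_i(\kk,I)_\bb$.

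The main obstacle, such as it is, lies in bookkeeping: one fixes a linear order on $\{1,\dots,n\}$ to turn subsets into oriented simplices, checks that the Koszul signs $\epsilon_{j,\tau}$ then coincide with the alternating signs in the simplicial boundary map, and confirms that the contribution of the empty face at $\tor_0$ is exactly what the augmentation to $\wt C_{-1}$ records. None of this is deep, but without it the isomorphism is only defined up to signs and the $\tor_0$ case looks anomalous.
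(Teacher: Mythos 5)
Your proof is correct and coincides with the standard argument: the paper itself does not prove Theorem~\ref{t:Kb} but defers to \cite[Theorem~1.34]{cca}, and the proof there is exactly this identification of the degree-$\bb$ strand of $K_\bullet \otimes_S I$ (Koszul resolution of $\kk$ tensored with~$I$) with the reduced chain complex of~$K^\bb I$, including the same index shift $|\tau| = \dim\tau + 1$ and the empty-face bookkeeping at $\tor_0$.
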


For an exposition and proof, see \cite[Theorem~1.34]{cca} and
surrounding material.

Theorem~\ref{t:Kb} is the sense in which simplicial homology
categorifies monomial Betti numbers.  At issue in Kaplansky's problem
is how to categorify the differentials in a minimal free resolution
of~$I$.  More precisely, any attempt to produce general minimal free
resolutions of arbitrary monomial ideals must reduce---explicitly or
implicitly---to solving the following concrete problem.

\begin{prob}\label{prob:kaplansky}
For a monomial ideal $I \subseteq \kk[\xx]$, produce vector space
homomorphisms
\vspace{-.5ex}
\begin{align*}
  \bigoplus_{\aa \prec \bb}\HH_{i-1} (K^\aa I;\kk)
  &\from
  \HH_i(K^\bb I;\kk)
\vspace{-.6ex}
\end{align*}
for all $i \!\in\! \NN$ and multigraded degrees $\bb \!\in\! \NN^n$
whose induced\/ $\kk[\xx]$-module \mbox{homomorphisms}%
\vspace{-.5ex}
\begin{align*}
  \bigoplus_{\aa\in\NN^n}\HH_{i-1}(K^\aa I;\kk)\otimes_\kk\kk[\xx](-\aa)
  &\from
  \bigoplus_{\bb\in\NN^n}\HH_i(K^\bb I;\kk)\otimes_\kk\kk[\xx](-\bb)
\vspace{-.6ex}
\end{align*}
constitute a free resolution of~$I$.
\end{prob}

Such a free resolution would automatically be minimal, by the Betti
number computation in Theorem~\ref{t:Kb}.  To connect the vector space
homomorphisms to the induced module homomorphisms in more detail,
first see the right-hand side of the vector space homomorphism as
$\tor_{i+1}(\kk,I)_\bb$.  Thinking of it as (the $\kk$-linear span of)
a basis for the $(i+1)^\mathrm{st}$~syzygies in degree~$\bb$, the
differential in a minimal free resolution preserves the degree~$\bb$
while taking this basis to homological stage~$i$.  The summands in
homological stage $i$ that contribute nonzero components to
degree~$\bb$ have the natural form \mbox{$\tor_i(\kk,I)_\aa
\otimes_\kk \kk[\xx]$} for some $\aa \prec \bb$.  The degree~$\bb$
component of this free summand~is
$$%
  \bigl(\tor_i(\kk,I)_\aa \otimes_\kk \kk[\xx]\bigr){}_\bb
  =
  \xx^{\bb-\aa}\tor_i(\kk,I)_\aa
  =
  \HH_{i-1}(K^\aa I;\kk)
$$
as an ungraded vector space.  (To keep track of the grading, the
left-hand side vector space here would have to be shifted into
multigraded degree~$\bb$.)  Taking the direct sum over $\aa \prec \bb$
yields the left-hand side of the vector space homomorphism in
Problem~\ref{prob:kaplansky}.

\subsection{Sylvan combinatorics of the canonical differential}\label{sub:combin}
\mbox{}\enlargethispage*{2ex}

\noindent
Given a free resolution whose syzygy modules have specified bases, the
differentials can be expressed by matrices of scalars using monomial
matrices \cite[Section~3]{alexdual}.  However, one of the fundamental
obstacles to overcome in expressing an explicit, closed-form
description of a minimal free resolution of an arbitrary monomial
ideal is how to present a homomorphism canonically between homology
vector spaces, which do not possess natural bases.  Our approach is to
specify a linear map \smash{$\wt C_{i-1} K^\aa I \from \wt C_i K^\bb
I$} from $i$-chains to $(i-1)$-chains using their natural bases but
then ensure that this linear map induces a well defined homology
homomorphism \smash{$\HH_{i-1} K^\aa I \from \HH_i K^\bb I$}.  (The
field~$\kk$ is fixed throughout and suppressed from the notation.)
Thus, given any cycle of dimension~$i$, expressed in the basis of
$i$-simplices in~$K^\bb I$, the closed-form description acts on each
term in the cycle to produce a cycle expressed in the basis of
\mbox{$(i-1)$-simplices} in~$K^\aa I$.  Different input cycles can
yield different output cycles, a~priori, even when they represent the
same homology class, as long as homologous input cycles yield
homologous output cycles.  That said, our formulation takes homologous
cycles to the same cycle, inducing a homomorphism \smash{$\wt Z_{i-1}
K^\aa I \from \HH_i K^\bb I$}.  This is part of the central~result,
Definition~\ref{d:sylvan-matrix} and Theorem~\ref{t:sylvan}: the
closed-form specification of the differential in our \emph{canonical
sylvan resolution} in characteristic~$0$ and most positive
characteristics.

In more detail, Definition~\ref{d:sylvan-matrix} and
Theorem~\ref{t:sylvan} formulate an explicit linear map
$$%
  \smash{\wt C_{i-1} K^\aa I \stackrel{\ D}\ffrom \wt C_i K^\bb I}.
$$
Specifying~$D$ is the same as specifying its entries~$D_{\sigma\tau}$
for $\tau \in \wt C_i K^\bb I$ and $\sigma \in \wt C_{i-1} K^\aa I$.
(See Convention~\ref{conv:block-matrix} for the resulting matrix
notation.)  The combinatorics is matroidal, generalizing that of
spanning trees in graphs, applied to the upper Koszul simplicial
complexes of~$I$ at lattice points in~$\ZZ^n$.  The entries
$D_{\sigma\tau}$ are expressed as weighted sums over all saturated
decreasing lattice paths from $\bb$ to~$\aa$, where the weights come
from
\begin{itemize}
\item%
coefficients of faces in unique circuits or boundaries obtained by
throwing one additional facet into a higher-dimensional analogue of a
spanning tree, and
\item%
determinants of submatrices indexed by the appropriate rows and
columns.
\end{itemize}
The determinants are unavoidable in high dimension.  They reflect the
fact that~\mbox{integer} boundaries of individual faces of can
contribute to bases for sublattices of varying index.

\subsection{Methods}\label{sub:methods}
\mbox{}

\noindent
The apparent obstruction to constructing closed-form minimal
resolutions has been how to appropriately relate the various Koszul
simplicial homology groups that categorify the multigraded Betti
numbers.  Canonical homomorphisms among subquotients of the relevant
homology groups constitute the spectral sequence
(Corollary~\ref{c:KK-spectral-seq}) of an appropriately constructed
Koszul bicomplex (Definition~\ref{d:KK}), but alas, there is no
categorically natural way to lift these homomorphisms on subquotients
to homomorphisms on the intact homology.  The method of Wall complexes
here, following Eagon \cite{eagon1990} (see Section~\ref{s:wall} for
an exposition), observes that any choice of splitting for the vertical
differential forces the Koszul simplicial homology to split in such a
way that the spectral sequence differentials collate into a compendium
differential that solves Problem~\ref{prob:kaplansky}.

The splittings, and subsequently the Wall differentials, are made
canonical by using Moore--Penrose pseudoinverss, which explains the
exclusion of finitely many positive characteristics.  The splittings
are then made combinatorially explicit by applying summation formulas
for the pseudoinverse \cite[Theorem~1]{berg1986} and
\cite[Theorem~2.1]{benTal-teboulle1990}, which are so rarely cited
that they must be largely unknown to algebraists.  We rephrase these
formulas as weighted averages of splittings (Corollary~\ref{c:hedge}
and Proposition~\ref{p:hedge}) in the context of the theory of
higher-dimensional analogues of spanning trees initiated by Kalai
\cite{kalai1983}, as developed by Duval, Klivans, and Martin
\cite{duval-klivans-martin2009, duval-klivans-martin2011}, Petersson
\cite{petersson2009}, and Lyons \cite{Lyons2009}.  Versions of
Corollaries~\ref{c:hedge}
and~\ref{c:projection-hedge} in which the determinants are interpreted
as orders of certain torsion subgroups in homology were proved by
Catanzaro, Chernyak, and Klein
\cite{catanzaro-chernyak-klein2015,catanzaro-chernyak-klein2017}.

\subsection{Noncanonical sylvan resolutions}\label{sub:noncanonical}
\mbox{}

\noindent
Other combinatorial splittings of the vertical Koszul differential,
arising from individual hedgerows
(Definition~\ref{d:hedgerow} with Definition~\ref{d:hedge} and
Example~\ref{e:CW}) contributing summands to the Moore--Penrose
pseudoinverse formula, produce perfectly good combinatorial minimal
resolutions.  These could be suited to algorithmic computation
(Remark~\hspace{-.2ex}\ref{r:computation}).  Moreover, these
\mbox{splittings---and} hence the corresponding sylvan minimal
resolutions---require no division and hence are defined over any field
(Corollary~\ref{c:noncanonical-sylvan}).  Certain existing families of
minimal resolutions appear to be sylvan, with apt choices of hedges;
that is, they can be constructed as Wall complexes for
\mbox{suitable}~\mbox{splittings}~(\mbox{Remark}~\ref{r:eliahou-kervaire}).

\subsection{Logical structure}\label{sub:logical}
\mbox{}

\noindent
To make the prerequisites clear, the paper begins with a short, direct
path to a rigorous statement of the main result---the combinatorial
description of canonical minimal free resolutions of monomial ideals
in Theorem~\ref{t:sylvan}.  Thus Sections~\ref{s:hedge}
and~\ref{s:sylvan} are self-contained introductions to the relevant
simplicial notions and the combinatorial assembly of these along
descending lattice paths.  The proof of Theorem~\ref{t:sylvan} must
wait until Section~\ref{s:resolutions}, as it relies on the Hedge
Formula (Corollary~\ref{c:hedge}), the Wall construction of minimal
free resolutions via Koszul splittings
(Corollary~\ref{c:wall-koszul}), and the Koszul simplicial formula for
those (Theorem~\ref{t:choice-of-splitting}).  No intervening result
relies on the statement of Theorem~\ref{t:sylvan}.

\subsection{Conventions}\label{sub:conv}
\mbox{}\vspace{-.5ex}

\begin{conv}[Cellular notions]\label{conv:simplicial}
A CW complex~$K$ has its set $K_i$ of $i$-faces and integer reduced
chain groups $\wt C_i^\ZZ K = \ZZ\{K_i\}$ with differential~$\del_i:
\wt C_i^\ZZ K \to \wt C_{i-1}^\ZZ K$.  Tensoring with any field~$\kk$,
such as the fields~$\QQ$ or~$\CC$ of rational or complex numbers,
yields the reduced chain complex $\wt C_\spot^\kk K = \kk \otimes_\ZZ
\wt C_\spot K = \kk\{K_i\}$ over~$\kk$, with differential also
denoted~by~$\del$.  The same conventions hold for cycles $\wt Z_i K =
\ker\del_i \subseteq\nolinebreak \wt C_i K$,
\mbox{boundaries}~\mbox{$\wt B_i K = \image\del_{i+1} \subseteq \wt
C_i K$}, and reduced homology $\HH_i K =\nolinebreak \wt Z_i K / \wt
B_i K$.  To unclutter the notation, it helps to omit the superscript
$\ZZ$ or~$\kk$ when the context is clear.  The sign on a facet
$\sigma$ of a cell~$\tau$ in the boundary~$\del\tau$ is written
$(-1)^{\sigma \subset \tau}$.
\end{conv}

\begin{conv}[Polynomial and monomial notions]\label{conv:monomial}
Fix, once and for all, an ideal~$I$ in the polynomial ring $\kk[\xx]$
in $n$ variables $\xx = x_1,\dots,x_n$ over a field~$\kk$ that is
assumed throughout to be arbitrary unless otherwise stated.  Assume
that $I$ is a monomial ideal unless otherwise explicitly stated.
Monomials in~$\kk[\xx]$ are denoted by $\xx^\aa$ for lattice points
$\aa \in \NN^n$.  Unadorned tensor products $\otimes$ are understood
as~$\otimes_\kk$.
\end{conv}

\section{Shrubs, stakes, and hedges}\label{s:hedge}

\begin{defn}\label{d:hedge}
Fix nonegative integers $m,n \in \NN$.
\begin{enumerate}\itemsep=.3ex
\item\label{i:shrubbery'}%
A \emph{shrubbery} for a surjection $B \otno \kk^n$ is a subset $T
\subseteq \{e_1,\dots,e_n\}$ of the standard basis such that the
composite $B \otno \kk^n \otni \kk\{T\}$ is an isomorphism~$\del_T$.

\item\label{i:stake'}%
A \emph{stake set} for an injection $\kk^m \otni B$ is a subset $S
\subseteq \{e_1,\dots,e_m\}$ of the standard basis such that the
composite $\kk\{S\} \otno \kk^m \otni B$ is an isomorphism~$\del_S$.
Basis vectors in a stake set are called \emph{stakes}.

\item%
If \smash{$\kk^m \stackrel\del\ffrom \kk^n$} is a linear map with
image $B = \del(\kk^n)$, then a \emph{hedge} for $\del$ is a
choice~$\st$ of a \emph{shrubbery~$T$ for~$\del$} as in
item~\ref{i:shrubbery'} and a \emph{stake set~$S$ for~$\del$}
as~in~item~\ref{i:stake'}.%
\end{enumerate}\enlargethispage*{2.5ex}
\end{defn}

\begin{example}\label{e:CW}
Fix a CW complex~$K$ and a field~$\kk$.
\begin{enumerate}\itemsep=.3ex
\item\label{i:shrubbery}%
A \emph{shrubbery} in dimension~$i$ is a shrubbery $T_i \subseteq K_i$
for $\wt B_{i-1}^\kk K \otno \wt C_i^\kk K$.

\item\label{i:stake}%
A \emph{stake set} in dimension~$i-1$ is a stake set for
\smash{$\wt C_{i-1}^\kk K \otni \wt B_{i-1}^\kk K$}.

\item\label{i:hedge}%
A \emph{hedge} in~$K$ of dimension~$i$ is a choice of shrubbery
in~$K_i$ and stake set in~$K_{i-1}$.
\end{enumerate}
A hedge of dimension~$i$ may be expressed as $\st_i = (S_{i-1},T_i)$.
\end{example}

\begin{remark}\label{r:spanning-tree}
In the literature shrubberies are often known as ``spanning trees'',
or ``spanning forests'', or some variant; see
\cite{catanzaro-chernyak-klein2015} and
\cite{duval-klivans-martin2015}, for example.  We avoid these terms
because they are inapt in certain ways---subcomplexes whose facets
form shrubberies need not be connected in any appropriate sense (so
should not be called trees) if the ambient CW complex is disconnected,
and there could be forests that span in some appropriate sense
but are nonetheless not spanning forests---and their precise
definitions vary from paper to paper.  But they explain our botanical
terminology as well as our shrubbery symbol ``$\,T\,$'', which
classically stands for~``tree''.  Regardless of terminology or
notation, a hedge is matroidal information, given by subsets of fixed
bases, and is hence combinatorial in nature.
\end{remark}

\begin{remark}\label{r:coboundary}
Definitions~\ref{d:hedge}.\ref{i:shrubbery'} and~\ref{i:stake'} are
plainly dual: $S \subseteq \kk^m$ is a stake set for~$\del$ if and
only if the corresponding dual basis vectors are a shrubbery for the
transpose~$\del^\top$.  For this reason, stake sets have been called
``cotrees'' in persistent homology \cite{edelsbrunner-olsbock2018}.
\end{remark}

The horticultural picture that goes with the terminology extends: each
stake is tied to the tip of a unique~shrub: the chain~$s$ in the next
result (see Definition~\ref{d:chain-linked}.\ref{i:chain}).

\begin{lemma}\label{l:chain-linked}
Fix a hedge $\st$ for $\kk^m \stackrel\del\ffrom \kk^n$ and a stake
$\sigma \in S$.  There is a unique chain $s \in \kk\{T\}$ whose
boundary has coefficient~$1$~on~$\sigma$ and~$0$ on all other
stakes~in~$S$.
\end{lemma}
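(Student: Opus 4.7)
The plan is to identify the condition ``$\del(s)$ has coefficient~$1$ on~$\sigma$ and $0$ on all other stakes in~$S$'' with the equation $\del_S(\del(s)) = \sigma$ in $\kk\{S\}$, and then observe that the composite $\del_S \circ \del_T : \kk\{T\} \to \kk\{S\}$ is an isomorphism, forcing $s$ to exist uniquely as the preimage of~$\sigma$.

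More concretely, first I would recall the hedge data: by Definition~\ref{d:hedge}.\ref{i:shrubbery'} the restriction of $\del$ to $\kk\{T\} \subseteq \kk^n$ lands in $B = \del(\kk^n)$ and defines an isomorphism $\del_T : \kk\{T\} \xrightarrow{\sim} B$, and by Definition~\ref{d:hedge}.\ref{i:stake'} the projection $\pi_S : \kk^m \twoheadrightarrow \kk\{S\}$ that forgets all coordinates outside~$S$, when restricted to~$B \subseteq \kk^m$, is the isomorphism $\del_S : B \xrightarrow{\sim} \kk\{S\}$. Thus, for any $s \in \kk\{T\}$, the boundary $\del(s)$ lies in~$B$, and its coefficients on the basis elements in~$S$ are precisely the coordinates of $\pi_S(\del(s)) = \del_S(\del_T(s)) \in \kk\{S\}$. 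The desired condition ``coefficient~$1$ on~$\sigma$ and $0$ on stakes in $S \setminus \{\sigma\}$'' is therefore equivalent to the single equation
\[
  (\del_S \circ \del_T)(s) = \sigma \quad \text{in } \kk\{S\}.
\]

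Second, since $\del_T$ and $\del_S$ are each isomorphisms by hypothesis, the composite $\del_S \circ \del_T : \kk\{T\} \to \kk\{S\}$ is an isomorphism of finite-dimensional $\kk$-vector spaces. Taking $s = (\del_S \circ \del_T)^{-1}(\sigma)$ produces the unique chain in $\kk\{T\}$ satisfying the prescribed boundary condition.

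There is no real obstacle here: the lemma is essentially a restatement of the two isomorphism properties built into the definition of a hedge, once one recognizes that the condition imposed on $\del(s)$ only concerns the $\kk\{S\}$-component (coefficients outside~$S$ are left unconstrained), which is exactly what $\del_S$ measures. The only point that warrants a sentence of care is the observation that~$\del(s)$ automatically lies in~$B$ for $s \in \kk\{T\}$, so that applying $\del_S$ makes sense.
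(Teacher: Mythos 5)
Your proof is correct and is precisely the argument the paper gives: the paper's one-line proof sets $s = (\del_S \circ \del_T)^{-1}\sigma$ and notes $\del_S \circ \del_T\,s = \sigma$, which is exactly your identification of the coefficient condition with the equation $(\del_S \circ \del_T)(s) = \sigma$ followed by inversion of the composite isomorphism. Your version simply spells out the intermediate steps (that $\del(s) \in B$ for $s \in \kk\{T\}$, and that $\del_S$ reads off the $S$-coordinates) that the paper leaves implicit.
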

\begin{proof}
In the notation of Definition~\ref{d:hedge}, $s = (\del_S \circ
\del_T)^{-1}\sigma$, so that $\del_S \circ \del_T s = \sigma$.
\end{proof}

The usual property of a spanning tree in a graph is that every edge of
the graph closes a unique circuit with the tree edges.  The analogous
well known combinatorics of shrubberies is most simply described by
way of a trivial lemma.

\begin{lemma}\label{l:alpha}
Fix a field~$\kk$ and a vector subspace $A \subseteq \kk^\ell$.  For
each subset $U \subseteq \{e_1,\dots,e_\ell\}$ of the standard basis
such that $\kk^\ell = \kk\{U\} \oplus A$ there is a linear map
$\alpha_U:\nolinebreak \kk^\ell \onto A$ that takes $\rho \in
\kk^\ell$ to the unique vector $\rho - r \in A$ such that $r \in
\kk\{U\}$.\qed
\end{lemma}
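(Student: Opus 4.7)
The plan is to recognize this as the standard projection onto $A$ along the complementary subspace $\kk\{U\}$, so the work is essentially unpacking what a direct sum decomposition provides.

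First I would invoke the hypothesis $\kk^\ell = \kk\{U\} \oplus A$ to observe that every vector $\rho \in \kk^\ell$ admits a unique expression $\rho = r + a$ with $r \in \kk\{U\}$ and $a \in A$. This is precisely the defining property of an internal direct sum decomposition and requires no further argument. Setting $a = \rho - r$ then realizes $\rho - r$ as the unique element of $A$ of the form $\rho - r'$ for some $r' \in \kk\{U\}$, since any other choice $r'$ would give a second decomposition $\rho = r' + (\rho - r')$ contradicting uniqueness.

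Next I would define $\alpha_U(\rho) = \rho - r$ and check linearity. If $\rho_1 = r_1 + a_1$ and $\rho_2 = r_2 + a_2$ are the unique decompositions, then $c_1 \rho_1 + c_2 \rho_2 = (c_1 r_1 + c_2 r_2) + (c_1 a_1 + c_2 a_2)$ is a decomposition of $c_1 \rho_1 + c_2 \rho_2$ with the first summand in $\kk\{U\}$ and the second in $A$. Uniqueness forces this to be \emph{the} decomposition, so $\alpha_U(c_1 \rho_1 + c_2 \rho_2) = c_1 a_1 + c_2 a_2 = c_1 \alpha_U(\rho_1) + c_2 \alpha_U(\rho_2)$.

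Finally, surjectivity onto $A$ is immediate: for any $a \in A$ the decomposition $a = 0 + a$ shows $\alpha_U(a) = a$, so $\alpha_U$ restricts to the identity on $A$ and in particular hits every element. Since the proof consists entirely of parsing the direct sum decomposition, there is no real obstacle; the lemma is flagged as ``trivial'' precisely because its content is just the assertion that projecting along a complementary subspace is a well-defined linear surjection.
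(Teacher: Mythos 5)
Your proof is correct and is exactly the standard argument the paper has in mind; the lemma is stated with a terminal \qed and no written proof precisely because it reduces to unpacking the direct sum decomposition $\kk^\ell = \kk\{U\} \oplus A$, which is what you do. Nothing to add.
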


\begin{example}\label{e:unique-vector}
Lemma~\ref{l:alpha} is applied particularly when
\begin{itemize}
\item%
$U = T$ is a shrubbery for the surjection $\kk^\ell/A \otno \kk^\ell$,
in which case~$\alpha_U$ is called the \emph{circuit projection} and
denoted~$\zeta_T$, or
\item%
$U = \oS$ is the complement of a stake set~$S$ for the injection
$\kk^\ell \otni A$, in which case $\alpha_U$ is called the
\emph{boundary projection} and denoted~$\beta_S$.
\end{itemize}
If $T$ and~$S$ come from \smash{$\kk^m \stackrel\del\ffrom \kk^n$},
then $\zeta_T: \kk^n \onto \ker\del$ and $\beta_S: \kk^m \onto B =
\del(\kk^n)$.

In cellular settings, when $T_i \subseteq K_i$ is a shrubbery of
dimension~$i$, the circuit projection \smash{$\zeta_{T_i}: \wt C_i^\kk
K \onto \wt Z_i^\kk K$} has a combinatorial interpretation: every
$i$-face $\tau \not\in T_i$ lies in a unique \emph{$T_i$-circuit}
\smash{$\zeta_{T_i}(\tau) = \tau - t \in \wt Z_i^\kk K$} that is a
cycle with coefficient~$1$ on $\tau$ in the CW complex with facets
$\{\tau\} \cup T_i$.  (If $\tau \in T_i$ then $t = \tau$, so $\tau - t
= 0$.)

Similarly, when $S_i \subseteq K_i$ is a stake set of dimension~$i$,
the boundary projection \smash{$\beta_{S_i}: \wt C_i^\kk K \!\onto\!
\wt B_i^\kk K$} takes every stake to the unique boundary with
coefficient~$1$ on~$\sigma$.  The combinatorial interpretation in
homological stage $i-1$ views the boundary $\beta_{S_{i-1}}(\sigma)
\in \sigma + \kk\{\ol S_{i-1}\}$ as the \emph{hedge~rim} of~$\sigma$:
the boundary of the shrub~$s$ from Lemma~\ref{l:chain-linked} for any
choice of shrubbery~$T_i$; see
Definition~\ref{d:chain-linked}.\ref{i:boundary} and Lemma~\ref{l:bS}.
\end{example}

Circuits, shrubs, and hedge rims are the core simplicial combinatorial
players.

\begin{defn}\label{d:chain-linked}
Fix a CW complex~$K$ and a field~$\kk$.
\begin{enumerate}
\item\label{i:cycle}%
Fix a shrubbery $T_{i-1}$.  An $(i-1)$-face $\sigma$ is
\emph{cycle-linked} to any $(i-1)$-face~$\sigma' \in\nolinebreak
K_{i-1}$ with nonzero coefficient in the circuit
$\zeta_{T_{i-1}}(\sigma)$ defined in Example~\ref{e:unique-vector}.

\item\label{i:chain}%
Fix a hedge $\st_i$ in~$K$.  A stake $\sigma \in S_{i-1}$ is
\emph{chain-linked} to an $i$-face~$\tau \in K_i$ if $\tau$ has
nonzero coefficient in the \emph{shrub} of~$\sigma$: the chain $s$
from~Lemma~\ref{l:chain-linked}.

\item\label{i:boundary}%
Fix a stake set~$S_i$.  An $i$-face $\rho \in K_i$ is
\emph{boundary-linked} to $\rho' \in K_i$ if $\rho'$ has nonzero
coefficient in the \emph{hedge rim} of~$\rho$: the chain $r(\rho) =
\rho - \beta_{S_i}(\rho)$ from~Example~\ref{e:unique-vector}.
\end{enumerate}
Write $c_\sigma(\sigma',T_{i-1})$ and $c_\sigma(\tau,\st_i)$ and
$c_\rho(\rho',S_i)$ for the coefficients on~$\sigma'$ and~$\tau$
and~$\rho'$ in the circuit, shrub, and hedge rim of an
$(i-1)$-face~$\sigma$, an $(i-1)$-stake~$\sigma$, and
an~$i$-face~$\rho$.
\end{defn}

\begin{example}\label{e:hedge}
In the following simplicial complex, choose the hedge $\st_2 =
(S_1,T_2) = (\{bc,cd\},\{abc,bcd\})$ of dimension~$2$.  The shrub and
hedge rim of the stake~$cd$~are
$$%
\begin{array}{c}
\begin{tikzpicture}[scale=1]
\draw[ultra thick,fill=black!30!green] (0,5) -- (1,4) -- (-1,4) -- (0,5);
\draw[ultra thick,fill=black!30!green] (-1,4) -- (-0.5,2.5) -- (1,4) -- (-1,4);
\draw[ultra thick] (-0.5,2.5) -- (1,2.5) -- (1,4) -- (-0.5,2.5);
\draw[line width=5,brown] (-1,4) -- (1,4);
\draw[line width=5,brown] (1,4) -- (-0.5,2.5);
\filldraw (0,5) circle (2pt);
\node[above] at (0,5) {$a$};
\filldraw (-1,4) circle (2pt);
\node[left] at (-1,4) {$b$};
\filldraw (1,4) circle (2pt);
\node [right] at (1,4) {$c$};
\filldraw (-0.5,2.5) circle (2pt);
\node[left] at (-0.5,2.5) {$d$};
\filldraw (1,2.5) circle (2pt);
\node[right] at (1,2.5) {$e$};
\node[right] at (2,4) {$s(cd) = -abc + bcd$};
\node[right] at (2,3) {$r(cd) = ab - ac + bd$.};
\end{tikzpicture}
\end{array}
$$
\end{example}

\begin{remark}\label{r:boundary}
Definition~\ref{d:chain-linked}.\ref{i:chain} allows $\tau \in K_i$,
but in fact the shrub~$s$ in Lemma~\ref{l:chain-linked} only has
nonzero coefficients on faces in~$T_i$, so a stake $\sigma \in
S_{i-1}$ can only be chain-linked to a face~$\tau \in T_i$.
Similarly, Definition~\ref{d:chain-linked}.\ref{i:boundary} allows
$\rho' \in K_i$, but in fact the hedge rim $r(\rho)$ only has nonzero
coefficients on non-stakes, so a face~$\rho$ can only be
boundary-linked to a non-stake $\rho' \in \ol S_i$.  Note,
furthermore, that if the input is already a non-stake $\rho \in \ol
S_i$, then $r(\rho) = \rho$, so only $\rho' = \rho$ is possible, and
$c_\rho(\rho,S_i) = 1$.
\end{remark}

\begin{defn}\label{d:dets}
An \emph{integral structure} on one of the maps in
Definition~\ref{d:hedge} is a free abelian group~$B^\ZZ$ and a
homomorphism $B^\ZZ \!\to\! B$ inducing an isomorphism~\mbox{$B^\ZZ
\otimes \kk \simto\! B$}.  The \emph{square determinants}
$\det(\del_T)^2$ and~$\det(\del_S)^2$ in the presence of integral
structures are the (images in~$\kk$ of the) squares of the
determinants of $B^\ZZ \from \ZZ\{T\}$ and $\ZZ\{S\} \from B^\ZZ$.
\end{defn}

\begin{remark}\label{r:dets}
The square of the determinant of a homomorphism $B^\ZZ \from \ZZ\{T\}$
or $\ZZ\{S\} \from B^\ZZ$ can be calculated using any basis
of~$B^\ZZ$: choosing another basis of~$B^\ZZ$ yields the same
determinant up to a sign that becomes irrelevant upon squaring.
\end{remark}

For CW complexes, hedges and shrubberies yield bases consisting of
faces and boundaries of faces, but only over fields.  Over the
integers, the subgroups generated by these $\ZZ$-linearly independent
sets need not be saturated.  The square determinants measuring the
deviations act as weights in combinatorial formulas for projections
and splittings.

\begin{defn}\label{d:Delta}
Fix a CW complex~$K$.  Define the following integer sums over
shrubberies, stake sets, and hedges using the natural integral
structure~\smash{$\wt B_{i-1}^\ZZ K$} on~$\del_i$:
$$%
  \Delta_i^T K = \sum_{T_i} \det(\del_{T_i})^2,
  \quad
  \Delta_{i-1}^S K = \sum_{S_{i-1}} \det(\del_{S_{i-1}})^2,
  \quad\text{and}\quad
  \Delta_i^\st K = (\Delta_{i-1}^S K) (\Delta_i^T K).
$$
The symbol ``$K$'' may be omitted if the context is clear.  A
field~$\kk$ is \emph{torsionless} for~$K$ if for all~$i$ the
characteristic of~$\kk$ does not divide these numbers or the order of
the torsion subgroup of $\wt C_i^\ZZ K / \wt B_i^\ZZ K$.
\end{defn}

\begin{remark}\label{r:torsion}
The determinants in Definition~\ref{d:Delta} admit combinatorial
interpetations as orders of torsion subgroups of CW subcomplexes
of~$K$ whose facets are determined by the shrubberies and stake sets;
see \cite{catanzaro-chernyak-klein2015}, for example.
\end{remark}

\section{Canonical sylvan resolutions}\label{s:sylvan}

\noindent
The differential in the canonical sylvan resolution is based on
averaging over all hedge choices along lattice paths.  This section
develops notation for Koszul hedges along lattice paths.  Other than
the statement of the main theorem at the end (Theorem~\ref{t:sylvan}),
it contains no results or proofs---only definitions and notation.

\begin{defn}\label{d:hedgerow}
Fix a monomial ideal~$I$ and a saturated decreasing lattice
path~$\lambda$ from $\bb$ to~$\aa$, meaning that adjacent nodes in the
path differ by a standard basis vector of the integer lattice~$\ZZ^n$.
Write $\Lambda(\aa,\bb)$ for the set of such paths.  A \emph{hedgerow}
on~$\lambda$~is
\begin{itemize}
\item%
a stake set $S_i^\bb \subseteq K_i^\bb I$ at the upper endpoint~$\bb$,

\item%
a hedge $\st_i^\cc = (S_{i-1}^\cc, T_i^\cc)$ on the Koszul simplicial
complex~$K^\cc I$ for each interior (i.e., non-endpoint) vertex $\cc$
of~$\lambda$, and

\item%
a shrubbery $T_{i-1}^\aa \subseteq K_{i-1}^\aa I$ at the lower
endpoint~$\aa$.
\end{itemize}
It is convenient to write
$$%
  ST_i^{\,\lambda}
  = (S_{i-1}^\lambda, T_i^\lambda)
$$
for this hedgerow on~$\lambda$, where---to emphasize---this notation
carries an implicit stake set $S_i^\bb$ and shrubbery~$T_{i-1}^\aa$ at
the upper and lower endpoints.  It is also convenient~to~write
$$%
  \bb = \bb_0, \bb_1, \dots, \bb_{\ell-1}, \bb_\ell = \aa
$$
to denote the lattice points on the path $\lambda$, even if sometimes
$\cc$ is written for an otherwise unnamed vertex~$\bb_j$.  The path
$\lambda = (\lambda_1,\dots,\lambda_\ell)$ can be identified with
the~sequence
$$%
  \lambda_j = \bb_{j-1} - \bb_j \text{ for }j = 1,\dots,\ell
$$
of steps in the path, so $\xx^{\lambda_j} \in \{x_1,\dots,x_n\}$ is
one of the $n$~variables.
\end{defn}

\begin{defn}\label{d:DeltaLambda}
Definition~\ref{d:hedge}, Example~\ref{e:CW}, and
Definition~\ref{d:Delta} lead to notations
$$%
\begin{array}{r@{\ }c@{\ }l@{\qquad\quad}r@{\ }c@{\ }l}
  \delta_{i,\bb} &=& \det(\del_{S_i^\bb})
  &
  \Delta_i^{\!\bb} &=& \Delta_i^S K^\bb I,
\\\delta_{i,\bb_j} &=& \det(\del_{S_{i-1}^{\bb_j}})\det(\del_{T_i^{\bb_j}})
  &
  \Delta_i^{\!\bb_j} &=& \Delta_i^\st K^{\bb_j} I
  \qquad\text{ for } j = 1,\dots,\ell-1,
\\\delta_{i,\aa} &=& \det(\del_{T_{i-1}^\aa})
  &
  \Delta_{i-1}^{\!\aa} &=& \Delta_{i-1}^T K^\aa I,
  \qquad\text{and finally}
\\[.75ex]
  \delta_{i,\lambda} &=& \dis\prod_{j=0}^\ell \delta_{i,\bb_j}
  &
  \Delta_{i,\lambda} I &=& \dis\prod_{j=0}^\ell \Delta_i^{\!\bb_j}
\end{array}
$$
for the (images in the field~$\kk$ of the) integer invariants that
take into account hedges in the sequence of Koszul simplicial
complexes of~$I$ along the lattice path~$\lambda$.
\end{defn}

\begin{defn}\label{d:fence}
Fix a lattice path~$\lambda \in \Lambda(\aa,\bb)$.  A \emph{chain-link
fence}~$\phi$ from an $i$-simplex $\tau$ to an $(i-1)$-simplex
$\sigma$ along~$\lambda$ is a choice of hedgerow $ST_i^\lambda$ and a
sequence
$$%
\begin{array}{*{15}{@{}c@{}}}
\\[-3.2ex]
&&\!\!\tau_{\ell-1}\!\!\!&&&&\ \cdots\ &&&&\tau_1&&&&\tau_0\hbox{ --- }\tau\\
&/&&\bs&&/&&\bs&&/&&\bs&&/&\\
\sigma\hbox{ --- }\sigma_\ell&&&&\!\!\sigma_{\ell-1}\!\!\!&&&&\sigma_2&&&&\sigma_1&&
\\[-.2ex]
\end{array}
$$
in which $\tau_j \in
K_i^{\raisebox{.2ex}[6pt][0pt]{$\scriptstyle\bb_j$}} I$ and $\sigma_j
\in K_{i-1}^{\raisebox{.2ex}[6pt][0pt]{$\scriptstyle\bb_j$}} I$ and
\begin{enumerate}
\item[$\scriptstyle\tau_0\,${\footnotesize---}$\scriptstyle{\,\tau}$]%
the simplex
$\tau$ is boundary-linked to~$\tau_0$ via the stake set~$S_i^\bb$;

\item[$\scriptstyle\bs$\ \ \,\,\ ]%
the simplex $\sigma_j \in S_{i-1}^{\raisebox{.2ex}[6pt][0pt]{$\scriptstyle\bb_j$}}$
for $j = 1,\dots,\ell-1$ is a stake chain-linked to~$\tau_j$;

\item[$\scriptstyle/$\ \ \,\,\ ]%
the simplex $\sigma_j$ for $j = 1,\dots,\ell$ equals the facet
$\tau_{j-1} - \lambda_j$ of the simplex~$\tau_{j-1}$;

\item[$\scriptstyle\sigma\,${\footnotesize---}$\scriptstyle{\,\sigma_\ell}$]%
the simplex $\sigma_\ell \in K_{i-1}^\aa$ is cycle-linked to~$\sigma$.
\end{enumerate}
The $i$-simplex $\tau$ and $(i-1)$-simplex $\sigma$ are the
(\emph{initial} and \emph{terminal}) \emph{posts} of the fence~$\phi$.
The set $\Phi(\lambda)$ of chain-link fences along~$\lambda$ has the
subset $\Phi_{\sigma\tau}(\lambda)$ with posts $\tau$ and~$\sigma$.
\end{defn}

\begin{defn}\label{d:weight}
Each chain-link fence edge has a \emph{weight}:
\begin{itemize}\itemsep=1ex
\item%
the boundary-link $\tau_0$\,---\,$\tau$ has weight $\delta_{i,\bb}^2
c_\tau(\tau_0,S{}_i^{\,\bb})$,
\item%
the chain-link
\raisebox{2ex}[0pt][0pt]{$\tau_j\!$}\,\,\raisebox{.5ex}{\tiny$\diagdown$}\,%
\raisebox{-1ex}[0pt][0pt]{$\sigma_j$}
has weight $\delta_{i,\bb_j}^2
c_{\sigma_j}(\tau_j,\st_i{}^{\!\!\bb_j})$,
\item%
the containment\,
\raisebox{-1.25ex}[0pt][0pt]{$\,\sigma_j$}\raisebox{.25ex}{\tiny$\diagup$}\,%
\raisebox{1.25ex}[0pt][0pt]{$\tau_{j-1}$} 
has weight $(-1)^{\sigma_j \subset \tau_{j-1}}$, and
\item%
the cycle-link $\sigma$\,---\,$\sigma_\ell$ has weight
$\delta_{i,\aa}^2\,c_{\sigma_\ell}(\sigma,T_{i-1}^\aa)$.
\end{itemize}
The \emph{weight} of the chain-link fence $\phi$ is the
product~$w_\phi$ of the weights on its edges.
\end{defn}

\begin{remark}\label{r:subordinate}
The notion of chain-link fence~$\phi$ includes the ambient hedgerow
on~$\lambda$.  It is convenient to define~$\phi$ to be
\emph{subordinate} to the given hedgerow~$ST_i^\lambda$, which is
expressed as $\phi = (\sigma, \sigma_\ell, \tau_{\ell-1},
\sigma_{\ell-1}, \dots, \sigma_2, \tau_1, \sigma_1, \tau_0, \tau)
\vdash ST_i^\lambda$.
\end{remark}

\pagebreak

\begin{defn}\label{d:sylvan-matrix}
Fix a monomial ideal~$I$.  The \emph{canonical sylvan homomorphism}
$$%
  \wt C_{i-1} K^\aa I
  \ \stackrel{\ D\ =\ D^{\aa\bb}}\filleftmap\
  \wt C_i K^\bb I
$$
is given by its \emph{sylvan matrix}, whose entry $D_{\sigma\tau}$ for
$\tau \in K_i^\bb I$ and $\sigma \in K_{i-1}^\aa I$ is the sum of the
weights of all chain-link fences from~$\tau$ to~$\sigma$ along all
lattice paths from~$\bb$~to~$\aa$:
$$%
  D_{\sigma\tau}
  =
  \sum_{\lambda \in \Lambda(\aa,\bb)}
  \frac{1}{\Delta_{i,\lambda} I}
  \sum_{\phi \in \Phi_{\sigma\tau}(\lambda)}
  w_\phi.
$$
\end{defn}

\setcounter{sylvan-sec}{\value{section}}
\setcounter{sylvan}{\value{thm}}
\begin{thm}\label{t:sylvan}
Fix a monomial ideal~$I$ and field~$\kk$ that is torsionless for all
Koszul simplicial complexes of~$I$.  The canonical sylvan homomorphism
for each comparable pair $\bb \succ \aa$ of lattice points induces a
homomorphism $\wt Z_{i-1} K^\aa I \from \wt Z_i K^\bb I$ that vanishes
on~\smash{$\wt B_i K^\bb I$}, and hence it induces a well defined
\emph{canonical sylvan homology morphism} $\HH_{i-1} K^\aa I \from
\HH_i K^\bb I$.  The induced homomorphisms
$$%
  \HH_{i-1} K^\aa I \otimes \kk[\xx] (-\aa)
  \,\from\,
  \HH_i K^\bb I \otimes \kk[\xx] (-\bb)
$$
of\hspace{.3ex} $\NN^n$-graded free $\kk[\xx]$-modules constitute a
minimal free resolution of~$I$.
\end{thm}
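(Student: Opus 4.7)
The plan is to realize the canonical sylvan homomorphism as the differential of a Wall complex for the Koszul bicomplex of $I$ equipped with the Moore--Penrose splitting of its vertical differential. By Corollary~\ref{c:wall-koszul}, any splitting of the vertical Koszul differential yields a minimal free resolution of $I$ through the Wall construction, so the task reduces to matching Definition~\ref{d:sylvan-matrix} with the Wall differential for this canonical choice of splitting. Over a torsionless field, the Moore--Penrose pseudoinverses of the integer differentials reduce well modulo the characteristic of $\kk$ and provide well defined canonical splittings at each lattice point; this is where the torsionless hypothesis enters.

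Second, I would use Theorem~\ref{t:choice-of-splitting} to express the Wall differential from bidegree $(\bb,i)$ to $(\aa,i-1)$ as a sum over saturated decreasing lattice paths $\lambda \in \Lambda(\aa,\bb)$, with each path contributing a composite that alternates horizontal Koszul differentials (signed facet maps corresponding to the steps $\lambda_j$) with splittings of the vertical differential at each interior lattice point, bookended by a splitting at $\bb$ and a cycle projection at $\aa$.

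Third, I would apply the Hedge Formula (Corollary~\ref{c:hedge}) together with Proposition~\ref{p:hedge} to rewrite each Moore--Penrose splitting as a weighted average, over all hedges at the corresponding lattice point, of the noncanonical splittings defined by individual hedges, with weights given by the square determinants of Definition~\ref{d:Delta}. The hedge-based splitting at $\bb$ produces the boundary-link $\tau_0$---$\tau$ of Definition~\ref{d:chain-linked}.\ref{i:boundary}; at each interior vertex $\bb_j$ it produces a shrub in $\kk\{T_i^{\bb_j}\}$ per Lemma~\ref{l:chain-linked}, whose expansion on the basis of $i$-simplices records exactly the chain-linked faces $\tau_j$ of Definition~\ref{d:chain-linked}.\ref{i:chain}; and the circuit projection at $\aa$ produces the cycle-link $\sigma$---$\sigma_\ell$ of Definition~\ref{d:chain-linked}.\ref{i:cycle}. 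Distributing these sums through the composite along $\lambda$ yields exactly the sum over chain-link fences subordinate to hedgerows on $\lambda$, with edge weights and normalizations matching Definitions~\ref{d:weight} and~\ref{d:sylvan-matrix}.

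The main obstacle is the combinatorial bookkeeping in this expansion: one must verify that the product of hedge-dependent square determinants arising from each local application of the Hedge Formula collates cleanly into the single normalization factor $\Delta_{i,\lambda} I$, and that the signs $(-1)^{\sigma_j \subset \tau_{j-1}}$ emerge from the horizontal Koszul differentials (which are signed facet inclusions) without interference from the splittings. Once this identification of the sylvan matrix with the Wall differential is established, the remaining conclusions of the theorem are immediate consequences of Corollary~\ref{c:wall-koszul}: the sylvan differential automatically sends cycles to cycles, vanishes on boundaries, induces the claimed homology morphism $\HH_{i-1} K^\aa I \from \HH_i K^\bb I$, and the aggregate homomorphism is the differential of a minimal $\NN^n$-graded free resolution of $I$, with minimality guaranteed by the Betti number computation in Theorem~\ref{t:Kb}.
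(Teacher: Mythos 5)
Your proposal is correct and matches the paper's proof strategy almost exactly: the paper likewise proves Theorem~\ref{t:choice-of-splitting} as the non-combinatorial bridge from Corollary~\ref{c:wall-koszul}, then specializes lattice-path by lattice-path, expanding each Moore--Penrose factor via the Hedge Formula and identifying the resulting coefficients with boundary-links, containment signs, chain-links, and cycle-links. One small refinement: at the two endpoints the paper does not expand the splitting $\del^+_\st$ itself but rather the complementary projections $\1 - \del_{i+1}\del^+_{i+1}$ at $\bb$ and $\1 - \del_i{}^{\!\aa+}\del_i^\aa$ at $\aa$ via the \emph{Projection} Hedge Formula (Corollary~\ref{c:projection-hedge} and Proposition~\ref{p:projection-hedge}); this is why the boundary-link and cycle-link weights involve $\1 - \beta_{S_i}$ and $\zeta_{T_{i-1}}$ rather than $\del^+_\st$ directly. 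The factorization $\det(\del_{S\times T})^2 = \det(\del_S)^2\det(\del_T)^2$ is what lets the square determinants at each lattice point collate into the product $\Delta_{i,\lambda}I$, resolving the bookkeeping concern you flag.
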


The proof is at the end of Section~\ref{s:resolutions}.


\section{Examples: sylvan block matrix notation}\label{s:examples}

\begin{example}\label{e:beta1=2}
One of the sticking points for any construction of canonical
minimal~free
\end{example}
\vspace{-2ex}
\begin{wrapfigure}{L}{0.4\textwidth}
  \vspace{-1ex}
  \psfrag{x}{$\!x$}
  \psfrag{y}{$y$}
  \psfrag{z}{$z$}
  \includegraphics[height=2in]{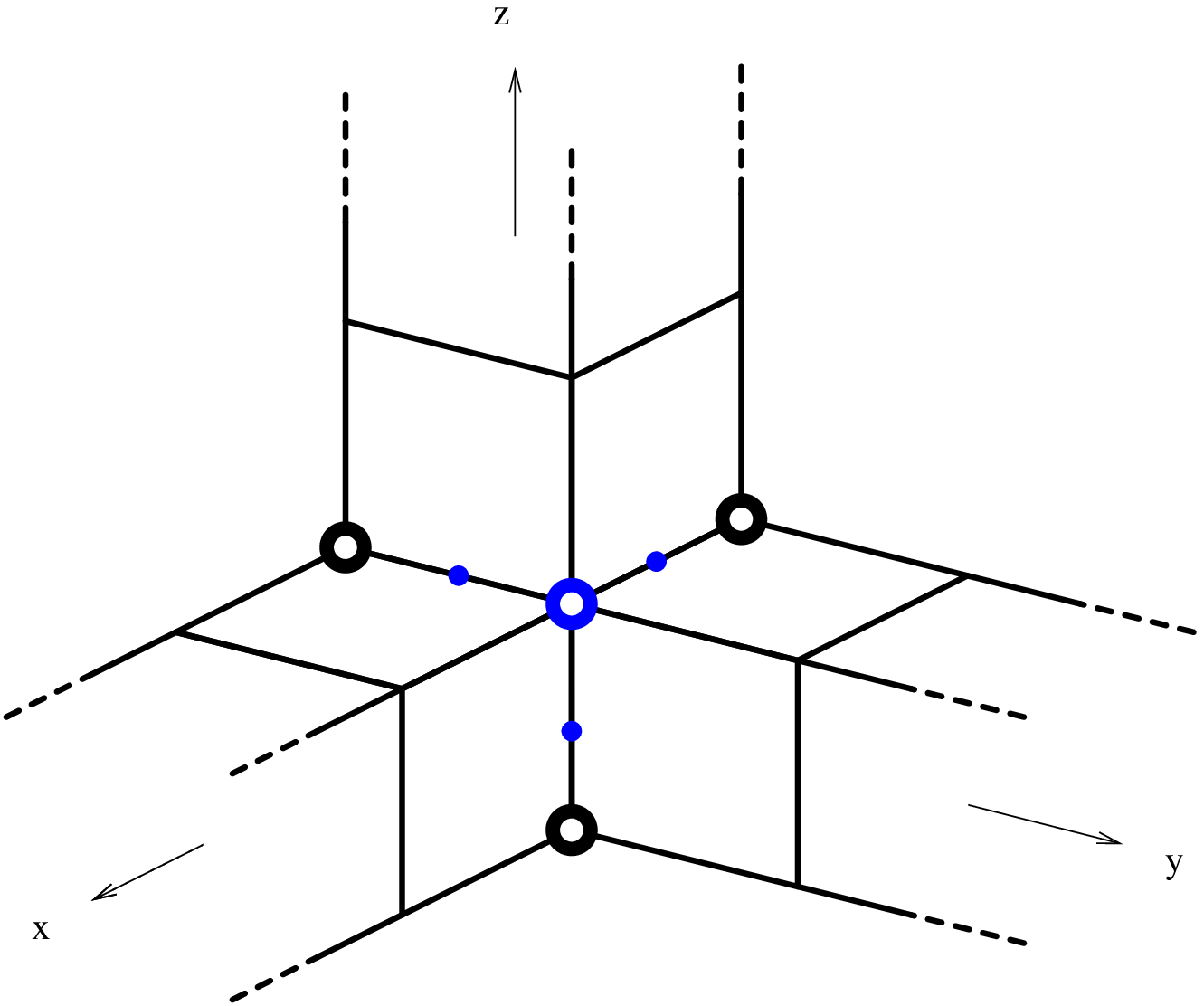}
  \vspace{-1ex}
\end{wrapfigure}
\noindent
resolutions of monomial ideals is $I = \<xy,yz,xz\>$, which has
Betti number $\betti 1{\blu{111}}I = 2$ arising from the Koszul
simplicial complex $K^{\blu{111}} I$ whose facets are the three
vertices
depicted here as little blue dots.  The two linearly independent
syzygies in degree~$\blu{111}$ split equally among the three
generators, which is problematic for any construction that produces
matrices for the differential, because any choice of basis breaks the
symmetry.  The sylvan method avoids this problem by defining the
homomorphism on chains instead of on homology classes.  Here is how it
works for $I = \<xy,yz,xz\>$.

Start by computing, say, the sylvan matrix~$D^{110,\blu{111}}$.  There
is only one lattice path $\lambda \in \Lambda(110,{\blu{111}})$, since
the length is~$1$.  For this path, the stake set at the initial
post~$\blu{111}$ is forced to be $\blu S_0^{111} = \nothing$ because
$\blu K^{111} I$ has dimension~$0$ and hence no faces of dimension~$1$
to take the boundary~of.  The shrubbery at the terminal post~$110$ is
forced to be the empty set of faces, which is written $T_{-1}^{110} =
\{\}$ so as not to confuse it with the set $\{\nothing\}$ consisting
of the empty face.  As there is only one hedgerow on~$\lambda$, and
all of the square determinants~$\delta^2$ equal~$1$ because the number
of vertices is too small for torsion in homology, $\Delta_{0,\lambda}
= 1$.  To construct a chain-link fence along~$\lambda$, only one
terminal post~$\sigma = \nothing$ is available.  Trying $\blu x$ as
initial post yields no fences: the boundary link {\blu $x$\,---\,$x$}
is forced, because $\blu x$ is the unique non-stake~$\blu \tau'$ in
$\blu K^{111}I$ such that $\blu x - \tau'$ is a boundary, but then $z
= \lambda_1$ is not a face of~$\blu x$, so the fence has no
continuation.
$$%
\begin{array}{*{3}{@{}c@{}}}
\\[-2.2ex]
   &&{\blu x \edgehoriz 1 x}
\\ &\edgeup 0&
\\\phantom{\nothing \edgehoriz 1 \nothing}
\\[-.2ex]
\end{array}
\qquad\qquad
\begin{array}{*{3}{@{}c@{}}}
\\[-2.2ex]
   &&{\blu y \edgehoriz 1 y}
\\ &\edgeup 0&
\\\phantom{\nothing \edgehoriz 1 \nothing}
\\[-.2ex]
\end{array}
\qquad\qquad
\begin{array}{*{3}{@{}c@{}}}
\\[-2.2ex]
   &&{\blu z \edgehoriz 1 z}
\\ &\edgeup 1&
\\\nothing \edgehoriz 1 \nothing
\\[-.2ex]
\end{array}
$$
Similarly, no fence has initial post~$\blu y$.  But the initial
post~$\blu z$ yields one fence, as depicted.  The conclusion is that
$\Phi_{\nothing,x} = \nothing = \Phi_{\nothing,y}$ and
$|\Phi_{\nothing,z}| = 1$, with all of the edge coefficients and
torsion numbers equal to~$1$.  This determines the top row of the
matrix\vspace{-1ex}
$$%
\begin{array}{c}
   \HH_{-1} K^{110} \otimes \<xy\>
\\ \oplus
\\ \HH_{-1} K^{101} \otimes \<xz\>
\\ \oplus
\\ \HH_{-1} K^{011} \otimes \<yz\>
\end{array}
\xleftarrow{
\monomialmatrix
  {\nothing\\\nothing\\\nothing}
  {\begin{array}{@{\,}c@{\,}ccc@{\,}c@{\,}}
    {}  &\blu x &\blu y &\blu z 
  \\{}[ & 0 & 0 & 1 & ]
  \\{}[ & 0 & 1 & 0 & ]
  \\{}[ & 1 & 0 & 0 & ]
  \end{array}}
  {\\\\\\}
}
{\blu \HH_0 K^{111} \otimes \<xyz\>}
$$
in which the other two rows are determined by symmetry.  The blocks in
this matrix are the sylvan matrices $D^{110,\blu 111}$, $D^{101,\blu
111}$, and $D^{011,\blu 111}$.  Its rows and columns are labeled by
the faces of the corresponding Koszul simplicial complexes.  The
cycles $\blu x - y$, $\blu z - x$, and $\blu y - z$ correspond to the
column vectors
$$%
{\blu\left[\begin{array}{c} 1\\-1\\ 0\end{array}\right]},
\qquad
{\blu\left[\begin{array}{c}-1\\ 0\\ 1\end{array}\right]},
\quad\text{and}\quad
{\blu\left[\begin{array}{c} 0\\ 1\\-1\end{array}\right]}.
$$
The image of, say, $\blu x - y$ is the $\NN^3$-degree $111$ element
$x \cdot \nothing - y \cdot \nothing$ that is $x$ times the free
generator of $\HH_{-1} K^{011} \otimes \<yz\>$ minus $y$ times the
free generator of $\HH_{-1} K^{101} \otimes \<xz\>$.  Looking back at
the staircase drawn at the beginning of this Example, the face~$\blu
x$ of the cycle $\blu x - y$ has moved back parallel to the $x$-axis
to become $x \cdot \nothing$, and $\blu -y$ has moved back parallel to
the $y$-axis to become $-y \cdot \nothing$.  This behavior is
fundamental to the sylvan construction: faces move back in directions
parallel to axes, picking up the corresponding monomial coefficients
as they go.

\begin{conv}\label{conv:block-matrix}
The general block matrix notational device illustrated by
Example~\ref{e:beta1=2} works as follows.  Choose an order in which to
list the $\NN^n$-degrees $\aa$ and~$\bb$ of nonzero Betti numbers
$\betti i\aa I$ and $\betti {i+1}\bb I$ in homological stages~$i$
and~$i+1$, say $\aa_1,\aa_2,\dots$ and $\bb_1,\bb_2,\ldots$.  The $pq$
block is the matrix $D^{\aa_p \bb_q}$, which takes chains in $\wt C_i
K^{\bb_q} I$, thought of as column vectors with entries indexed by the
$i$-simplices in $K_{i\!\!}{}^{\bb_q}{} I$, to chains in $\wt C_{i-1}
K^{\aa_p} I$, thought of as column vectors with entries indexed by the
$(i-1)$-simplices in $K_{i-1}^{\aa_p} I$.  The orderings on the
$\NN^n$-degrees $\aa_p$ and~$\bb_q$ are depicted by writing ordered
direct sums vertically.  The orderings on the simplices are depicted
by labeling the rows and columns of each~block.
\end{conv}

\begin{example}\label{e:full-sylvan-res}
It might be helpful to see a more complicated canonical sylvan
resolution, with multiple lattices paths from $\bb$ to~$\aa$ and
chain-link fences along a fixed~lattice
\end{example}\vspace{-2.25ex}
\begin{wrapfigure}{L}{0.5\textwidth}
  \vspace{-1.15ex}
  \psfrag{x}{$\!x$}
  \psfrag{y}{$y$}
  \psfrag{z}{$z$}
  \includegraphics[height=1.62in]{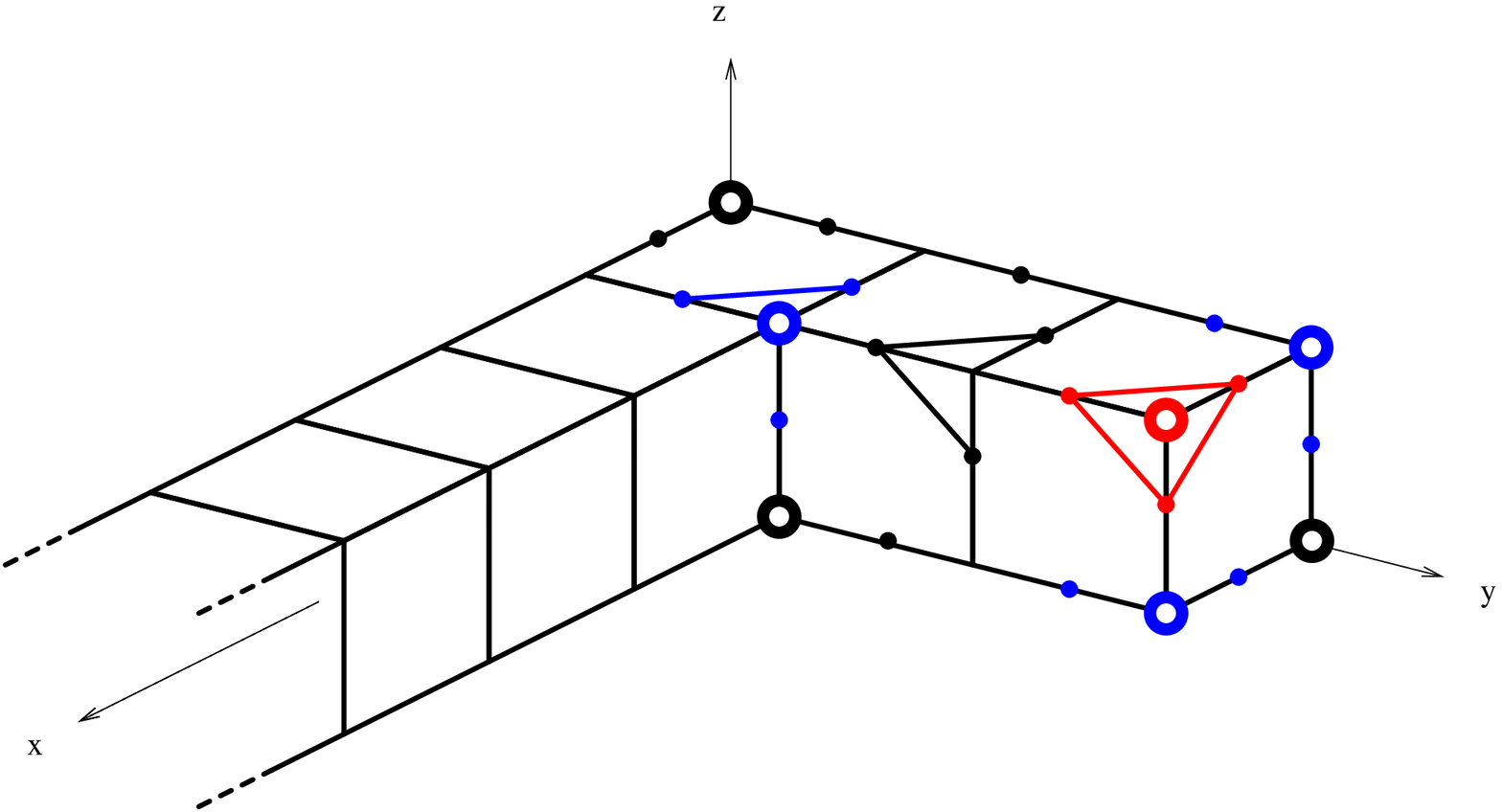}
  \vspace{-2ex}
\end{wrapfigure}
\noindent
path.  Alas, the square determinants are all~$1$ in this example,
since nontrivial such invariants can only occur with enough vertices
that writing down the full canonical sylvan resolution would be an
ineffective use of space.

Let $I = \<xy,y^3,z\>$, whose staircase is depicted here.
The three large black dots are the generators, the three large blue
dots are the first syzygies, and the large red dot is the second
syzygy.  The little dots and edges behind and below each of the large
dots form its Koszul simplicial complex: a triangle (a~nontrivial loop
in~\smash{$\red \HH_1$}) for the second syzygy; a disconnected union of two
vertices or an edge and a vertex (nontrivial~\smash{$\blu \HH_0$}) for
the first syzygies; and just the empty face
(nontrivial~\smash{$\HH_{-1}$}; not drawn) for the generators.

The canonical sylvan resolution of~$I$, notated as per
Convention~\ref{conv:block-matrix}, is as follows.
{\foot\vspace{-1.5ex}
$$%
\begin{array}{@{}c@{\,}}
   \HH_{-1} K^{110} \!\otimes\! \<xy\>
\\ \oplus
\\ \HH_{-1} K^{030} \!\otimes\! \<y^3\>
\\ \oplus
\\ \HH_{-1} K^{001} \!\otimes\! \<z\>
\end{array}
\xleftarrow{\!\!\!
\monomialmatrix
  {\nothing\\\nothing\\\nothing}
  {\begin{array}{@{\,}c@{\ }c@{\ }c@{\,}}
      \blu x\ y\ z    &\blu x\ y    &\blu y\ z   
  \\{}\![\,0\ 0\ 1\,] &  [\,0\ 1\,] &  [\,0\ 0\,]\!
  \\{}\![\,0\ 0\ 0\,] &  [\,1\ 0\,] &  [\,0\ 1\,]\!
  \\{}\![\,1\ 1\ 0\,] &  [\,0\ 0\,] &  [\,1\ 0\,]\!
  \end{array}}
  {\\\\\\}
\!\!\!}
{\blu
\begin{array}{@{}c@{\,}}
   \HH_0 K^{111} \!\otimes\! \<xyz\>
\\ \oplus
\\ \HH_0 K^{130} \!\otimes\! \<xy^3\>
\\ \oplus
\\ \HH_0 K^{031} \!\otimes\! \<y^3z\>
\end{array}
}
\xleftarrow{\!\!\!
\monomialmatrix
  {\blu x\\\blu y\\\blu z\\[.2ex]\blu x\\\blu y\\[.2ex]\blu y\\\blu z}
  {\begin{array}{@{}l@{\ }c@{\ }r@{}}
    \red\quad\, zy &\red\ \ yx &\red xz\ \,
    \\
    \!
    \left[
    \begin{array}{@{}r@{}}
	   \nicefrac49
	\\ \nicefrac19
	\\ \nicefrac{-5}9
    \end{array}
    \right.
    &
    \begin{array}{@{}r@{}}
	   \nicefrac59
	\\ \nicefrac{-1}9
	\\ \nicefrac{-4}9
    \end{array}
    &
    \left.
    \begin{array}{@{}c@{}}
	  \ \ 0\
	\\\ \ 0\
	\\\ \ 0\
    \end{array}
    \right]
    \!
    \\[3ex]
    \hspace{-.1ex}
    \left[
    \begin{array}{@{}r@{}}
	  \nicefrac{-1}2\!\!
	\\\nicefrac12\!\!
    \end{array}
    \right.
    &
    \begin{array}{@{\!\!}r@{}}
	  \ \ 0
	\\\ \ 0
    \end{array}
    &
    \left.
    \begin{array}{@{}r@{}}
	   \!\!\nicefrac{-1}2\,
	\\ \!\!\nicefrac12\,
    \end{array}
    \right]
    \hspace{-.1ex}
    \\
    \hspace{-.1ex}
    \left[
    \begin{array}{@{}r@{}}
	  \ \ \ 0
	\\\ \ \ 0
    \end{array}
    \right.
    &
    \begin{array}{@{}r@{}}
	   \nicefrac{-1}2\
	\\ \nicefrac12\
    \end{array}
    &
    \left.
    \begin{array}{@{}r@{}}
	   \!\!\nicefrac{-1}2\,
	\\ \!\!\nicefrac12\,
    \end{array}
    \right]
    \hspace{-.1ex}
    \end{array}}
  {\\\\\\[.6ex]\\\\[.6ex]\\\\}
\!\!\!}
{\red \HH_1 K^{131} \!\otimes\! \<xy^3z\>}
$$
}%

\noindent
Before getting to hedges and fences that compute the sylvan matrix
entries, several features are to be noted.  The column vector $\red
(1,1,1)^\top$ generates $\red \HH_1 K^{131}$ and maps to the sum $\blu
(1,0,-1)^\top \oplus (-1,1)^\top \oplus (-1,1)^\top$ in the
$\NN^3$-degree~$131$ component of the middle free module.  (The
relevant fractions magically either cancel or sum to~$1$.)  This
statement should be viewed geometrically on the staircase: the cycle
that is the red triangle $\red zy + yx + xz$ maps to the difference of
the two vertices closest to it in each of the blue simplicial
complexes.  At $\blu 111$ this is $\blu x - z$; at $\blu 130$ this is
$\blu y - x$; at $\blu 031$ this is $\blu z - y$.
The direct sum of these three blue cycles yields a vector of
length~$9$ that lies in the kernel of the $3 \times 9$ matrix---that
is, the ($3$~block)~$\times$~($3$~block) matrix.

\begin{excise}{%
  Let us start the sylvan matrix computations with $D^{001,\blu 031}$.
  There is only one lattice path $\lambda \in \Lambda(001,{\blu 031})$,
  namely
  $$%
  \begin{array}{@{}r@{\qquad\quad}c@{\,}c@{\,}c@{\,}c@{\,}c@{\,}c@{\,}c@{\,}c@{}}
     &    &\qquad\qquad&        &\qquad\qquad&     &\qquad\qquad\\[-2.5ex]
  \\[-2.5ex]\lambda:&
      001    &\fillbar&     011    &\fillbar&   021   &\fillbar&     \blu 031
  \\\st_0^{\,\lambda}:&
  \mko{T_{-1} = \{\}}&&\mko{T_0 = \{y\}}&&\mko{T_0 = \{y\}}&&\mko{\blu S_0 = \nothing}
  \\ &         &&\mko{S_{-1}=\{\nothing\}}&&\mko{S_{-1}=\{\nothing\}}
  \end{array}
  $$
  whose unique dimension~$0$ hedgerow is indicated underneath.  This
  uniqueness implies
  $$%
    \Delta_{0,\lambda} = 1
  $$
  There is furthermore only one chain-link fence along~$\lambda$: the
  initial post~$\blu x$ is barred because $\blu x$ is not a face
  of~$\blu K^{031} I$; the initial post~$\blu z$ is boundary-linked only
  to~$\blu z$ (because $\blu K^{031} I$ has no boundaries in
  dimension~$0$) and no continuation is possible because $y$ is not a
  face of~$\blu z$; and the initial post~$\blu y$ yields alternating
  faces~$y$~and~$\nothing$.
  $$%
  \begin{array}{*{3}{@{}c@{}}}
  \\[-2.2ex]
     &&{\blu z \edgehoriz 1 z}
  \\ &\edgeup 0&
  \\\phantom{\nothing \edgehoriz 1 \nothing}
  \\[-.2ex]
  \end{array}
  \qquad\qquad
  \begin{array}{*{11}{@{}c@{}}}
  \\[-2.2ex]
     &         &y&           &        &         &y&           &        &         &
    \blu y \edgehoriz 1 y
  \\ &\edgeup 1& &\edgedown 1&        &\edgeup 1& &\edgedown 1&        &\edgeup 1&
  \\ \nothing\edgehoriz 1\nothing
     &         & &           &\nothing&         & &           &\nothing&         &
  \\[-.2ex]
  \end{array}
  $$
  Multiplying the coefficients in this fence to get $D^{001,{\blu
  031}}_{\nothing,\blu y} = 1$ completes the sylvan matrix $D^{001,{\blu
  031}} = [0\ 1\ 0]$ in the lower-right corner of $F_0 \from {\blu
  F_1}$.  Similar computations---but easier, because the lattice paths
  are shorter---yield the sylvan matrices
  $$%
  \begin{array}{ccc}
  D^{110,{\blu 111}}=[0\ 0\ 1]&D^{110,{\blu 130}}=[0\ 1\ 0]\\[.5ex]
                              &D^{030,{\blu 130}}=[1\ 0\ 0]&D^{030,{\blu 031}}=[0\ 0\ 1]
  \end{array}
  $$
  in the top two rows of $F_0 \from {\blu F_1}$.
}\end{excise}%
For a sample sylvan matrix computation for $F_0 \from {\blu F_1}$,
consider the $1 \times 3$ sylvan matrix $D^{001,{\blu 111}}$ in the
lower-left corner.  It involves two lattice paths
in~$\Lambda(001,{\blu 111})$:
$$%
\begin{array}{@{}r@{\qquad\quad}c@{\,}c@{\,}c@{\,}c@{\,}c@{\,}c@{}}
   &    &\qquad\qquad&        &\qquad\qquad&     &\qquad\qquad\\[-2.5ex]
\\[-2.5ex]\lambda:&
    001    &\fillbar&     011    &\fillbar&   \blu 111
\\\st_0^{\,\lambda}:&
\mko{T_{-1} = \{\}}&&\mko{T_0 = \{y\}}&&\mko{{\blu S_0 = \{x\}}}
\\ &         &&\mko{S_{-1}=\{\nothing\}}&&\mko{\phantom{S_0\!}\text{ or }{\blu \{y\}}}
\end{array}
$$
and
$$%
\begin{array}{@{}r@{\qquad\quad}c@{\,}c@{\,}c@{\,}c@{\,}c@{\,}c@{}}
\\[-4ex]
   &    &\qquad\qquad&        &\qquad\qquad&     &\qquad\qquad\\[-2.5ex]
\\[-2.5ex]\lambda':&
    001    &\fillbar&     101    &\fillbar&   \blu 111
\\\st_0^{\,\lambda}:&
\mko{T_{-1} = \{\}}&&\mko{T_0 = \{x\}}&&\mko{{\blu S_0 = \{x\}}}
\\ &         &&\mko{S_{-1}=\{\nothing\}}&&\mko{\phantom{S_0\!}\text{ or }{\blu \{y\}}}
\end{array}
$$
both of which have two choices for the initial~$\blu S_0$ and hence
have
$$%
  \Delta_{0,\lambda'} = \Delta_{0,\lambda} = 1 \cdot 1 \cdot {\blu 2} = 2.
$$
For~$\lambda$, regardless of whether $\blu S_0 = \{x\}$ or $\blu S_0 =
\{y\}$, the initial post~$\blu z$ is boundary-linked only to~$\blu z
\in \ol S_0$, which does not contain~$y$ and hence yields no fences
along~$\lambda$.  The initial post~$\blu y$ is boundary-linked
to~$\blu y$ if $\blu S_0 = \{x\}$ and to~$\blu x$ if $\blu S_0 =
\{y\}$.  However, the case of $\blu \tau_0\,\hbox{---}\,\tau =
y\,\hbox{---}\,y$ has no continuation because~$x$ is not a face
of~$\blu y$.  In contrast, the case of $\blu \tau_0\,\hbox{---}\,\tau
= x\,\hbox{---}\,y$ yields a valid fence.  The same logic shows that
$\blu \tau_0\,\hbox{---}\,\tau = y\,\hbox{---}\,x$ when $\blu S_0 =
\{x\}$ has no continuation but $\blu \tau_0\,\hbox{---}\,\tau =
x\,\hbox{---}\,x$ when $\blu S_0 = \{y\}$ yields a valid fence.  These
possibilities for $\lambda = 001\,\hbox{---}\,011\,\hbox{---}\,{\blu
111}$ are summarized as follows.
$$%
\!\!\!\!\!
\begin{array}{*{3}{@{}c@{}}}
\\[-2.2ex]
   &&{\blu z \edgehoriz 1 z}
\\ &\edgeup 0&
\\\phantom{\nothing}
\\[1ex]
   &&\mko{{\blu S_0 = \{x\}}}
\\ &&\mko{\phantom{S_0\!}\text{ or }{\blu \{y\}}}
\\[-.2ex]
\end{array}
\qquad
\begin{array}{*{3}{@{}c@{}}}
\\[-2.2ex]
   &&{\blu y \edgehoriz 1 y}
\\ &\edgeup 0&
\\\phantom{\nothing}
\\[1ex]
   &&\mko{\blu S_0 = \{x\}\ }
\\ &&\mko{\phantom{S_0\!\text{ or }{\blu \{y\}}}}
\\[-.2ex]
\end{array}
\qquad
\begin{array}{*{7}{@{}c@{}}}
\\[-2.2ex]
               &         &y&           &        &         &\blu x \edgehoriz 1 y
\\             &\edgeup 1& &\edgedown 1&        &\edgeup 1&
\\ \nothing
   \edgehoriz 1
       \nothing&         & &           &\nothing&         &
\\[1ex]
   &&&&&&\mko{\blu S_0 = \{y\}\ \ }
\\ &&&&&&\mko{\phantom{\blu S_0 = \{y\}\ \ }}
\\[-.2ex]
\end{array}
\qquad
\begin{array}{*{3}{@{}c@{}}}
\\[-2.2ex]
   &&{\blu y \edgehoriz 1 x}
\\ &\edgeup 0&
\\\phantom{\nothing}
\\[1ex]
   &&\mko{\blu S_0 = \{x\}\ }
\\ &&\mko{\phantom{S_0\!\text{ or }{\blu \{y\}}}}
\\[-.2ex]
\end{array}
\qquad
\begin{array}{*{7}{@{}c@{}}}
\\[-2.2ex]
               &         &y&           &        &         &\blu x \edgehoriz 1 x
\\             &\edgeup 1& &\edgedown 1&        &\edgeup 1&
\\ \nothing
   \edgehoriz 1
       \nothing&         & &           &\nothing&         &
\\[1ex]
   &&&&&&\mko{\blu S_0 = \{y\}\ \ }
\\ &&&&&&\mko{\phantom{\blu S_0 = \{y\}\ \ }}
\\[-.2ex]
\end{array}
$$
The lattice path~$\lambda$ therefore contributes $\frac12[1\ 1\ 0]$ to
$D^{001,{\blu 111}}$.
\begin{excise}{%
  The calculation for the other path $\lambda' =
  001\,\hbox{---}\,101\,\hbox{---}\,{\blu 111} \in \Lambda(001,{\blu
  111})$ is obtained by swapping the roles of~$x$ and~$y$ throughout; it
  also contributes $\frac12[1\ 1\ 0]$, so $D^{001,{\blu 111}} =
  \frac12[1\ 1\ 0] + \frac12[1\ 1\ 0] = [1\ 1\ 0]$.

  It remains to compute ${\blu F_1} \from {\red F_2}$.  Start with
  $D^{{\blu 130},{\red 131}}$.  There is only one lattice path $\lambda
  \in \Lambda({\blu 130},{\red 131})$, namely $\lambda = {\blu
  130}\,\hbox{---}\,{\red 131}$.  Since $\red K^{111}$ has
  dimension~$1$, it has no boundaries in dimension~$1$ and hence $\red
  S_1^{131} = \nothing$ is forced; this remains true for the entire
  computation of~$D^{{\blu 130},{\red 131}}$.  On the other hand, every
  vertex of~$\blu K^{130}$ has boundary~$\blu\nothing$ and hence is a
  valid choice of~$\blu T_0^{130}$; this reasoning holds more generally
  for~$T_0$ in any nonempty CW~complex.  Consequently
  $$%
    \Delta_{0,{\blu 130}-{\red 131}}
    =
    {\blu 2} \cdot {\red 1}
    =
    2.
  $$
  For chain-link fences, the initial boundary-link $\red
  yx\,\hbox{---}\,yx$ has no continuation because $\blu z$~is not a face
  of~$\red yx$; this explains the column of zeros in the middle block of
  ${\blu F_1} \from {\red F_2}$.  The initial boundary-link $\red
  xz\,\hbox{---}\,xz$ yields $\blu x$ with coefficient~$-1$; when $\blu
  T_0 = \{x\}$ the $\blu T_0$-circuit of~$\blu x$ is $\zeta_{\blu
  T_0}({\blu x}) = {\blu x - x} = {\blu 0}$; but when $\blu T_0 = \{y\}$
  the $\blu T_0$-circuit of~$\blu x$ is $\zeta_{\blu T_0}({\blu x}) =
  {\blu x - y}$, so two chain-link fences result, one in $\Phi_{{\blu
  x},{\red xz}}$ with coefficient~${\blu 1} \cdot -1 \cdot {\red 1} =
  -1$ and one in $\Phi_{{\blu y},{\red xz}}$ with coefficient ${\blu -1}
  \cdot -1 \cdot {\red 1} = 1$.  This explains the right-hand column in
  the middle block of ${\blu F_1} \from {\red F_2}$.  The left-hand
  column is similar, with $\phi \in \Phi_{{\blu x},{\red zy}}$ having
  coefficient~$w_\phi = {\blu -1} \cdot 1 \cdot {\red 1} = 1$ and $\phi
  \in \Phi_{{\blu y},{\red zy}}$ having coefficient $w_\phi = {\blu 1}
  \cdot 1 \cdot {\red 1} = 1$.
  $$%
  \begin{array}{*{3}{@{}c@{}}}
  \\[-2.2ex]
     &&{\red yx \edgehoriz 1 yx}
  \\ &\edgeup 0&
  \\\phantom{\nothing}
  \\[1ex]
     &&\mko{{\blu T_0 = \{x\}}\quad}
  \\ &&\mko{\phantom{S_0\!}\text{ or }{\blu \{y\}}\quad}
  \\[-.2ex]
  \end{array}
  \qquad
  \begin{array}{*{3}{@{}c@{}}}
  \\[-2.2ex]
     &&{\red xz \edgehoriz 1 xz}
  \\ &\edgeup{-1}&
  \\ \blu \phantom{y} \edgehoriz 0 x
  \\[1ex]
     &\mko{{\blu T_0 = \{x\}}\ }
  \\ &\mko{\phantom{S_0\!\text{ or }{\blu \{y\}}}}
  \\[-.2ex]
  \end{array}
  \qquad
  \begin{array}{*{3}{@{}c@{}}}
  \\[-2.2ex]
     &&{\red xz \edgehoriz 1 xz}
  \\ &\edgeup{-1}&
  \\ \blu x \edgehoriz 1 x
  \\[1ex]
     \mko{\quad\ {\blu T_0 = \{y\}}}
  \\ \mko{\phantom{S_0\!\text{ or }{\blu \{x\}}}}
  \\[-.2ex]
  \end{array}
  \qquad
  \begin{array}{*{3}{@{}c@{}}}
  \\[-2.2ex]
     &&{\red xz \edgehoriz 1 xz}
  \\ &\edgeup{-1}&
  \\ \blu y \edgehoriz{-1} x
  \\[1ex]
     \mko{\quad\ {\blu T_0 = \{y\}}}
  \\ \mko{\phantom{S_0\!\text{ or }{\blu \{x\}}}}
  \\[-.2ex]
  \end{array}
  $$
  $$%
  \phantom{
  \begin{array}{*{3}{@{}c@{}}}
  \\[-2.2ex]
     &&{\red yx \edgehoriz 1 yx}
  \\ &\edgeup 0&
  \\\phantom{\nothing}
  \\[1ex]
     &&\mko{{\blu T_0 = \{x\}}\quad}
  \\ &&\mko{\phantom{S_0\!}\text{ or }{\blu \{y\}}\quad}
  \\[-.2ex]
  \end{array}
  }
  \qquad
  \begin{array}{*{3}{@{}c@{}}}
  \\[-2.2ex]
     &&{\red zy \edgehoriz 1 zy}
  \\ &\edgeup 1&
  \\ \blu \phantom{y} \edgehoriz 0 y
  \\[1ex]
     &\mko{{\blu T_0 = \{y\}}\ }
  \\ &\mko{\phantom{S_0\!\text{ or }{\blu \{y\}}}}
  \\[-.2ex]
  \end{array}
  \qquad
  \begin{array}{*{3}{@{}c@{}}}
  \\[-2.2ex]
     &&{\red zy \edgehoriz 1 zy}
  \\ &\edgeup 1&
  \\ \blu y \edgehoriz 1 y
  \\[1ex]
     \mko{\quad\ {\blu T_0 = \{x\}}}
  \\ \mko{\phantom{S_0\!\text{ or }{\blu \{x\}}}}
  \\[-.2ex]
  \end{array}
  \qquad
  \begin{array}{*{3}{@{}c@{}}}
  \\[-2.2ex]
     &&{\red xz \edgehoriz 1 xz}
  \\ &\edgeup{-1}&
  \\ \blu x \edgehoriz{-1} y
  \\[1ex]
     \mko{\quad\ {\blu T_0 = \{x\}}}
  \\ \mko{\phantom{S_0\!\text{ or }{\blu \{x\}}}}
  \\[-.2ex]
  \end{array}
  $$
  
  The calculation of $D^{{\blu 031},{\red 131}}$ is the same as
  $D^{{\blu 130},{\red 131}}$ after cyclic permutation of the variables
  $x \mapsto y \mapsto z$.  Note: this symmetry argument applies to the
  relative locations of the Koszul simplicial complexes at~$\aa$
  and~$\bb$, not to the absolute positions of~$\aa$ and~$\bb$.
}\end{excise}%

For a sample sylvan matrix computation for ${\blu F_1} \from {\red
F_2}$, compute $D^{{\blu 111},{\red 131}}$ using the sole lattice path
$\lambda \in \Lambda({\blu 111},{\red 131})$, namely
$$%
\begin{array}{@{}r@{\qquad\quad}c@{\,}c@{\,}c@{\,}c@{\,}c@{\,}c@{}}
   &    &\qquad\qquad&        &\qquad\qquad\\[-2.5ex]
\\[-2.5ex]\lambda:&
  \blu 111 &\fillbar&     121    &\fillbar& \red 131 
\\\st_0^{\,\lambda}:&
\mko{\blu T_0 = \{x\}}&&\mko{T_1 = \{zy,yx\}}&&\mko{\red S_1 = \nothing}
\\                  &
\mko{\phantom{T_0\!}\text{ or }{\blu \{y\}}}&&\mko{S_0=\{y,z\}}
\\                  &
\mko{\phantom{T_0\!}\text{ or }{\blu \{z\}}}&&\mko{\phantom{S_0}\!\text{ or }\{x,z\}}
\\                  &
\mko{\phantom{T_0\!\text{ or }{\blu \{z\}}}}&&\mko{\phantom{S_0}\!\text{ or }\{x,y\}}
\end{array}
$$
The vector space of homological stage~$0$ boundaries in~$K^{121}$ has
dimension~$2$, so any pair of vertices is a stake set because no
single vertex is a boundary.  Thus
$$%
  \Delta_{1,{\blu 111}-{\red 131}}
  =
  {\blu 3} \cdot 3 \cdot {\red 1}
  =
  9.
$$
The initial boundary-link $\red xz\,\hbox{---}\,xz$ has no
continuation because $y$ is not a face of~$\red xz$; this explains the
zero column in the top block of ${\blu F_1} \from {\red F_2}$.

The initial boundary-link $\red yx\,\hbox{---}\,yx$ yields a much more
interesting computation.  When the stake set $S_0 = \{y,z\}$ is
selected at the interior lattice point~$121$, the facet~$x$ of~$\red
yx$ is forced by
Definition~\ref{d:fence}.\raisebox{.3ex}{\hspace{.1ex}\foot/}, but $x$
is not a stake, so no continuation is possible.  In contrast, when
$S_0 = \{x,z\}$ is selected, four chain-link fences result: two from
the choice of $\blu T_0 = \{y\}$, because the circuit of~$\blu x$ with
respect to the shrubbery~$\blu \{y\}$ is $\blu x - y$, so each of the
terms $\blu x$ and $\blu -y$ contributes a cycle-link; and two from
$\blu T_0 = \{z\}$ for a similar reason with~$\blu z$ in place
of~$\blu y$.  To save space, these four fences are
drawn~as~a~single~fork.
$$%
\begin{array}{*{3}{@{}c@{}}}
\\[-2.2ex]
   &&{\red yx \edgehoriz 1 yx}
\\ &\edgeup 1&
\\ x \makebox[0pt][l]{$\ \not\in S_0$}
\\[1ex]
   \mko{\qquad\qquad S_0 = \{y,z\}}
\\[-.2ex]
\end{array}
\qquad
\qquad
\qquad
\begin{array}{*{7}{@{}c@{}}}
\\[-2.2ex]
    &         &yx&           &   &         &\red yx \edgehoriz 1 yx
\\  &\edgeup 1&  &\edgedown 1&   &\edgeup 1&
\\
\blu
\begin{array}{@{}r@{\ }c@{\ \,}c@{\,}c@{}}
\\[-6.3ex]
  & x\!\!\!
\\[-1.3ex]
  \blu T_0 = \{y\}\colon
  & & \diagdown\hspace{.1ex}\raisebox{1ex}{\tiny\mkl{\hspace{-2ex}\scriptstyle 1}}
\\[-1.7ex]
  & y \raisebox{.75ex}{\tiny\mkl{\,\scriptstyle -1}} & \fillbar
\\[-1.6ex]
  &                                                  &          & x
\\[-1.6ex]
  & x\raisebox{-.5ex}{\tiny\mkl{\ \ \scriptstyle 1}} & \fillbar
\\[-1.7ex]
  \blu T_0 = \{z\}\colon
  & & \diagup\hspace{.1ex}\raisebox{-.5ex}{\tiny\mkl{\hspace{-2.5ex}\scriptstyle -1}}
\\[-1.5ex]
  & z\!\!\!
\\[-3.6ex]
\end{array}
  &           &  &             & x &         &
\\[1ex]
  & &&&&\mko{S_0 = \{x,z\}}
\\[-.2ex]
\end{array}
$$
When $S_0 = \{x,y\}$ is selected, eight chain-link fences result, by
reasoning as in the four fences just constructed.  (In fact, the first
four fences, depicted in the left-hand diagram here, have exactly the
same sequences of simplices and coefficients as the four fences just
produced.  They count separately because they are subordinate to a
different hedgerow.)  The reason why there are twice as many is that
the shrub of~$x$ for the hedge $\st_1 = (\{x,y\}, \{zy,yx\})$ is $zy +
yx$, the unique path in $K^{121}$ that joins the non-stake~$z$ to the
stake~$x$.  Thus the fence with $S_0 = \{x,y\}$ bifurcates at~$x$
into~$yx$~and~$zy$.
$$%
\begin{array}{*{7}{@{}c@{}}}
\\[-2.2ex]
    &         &yx&           &   &         &\red yx \edgehoriz 1 yx
\\  &\edgeup 1&  &\edgedown 1&   &\edgeup 1&
\\
\blu
\begin{array}{@{}r@{\ }c@{\ \,}c@{\,}c@{}}
\\[-6.3ex]
  & x\!\!\!
\\[-1.3ex]
  \blu T_0 = \{y\}\colon
  & & \diagdown\hspace{.1ex}\raisebox{1ex}{\tiny\mkl{\hspace{-2ex}\scriptstyle 1}}
\\[-1.7ex]
  & y \raisebox{.75ex}{\tiny\mkl{\,\scriptstyle -1}} & \fillbar
\\[-1.6ex]
  &                                                  &          & x
\\[-1.6ex]
  & x\raisebox{-.5ex}{\tiny\mkl{\ \ \scriptstyle 1}} & \fillbar
\\[-1.7ex]
  \blu T_0 = \{z\}\colon
  & & \diagup\hspace{.1ex}\raisebox{-.5ex}{\tiny\mkl{\hspace{-2.5ex}\scriptstyle -1}}
\\[-1.5ex]
  & z\!\!\!
\\[-3.6ex]
\end{array}
  &           &  &             & x &         &
\\[1ex]
  & &&&&\mko{S_0 = \{x,y\}}
\\[-.2ex]
\end{array}
\qquad
\begin{array}{*{7}{@{}c@{}}}
\\[-2.2ex]
    &           &zy&           &   &         &\red yx \edgehoriz 1 yx
\\  &\edgeup{-1}&  &\edgedown 1&   &\edgeup 1&
\\
\blu
\begin{array}{@{}r@{\ }c@{\ \,}c@{\,}c@{}}
\\[-6.3ex]
  & z\!\!\!
\\[-1.3ex]
  \blu T_0 = \{x\}\colon
  & & \diagdown\hspace{.1ex}\raisebox{1ex}{\tiny\mkl{\hspace{-2ex}\scriptstyle 1}}
\\[-1.7ex]
  & x \raisebox{.75ex}{\tiny\mkl{\, \scriptstyle -1}} & \fillbar
\\[-1.6ex]
  &                                                   &          & z
\\[-1.6ex]
  & z \raisebox{-.5ex}{\tiny\mkl{\ \ \scriptstyle 1}} & \fillbar
\\[-1.7ex]
  \blu T_0 = \{y\}\colon
  & & \diagup\hspace{.1ex}\raisebox{-.5ex}{\tiny\mkl{\hspace{-2.5ex}\scriptstyle -1}}
\\[-1.5ex]
  & y\!\!\!
\\[-3.6ex]
\end{array}
  &             &  &             & x &         &
\\[1ex]
  & &&&&\mko{S_0 = \{x,y\}}
\\[-.2ex]
\end{array}
$$
Now count weighted fences as a sum of three terms, one from each fork:
\begin{align*}
   9 D_{{\blu x},{\red yx}} &= 2 + 2 + 1 = 5
\\ 9 D_{{\blu y},{\red yx}} &= -1 -1 + 1 = -1
\\ 9 D_{{\blu z},{\red yx}} &= -1 -1 -2 = -4,
\end{align*}
where $9 = \Delta_{1,{\blu 111}-{\red 131}}$, as calculated before.
This explains the middle column in the top block of ${\blu F_1} \from
{\red F_2}$.
\begin{excise}{%
  The left column, starting from the initial boundary-link $\red
  zy\,\hbox{---}\,zy$, is similar---in fact, the transposition $x
  \leftrightarrow z$ of the computation for~$\red yx$ just completed,
  with different signs to compensate.

  In particular, $S_0 = \{x,y\}$ yields no fences because~$z$ is not a
  stake, and the remaining fences can be drawn with three forks.
  $$%
  \begin{array}{*{3}{@{}c@{}}}
  \\[-2.2ex]
     &&{\red zy \edgehoriz 1 zy}
  \\ &\edgeup{-1}&
  \\ z \makebox[0pt][l]{$\ \not\in S_0$}
  \\[1ex]
     \mko{\qquad\qquad S_0 = \{x,y\}}
  \\[-.2ex]
  \end{array}
  \qquad
  \qquad
  \qquad
  \begin{array}{*{7}{@{}c@{}}}
  \\[-2.2ex]
      &                     &zy&                       &   &&\red zy \edgehoriz 1 zy
  \\  &\edgeup{\!{\sst-}\!1}&  &\edgedown{\!{\sst-}\!1}&   &\edgeup{\!{\sst-}\!1}&
  \\
  \blu
  \begin{array}{@{}r@{\ }c@{\ \,}c@{\,}c@{}}
  \\[-6.3ex]
    & z\!\!\!
  \\[-1.3ex]
    \blu T_0 = \{y\}\colon
    & & \diagdown\hspace{.1ex}\raisebox{1ex}{\tiny\mkl{\hspace{-2ex}\scriptstyle 1}}
  \\[-1.7ex]
    & y \raisebox{.75ex}{\tiny\mkl{\, \scriptstyle -1}} & \fillbar
  \\[-1.6ex]
    &                                                   &          & z
  \\[-1.6ex]
    & z \raisebox{-.5ex}{\tiny\mkl{\ \ \scriptstyle 1}} & \fillbar
  \\[-1.7ex]
    \blu T_0 = \{x\}\colon
    & & \diagup\hspace{.1ex}\raisebox{-.5ex}{\tiny\mkl{\hspace{-2.5ex}\scriptstyle -1}}
  \\[-1.5ex]
    & x\!\!\!
  \\[-3.6ex]
  \end{array}
    &                       &  &                       & z &                     &
  \\[1ex]
    & &&&&\mko{S_0 = \{x,z\}}
  \\[-.2ex]
  \end{array}
  $$
  $$%
  \begin{array}{*{7}{@{}c@{}}}
  \\[-2.2ex]
      &                     &zy&                       &   &&\red zy \edgehoriz 1 zy
  \\  &\edgeup{\!{\sst-}\!1}&  &\edgedown{\!{\sst-}\!1}&   &\edgeup{\!{\sst-}\!1}&
  \\
  \blu
  \begin{array}{@{}r@{\ }c@{\ \,}c@{\,}c@{}}
  \\[-6.3ex]
    & z\!\!\!
  \\[-1.3ex]
    \blu T_0 = \{y\}\colon
    & & \diagdown\hspace{.1ex}\raisebox{1ex}{\tiny\mkl{\hspace{-2ex}\scriptstyle 1}}
  \\[-1.7ex]
    & y \raisebox{.75ex}{\tiny\mkl{\, \scriptstyle -1}} & \fillbar
  \\[-1.6ex]
    &                                                   &          & z
  \\[-1.6ex]
    & z \raisebox{-.5ex}{\tiny\mkl{\ \ \scriptstyle 1}} & \fillbar
  \\[-1.7ex]
    \blu T_0 = \{x\}\colon
    & & \diagup\hspace{.1ex}\raisebox{-.5ex}{\tiny\mkl{\hspace{-2.5ex}\scriptstyle -1}}
  \\[-1.5ex]
    & x\!\!\!
  \\[-3.6ex]
  \end{array}
    &                       &  &                       & z &                     &
  \\[1ex]
    & &&&&\mko{S_0 = \{y,z\}}
  \\[-.2ex]
  \end{array}
  \qquad
  \begin{array}{*{7}{@{}c@{}}}
  \\[-2.2ex]
      &                     &yx&                       &   &&\red zy \edgehoriz 1 zy
  \\  &\edgeup{\!{\sst-}\!1}&  &\edgedown{\!{\sst-}\!1}&   &\edgeup 1&
  \\
  \blu
  \begin{array}{@{}r@{\ }c@{\ \,}c@{\,}c@{}}
  \\[-6.3ex]
    & x\!\!\!
  \\[-1.3ex]
    \blu T_0 = \{z\}\colon
    & & \diagdown\hspace{.1ex}\raisebox{1ex}{\tiny\mkl{\hspace{-2ex}\scriptstyle 1}}
  \\[-1.7ex]
    & z \raisebox{.75ex}{\tiny\mkl{\, \scriptstyle -1}} & \fillbar
  \\[-1.6ex]
    &                                                   &          & x
  \\[-1.6ex]
    & x \raisebox{-.5ex}{\tiny\mkl{\ \ \scriptstyle 1}} & \fillbar
  \\[-1.7ex]
    \blu T_0 = \{y\}\colon
    & & \diagup\hspace{.1ex}\raisebox{-.5ex}{\tiny\mkl{\hspace{-2.5ex}\scriptstyle -1}}
  \\[-1.5ex]
    & y\!\!\!
  \\[-3.6ex]
  \end{array}
    &                       &  &                       & z &                     &
  \\[1ex]
    & &&&&\mko{S_0 = \{y,z\}}
  \\[-.2ex]
  \end{array}
  $$
  Counting weighted fences as before yields
  \begin{align*}
     9 D_{{\blu x},{\red zy}} &= 1 + 1 + 2 = 4
  \\ 9 D_{{\blu y},{\red zy}} &= 1 + 1 - 1 = 1
  \\ 9 D_{{\blu z},{\red zy}} &= -2 - 2 - 1 = -5
  \end{align*}
  to explain the left column of the top block of ${\blu F_1} \from {\red
  F_2}$.
%
}\end{excise}%
\medskip

\begin{excise}{%
  \begin{example}\label{e:hull=square}
  Let $I = \<yz,xz,xy^2,x^2y\>$, whose staircase is depicted here.
  $$%
  \psfrag{x}{$\!x$}
  \psfrag{y}{$y$}
  \psfrag{z}{$z$}
  \includegraphics[height=2.5in]{hull-square}
  $$
  The canonical sylvan resolution of~$I$, notated as per
  Convention~\ref{conv:block-matrix}, is as follows.
  {\foot\vspace{.5ex}
  $$%
  \begin{array}{@{}c@{\!\!\!}}
     \HH_{{\sst\!-\!}1} K^{011} \!\otimes\! \<yz\>\quad
  \\ \oplus
  \\ \HH_{{\sst\!-\!}1} K^{101} \!\otimes\! \<xz\>\quad
  \\ \oplus
  \\ \HH_{{\sst\!-\!}1} K^{120} \!\otimes\! \<xy^2\>\ \,
  \\ \oplus
  \\ \HH_{{\sst\!-\!}1} K^{210} \!\otimes\! \<x^2y\>\ \,
  \end{array}
  \xleftarrow{\!\!\!
  \monomialmatrix
    {\nothing\!\\\nothing\!\\\nothing\!\\\nothing\!}
    {\scriptstyle
     \begin{array}{@{\!}c@{\ }c@{\hspace{-.3ex}}*2{c@{\ }c@{\ }c@{\hspace{-.3ex}}}c@{\ }c@{\!}}
        \blu x\!\!&\blu\!\!y
       &\blu x\!\!&\blu\!y&\blu\!\!z
       &\blu x\!\!&\blu\!y&\blu\!\!z
       &\blu x\!\!&\blu\!\!y
    \\[-.2ex][\,0&0\,]&[\,\nf34&\nf34&0\,]&[\,\nf14&\nf14&0\,]&[\,1&0\,]
    \\[-.2ex][\,0&0\,]&[\,\nf14&\nf14&0\,]&[\,\nf34&\nf34&0\,]&[\,0&1\,]
    \\[-.2ex][\,1&0\,]&[\,\,0\,& 0\, &1\,]&[\,\,0\,& 0\, &0\,]&[\,0&0\,]
    \\[-.2ex][\,0&1\,]&[\,\,0\,& 0\, &0\,]&[\,\,0\,& 0\, &1\,]&[\,0&0\,]
    \end{array}}
    {\\\\\\\\}
  \!\!\!}
  {\blu
  \begin{array}{@{}c@{\,}}
     \HH_{\!0} K^{220} \!\otimes\! \<x^2y^2\>
  \\ \oplus
  \\ \HH_{\!0} K^{121} \!\otimes\! \<xy^2z\>
  \\ \oplus
  \\ \HH_{\!0} K^{211} \!\otimes\! \<x^2yz\>
  \\ \oplus
  \\ \HH_{\!0} K^{111} \!\otimes\! \<xyz\>
  \end{array}
  }
  \xleftarrow{\!\!\!
  \monomialmatrix
    {\blu x\\\blu y\\[.2ex]
     \blu x\\\blu y\\\blu z\\[.2ex]
     \blu x\\\blu y\\\blu z\\[.2ex]
     \blu x\\\blu y}
    {\begin{array}{@{\!}l@{}c@{}r@{\!}}
      \red\ \ zy &\red\ yx &\red xz\ \,
      \\
      \hspace{-.2ex}
      \left[
      \begin{array}{@{}r@{}}
  	  \nfn12
  	\\\nf 12
      \end{array}
      \right.
      &
      \begin{array}{@{}r@{}}
  	  \ \ 0\
  	\\\ \ 0\
      \end{array}
      &
      \left.
      \begin{array}{@{}r@{}}
  	   \nfn12\,
  	\\ \nf 12\,
      \end{array}
      \right]
      \hspace{-.2ex}
      \\[1.5ex]
      \!
      \left[\!
      \begin{array}{@{}r@{}}
  	  \ \ 0
  	\\\ \ 0
  	\\\ \ 0
      \end{array}
      \right.
      &
      \begin{array}{@{}r@{}}
  	   \nf 13
  	\\ \nfn23
  	\\ \nf 13
      \end{array}
      &
      \left.
      \begin{array}{@{}r@{}}
  	   \nfn13
  	\\ \nfn13
  	\\ \nf 23
      \end{array}
      \right]
      \!
      \\[3ex]
      \!
      \left[\!
      \begin{array}{@{}r@{}}
  	   \nf 13
  	\\ \nf 13
  	\\ \nfn23
      \end{array}
      \right.
      &
      \begin{array}{@{}r@{}}
  	   \nf 23
  	\\ \nfn13
  	\\ \nfn13
      \end{array}
      &
      \left.
      \begin{array}{@{}r@{}}
  	   0\,
  	\\ 0\,
  	\\ 0\,
      \end{array}
      \right]
      \!
      \\[3ex]
      \hspace{-.2ex}
      \left[\!
      \begin{array}{@{}r@{}}
  	  \ \ 0
  	\\\ \ 0
      \end{array}
      \right.
      &
      \begin{array}{@{}r@{}}
  	   \nf 12
  	\\ \nfn12
      \end{array}
      &
      \left.
      \begin{array}{@{}r@{}}
  	   0\,
  	\\ 0\,
      \end{array}
      \right]
      \hspace{-.2ex}
      \end{array}}
    {\\\\[.6ex]\\\\\\[.6ex]\\\\\\[.6ex]\\\\}
  \!\!\!}
  {\red \HH_{\!1\hspace{-.1ex}} K^{221} \!\otimes\! \<x^2y^2z\>}
  $$
  }%
  
  \noindent
  Several features are to be noted.  As in
  Example~\ref{e:full-sylvan-res}, the column $\red (1,1,1)^\top$
  generates $\red \HH_1 K^{221}$, this time mapping to the sum $\blu
  (-1,1)^\top \oplus (0,-1,1)^\top \oplus (1,0,-1)^\top
  \oplus\nolinebreak (\nf12,\nfn12)^\top$ in the $\NN^3$-degree~$221$
  component of the middle free module.  This vector of length~$10$ lies
  in the kernel of the ($4$~block)~$\times$~($4$~block) matrix for $F_0
  \from {\blu F_1}$: again, the relevant fractions magically all cancel.
  Even in $F_0 \from {\blu F_1}$ the coefficients are not all~$\pm 1$,
  so the first syzygies do not simply align one whole generator with
  positive sign against another with negative sign.
  
  This example illustrates that lattice paths can cross and overlap
  without harm.
  
  The chain-link fence calculations mostly follow those in
  Example~\ref{e:full-sylvan-res}.  Exception:
  $$%
  \begin{array}{@{}r@{\qquad\quad}c@{\,}c@{\,}c@{\,}c@{\,}c@{\,}c@{}}
  \\[-4ex]
     &    &\qquad\qquad&        &\qquad\qquad&     &\qquad\qquad\\[-2.5ex]
  \\[-2.5ex]\lambda:&
      011    &\fillbar&     111    &\fillbar&   \blu 121
  \\\st_0^{\,\lambda}:&
  \mko{T_{-1} = \{\}}&&\mko{T_0 = \{x\}}&&\mko{{\blu S_0 = \{x\}}}
  \\ &               &&\mko{\phantom{T_0\!}\text{ or }{\{y\}}}
                                        &&\mko{\phantom{S_0\!}\text{ or }{\blu\{y\}}}
  \\ &               &&\mko{S_{-1}=\{\nothing\}}
  \end{array}
  $$
  yields two chain-link fences which, together, contribute $\frac14[1\
  1\ 0]$ to $D^{011,{\blu 121}}$.
  $$%
  \begin{array}{*{7}{@{}c@{}}}
  \\[-2.2ex]
                 &         &x&           &        &         &\blu y \edgehoriz 1 x
  \\             &\edgeup 1& &\edgedown 1&        &\edgeup 1&
  \\ \nothing
     \edgehoriz 1
         \nothing&         & &           &\nothing&         &
  \\[1ex]
     &&\mko{T_0 = \{x\}\quad}&&&&\mko{\quad\blu S_0 = \{x\}}
  \\[-.2ex]
  \end{array}
  \qquad\qquad
  \begin{array}{*{7}{@{}c@{}}}
  \\[-2.2ex]
                 &         &x&           &        &         &\blu y \edgehoriz 1 y
  \\             &\edgeup 1& &\edgedown 1&        &\edgeup 1&
  \\ \nothing
     \edgehoriz 1
         \nothing&         & &           &\nothing&         &
  \\[1ex]
     &&\mko{T_0 = \{x\}\quad}&&&&\mko{\quad\blu S_0 = \{x\}}
  \\[-.2ex]
  \end{array}
  $$
  The other lattice path $\lambda' =
  011\,\hbox{---}\,021\,\hbox{---}\,{\blu 121} \in \Lambda(011,{\blu
  121})$ contributes $\frac12[1\ 1\ 0]$, as in the calculation
  of~$D^{001,{\blu 111}}$ in Example~\ref{e:full-sylvan-res}, to yield a
  total of $D^{011,{\blu 121}} = [\nf34\ \nf34\ 0]$ in the top row of
  $F_0 \from {\blu F_1}$.  Another exception, the calculation of
  $D^{101,{\blu 121}}$, does not follow from
  Example~\ref{e:full-sylvan-res} but is almost exactly the same as
  $\lambda = 011\,\hbox{---}\,021\,\hbox{---}\,{\blu 121}$ just
  computed; the only difference is that $y$ must replace the
  leftmost~$x$ in both fences.
  
  The fences comprising the map ${\blu F_1} \from {\red F_2}$ roughly
  follow the same outline as those in Example~\ref{e:full-sylvan-res}.
  For the reader attempting these calculations, it is useful as a check
  to note that $\Delta_{1,{\blu 111}-{\red 221}} = {\blu 2} \cdot 2
  \cdot {\red 1} = 4$, that each of the two paths in $\Lambda({\blu
  111},{\red 221})$ yields two fences, with each path contributing
  $[\nf14\ \nfn14\ \,0]^\top$ to the middle column.  Finally, given that
  $|{\red 221} - {\blu 121}| = 1$ and that $\Delta_{1,{\blu 121}-{\red
  221}} = {\blu 3} \cdot {\red 1} = 3$, the numbers of chain-link fences
  and their signs are readily deduced from the top middle sylvan matrix
  in~${\blu F_1} \from {\red F_2}$.
  \end{example}
}\end{excise}%
%

\section{Hedge splittings and Moore--Penrose pseudoinverses}\label{s:pseudo}

\begin{defn}\label{d:hedge-splitting}
Fix a linear map $\kk^m \stackrel\del\ffrom \kk^n$.  For any
hedge~$\st$, the \emph{hedge splitting}
$$%
  \del^+_\st: \kk^m \to \kk^n
$$
is defined by its action on the basis $\ol S \cup \del T$:
\begin{enumerate}
\item\label{i:non-stake}%
$\del^+_\st(\sigma) = 0$ for any non-stake~$\sigma \in \ol S$ and

\item\label{i:zeta}%
$\del^+_\st(\del\tau) = \tau$ for any face $\tau \in T$.
\end{enumerate}
\end{defn}

Hedge splittings are related to circuits, shrubs, and hedge rims as
follows.

\begin{prop}\label{p:hedge-splitting}
In the setting of Definition~\ref{d:hedge-splitting}, if $\tau$ is a
standard basis vector then
$$%
  \del^+_\st\del(\tau) = \1 - \zeta_T(\tau),
$$
where $\1$ is the identity on~$\kk^n$ and~$\zeta_T$ is the circuit
projection from Example~\ref{e:unique-vector}.
\end{prop}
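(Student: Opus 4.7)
The plan is to exploit the direct sum decomposition $\kk^n = \kk\{T\} \oplus \ker\del$ guaranteed by the shrubbery property of~$T$. Indeed, since $\del_T: \kk\{T\} \to B$ is an isomorphism, any element of $\kk^n$ has a unique expression as $r + z$ with $r \in \kk\{T\}$ and $z \in \ker\del$, which is precisely the setting of Lemma~\ref{l:alpha} with $A = \ker\del$ and $U = T$. By Example~\ref{e:unique-vector}, the second summand $z$ is nothing other than $\zeta_T$ applied to the element, so the identity to be proved is equivalent to $\del^+_\st\del(\tau) = r$ whenever $\tau = r + z$ with $r \in \kk\{T\}$ and $z \in \ker\del$.

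First I would verify that the hedge splitting $\del^+_\st$ is well defined by the rules in Definition~\ref{d:hedge-splitting}. The stake set property of~$S$ gives $\kk^m = \kk\{\ol S\} \oplus B$, because $\del_S: B \to \kk\{S\}$ being an isomorphism identifies $B$ as a complement of $\kk\{\ol S\}$ in~$\kk^m$. The shrubbery property of~$T$ gives that $\{\del\tau\}_{\tau \in T}$ is a basis of~$B$. Concatenating, $\ol S \cup \del T$ is a basis of $\kk^m$, so $\del^+_\st$ is uniquely determined by prescribing its values on this basis.

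Next I would compute both sides on a basis vector $\tau$ of $\kk^n$. Write $\tau = r + z$ as above. Applying $\del$ kills $z$, so $\del(\tau) = \del(r)$. Since $r = \sum_{t \in T} c_t\, t$ is a linear combination of the shrubbery elements, linearity and Definition~\ref{d:hedge-splitting}.\ref{i:zeta} yield
\begin{equation*}
  \del^+_\st\del(\tau) \;=\; \del^+_\st\!\Bigl(\sum_{t \in T} c_t\, \del t\Bigr) \;=\; \sum_{t \in T} c_t\, t \;=\; r \;=\; \tau - z \;=\; (\1 - \zeta_T)(\tau),
\end{equation*}
which is the desired identity.

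I do not anticipate a serious obstacle: the content of the proposition is essentially unpacking the definitions, and the only point requiring care is recognizing that the decomposition underlying $\zeta_T$ in Example~\ref{e:unique-vector} is the same decomposition that makes $\del^+_\st$ well defined. The clause of Definition~\ref{d:hedge-splitting}.\ref{i:non-stake} concerning non-stakes plays no role here because $\del(\tau) \in B$ and $\del^+_\st|_B$ is fully determined by clause~\ref{i:zeta}; that clause becomes relevant only in subsequent results that evaluate $\del^+_\st$ on elements of $\kk^m$ outside~$B$.
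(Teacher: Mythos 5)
Your proof is correct and follows essentially the same route as the paper's: both rest on the decomposition $\tau = r + z$ with $r = \tau - \zeta_T(\tau) \in \kk\{T\}$ and $z = \zeta_T(\tau) \in \ker\del$, after which $\del^+_\st\del$ fixes $r$ by Definition~\ref{d:hedge-splitting}.\ref{i:zeta} and kills $z$. The preliminary check that $\ol S \cup \del T$ is a basis of $\kk^m$ is a reasonable, if unstated-in-the-paper, sanity verification.
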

\begin{proof}
Since $\tau - \zeta_T(\tau)$ involves only basis vectors in~$T$, it is
fixed by~$\del^+_\st\del$.  Therefore
$$%
  \tau - \zeta_T(\tau)
  =
  \del^+_\st\del\bigl(\tau - \zeta_T(\tau)\bigr)
  = 
 \del^+_\st\del(\tau)
$$
for all standard basis vectors~$\tau \in \kk^n$ because $\zeta_T(\tau)
\in \ker\del$.
\end{proof}

\begin{prop}\label{p:chain-linked}
In the setting of Definition~\ref{d:hedge-splitting}, the shrub of any
stake~$\sigma \in S$ (the chain $s$ from~Lemma~\ref{l:chain-linked})
equals~$\del^+_\st(\sigma)$.
\end{prop}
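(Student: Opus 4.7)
The plan is to unwind the defining equation $s=(\del_S\circ\del_T)^{-1}\sigma$ from Lemma~\ref{l:chain-linked} and compare it with the two defining clauses of $\del_\st^+$. Recall that $\del_T\colon\kk\{T\}\to B$ is the restriction of $\del$ to $\kk\{T\}$, and $\del_S\colon B\to\kk\{S\}$ is the projection along $\kk\{\ol S\}$ restricted to $B$. Hence the equation $\del_S(\del_T s)=\sigma$ says precisely that
\[
  \del(s)=\sigma+r\quad\text{for some }r\in\kk\{\ol S\}.
\]
That is the only nontrivial input; everything else is formal.

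Next I would apply $\del_\st^+$ to both sides. On the right, clause~\ref{i:non-stake} of Definition~\ref{d:hedge-splitting} kills~$r$ because $r$ is a linear combination of non-stakes, so the right side becomes $\del_\st^+(\sigma)$. On the left, since $s\in\kk\{T\}$ is a linear combination of faces $\tau\in T$, clause~\ref{i:zeta} of Definition~\ref{d:hedge-splitting} (extended by linearity) gives $\del_\st^+(\del s)=s$. Equating the two yields $s=\del_\st^+(\sigma)$, as desired.

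There is no real obstacle: the argument is essentially a two-line consequence of the definitions, and it runs in the same spirit as the proof of Proposition~\ref{p:hedge-splitting}. The only place to be careful is to confirm that ``$\del(s)-\sigma\in\kk\{\ol S\}$'' follows from the definition of the isomorphism $\del_S$ as the composite of an injection with a projection; this is the content of the stake-set half of Definition~\ref{d:hedge}, and once it is stated the proposition is immediate.
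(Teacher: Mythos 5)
Your proof is correct and runs along essentially the same lines as the paper's: both apply clause~\ref{i:zeta} of Definition~\ref{d:hedge-splitting} (extended by linearity) to get $\del^+_\st\del(s)=s$, and both use that $\del s-\sigma$ lies in $\kk\{\ol S\}$ so that clause~\ref{i:non-stake} gives $\del^+_\st\del(s)=\del^+_\st(\sigma)$. The only cosmetic difference is that the paper cites Lemma~\ref{l:chain-linked} for the membership $\del s-\sigma\in\kk\{\ol S\}$, whereas you re-derive it directly from $s=(\del_S\circ\del_T)^{-1}\sigma$; that is the same content.
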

\begin{proof}
Definition~\ref{d:hedge-splitting} implies that $s =
\del^+_\st\del(s)$.  But $\sigma - \del s$ is a linear combination of
non-stakes by Lemma~\ref{l:chain-linked}, so $\del^+_\st\del(s) =
\del^+_\st(\sigma)$.
\end{proof}

\begin{lemma}\label{l:bS}
Fix a linear map $\kk^m \stackrel\del\ffrom \kk^n$, a stake set~$S$
for~$\del$, and a stake $\sigma \in S$.  The boundary of the shrub
$\del^+_\st(\sigma)$ of~$\sigma$ in the hedge $\st$ depends only
on~$S$ and~$\sigma$, not on the shrubbery~$T$.  More precisely,
$\del\del^+_\st(\sigma) = \beta_S(\sigma)$ is the boundary projection
of~$\sigma$.
\end{lemma}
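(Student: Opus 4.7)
The plan is to evaluate $\del\del^+_\st(\sigma)$ by first replacing $\del^+_\st(\sigma)$ with the shrub of $\sigma$ via Proposition~\ref{p:chain-linked}, and then recognizing the resulting boundary as $\beta_S(\sigma)$ through the uniqueness characterization in Example~\ref{e:unique-vector}. Because $\beta_S$ manifestly depends only on the stake set, this single identification will deliver both the shrubbery-independence assertion and the explicit formula in the lemma.

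I would begin by invoking Proposition~\ref{p:chain-linked} to write $\del^+_\st(\sigma) = s$, where $s \in \kk\{T\}$ is the shrub produced by Lemma~\ref{l:chain-linked}. The goal thus reduces to the identity $\del s = \beta_S(\sigma)$. From the defining property of the shrub, $\del s$ has coefficient $1$ on the stake $\sigma$ and coefficient $0$ on every other stake of~$S$; equivalently, $\del s - \sigma \in \kk\{\oS\}$. On the other hand $\del s \in B = \del(\kk^n)$ because $\del s$ is literally a boundary.

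To close the argument, I would appeal to Example~\ref{e:unique-vector} with $U = \oS$: its standing hypothesis $\kk^m = \kk\{\oS\} \oplus B$ is exactly the condition that $S$ be a stake set for~$\del$, and it characterizes $\beta_S(\sigma)$ as the unique element of $\sigma + \kk\{\oS\}$ that lies in~$B$. Since $\del s$ satisfies both $\del s \in B$ and $\del s \in \sigma + \kk\{\oS\}$, the uniqueness in this direct-sum decomposition forces $\del s = \beta_S(\sigma)$, and the right-hand side visibly involves only $S$ and~$\sigma$. I do not expect a real obstacle; the argument is a short dereference through the definitions, and the only substantive ingredient is the observation that the stake-set axiom is itself the direct-sum decomposition needed to invoke uniqueness.
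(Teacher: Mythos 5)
Your proof is correct, and it takes a slightly different path through the lemmas than the paper's. The paper's proof manipulates $\del^+_\st$ directly: it first observes that $\beta_S(\sigma)-\sigma\in\kk\{\oS\}$, so $\del^+_\st(\sigma)=\del^+_\st(\beta_S(\sigma))$ by Definition~\ref{d:hedge-splitting}.\ref{i:non-stake}; it then applies $\del$ and uses that $\del\del^+_\st$ fixes every element of $B=\del(\kk^n)$ (since $\del\del^+_\st(\del\tau)=\del\tau$ for $\tau\in T$ and $\del T$ spans~$B$). You instead dereference through Proposition~\ref{p:chain-linked} to replace $\del^+_\st(\sigma)$ by the shrub~$s$, then use the shrub's defining property from Lemma~\ref{l:chain-linked} to place $\del s$ in the coset $\sigma+\kk\{\oS\}$, and finally invoke the uniqueness in the direct-sum decomposition $\kk^m=\kk\{\oS\}\oplus B$ to conclude $\del s=\beta_S(\sigma)$. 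Both routes funnel through the same two facts encoded in Definition~\ref{d:hedge-splitting} — that $\del^+_\st$ kills $\kk\{\oS\}$ and inverts $\del$ on $\kk\{T\}$ — but the paper's version is a shorter operator-level computation, while yours is more element-based and makes the role of the shrub explicit, which is arguably friendlier to a reader still internalizing the horticultural dictionary. Your identification of the stake-set axiom with the direct-sum decomposition $\kk^m=\kk\{\oS\}\oplus B$ is exactly right and is the load-bearing observation; it matches the standing hypothesis of Lemma~\ref{l:alpha} as used in Example~\ref{e:unique-vector}.
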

\begin{proof}
$\del^+_\st(\sigma) = \del^+_\st\bigl(\beta_S(\sigma)\bigr)$ by
Definition~\ref{d:hedge-splitting}.\ref{i:non-stake}, since
$\beta_S(\sigma) \in \sigma + \kk\{\ol S\}$.  But $\beta_S(\sigma)$ is
fixed by $\del\del^+_\st$ because it lies in the image of~$\del$.
\end{proof}

\begin{defn}\label{d:shrub-boundary}
The element $\beta_S(\sigma) \in \del(\kk^n)$ in Lemma~\ref{l:bS} is
the \emph{shrub boundary} of~$\sigma$.
\end{defn}

\begin{defn}\label{d:moore-penrose}
The \emph{Moore--Penrose pseudoinverse} of a linear map $\kk^m
\stackrel\del\ffrom \kk^n$ over a subfield $\kk \subseteq \CC$ of the
complex numbers is the unique homomorphism $\del^+: \kk^m \to
\kk^n$~with
\begin{enumerate}
\item
$\del \del^+ \del = \del$
\item
$\del^+ \del \del^+ = \del^+$
\item\label{i:dd+}%
$(\del \del^+)^* = \del \del^+$
\item\label{i:d+d}%
$(\del^+ \del)^* = \del^+ \del$,
\end{enumerate}
where $^*$ is conjugate transpose.  When $\del$ is the differential of
a CW complex~$K$, the indices are such that $\del$ and~$\del^+$ pass
between two fixed homological stages, so $\del$ would mean
$C_{i-1}\stackrel{\begin{array}{c}\\[-4ex]\scriptstyle
\del_i\!\!\\[-1.5ex]\end{array}}\ffrom C_i$~and then~$\del^+$ would
mean $\del_i^+: C_{i-1} \to C_i$.
\end{defn}

\begin{thm}[{\cite[Theorem~1]{berg1986}}]\label{t:berg}
Fix a linear map $\CC^m \stackrel\del\ffrom \CC^n$ of complex vector
spaces.  The component~$x^+_j\!$ of the Moore--Penrose
solution~$\xx^+\!$ of\/ $\del\xx = \zz$ is expressed as a sum over all
size $r = \mathop{\mathrm{rank}}(\del)$ subsets $S \subseteq
\{1,\dots,m\}$ and $T \subseteq \{1,\dots,n\}$ with~$j \in T$:
$$%
  x^+_j
  =
  (\del^+\zz)_j
  =
  \frac{1\,}{\sum_{S,T}\left\vert\det\del_{S \times T}\right\vert^2}
  \sum_{\substack{|S|\,=\,r\\|T|\,=\,r\\j \in T}}
    \bigl\vert\det\ol\del_{S \times T}
    \det\bigl((\del_{S \times T})_j[\zz_S]\bigr)\bigr\vert,
$$
where the bar over $\del$ denotes complex conjugation and
\begin{itemize}
\item%
$\del_{S \times T}$ restricts~$\del$ to its submatrix with rows and
columns indexed by~$S$ and~$T$;
\item%
$\zz_S$ restricts the column vector~$\zz$ to its entries indexed
by~$S$; and
\item%
$(\del_{S \times T})_j[\zz_S]$ replaces column~$j$ of\/~$\del_{S
\times T}$ with~$\zz_S$.
\end{itemize}
\end{thm}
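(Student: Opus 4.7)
The plan is to identify the Moore--Penrose pseudoinverse with a weighted average of the hedge splittings from Definition~\ref{d:hedge-splitting} and then apply Cramer's rule to each splitting. Fix a hedge $\st = (S,T)$ with $|S| = |T| = r$, so that $\del_{S\times T}$ is invertible. Definition~\ref{d:hedge-splitting} forces $\del^+_\st\zz$ to vanish outside~$T$, and the identity $\beta_S(\zz)_S = \zz_S$ (since $\beta_S(\zz) \in \zz + \kk\{\ol S\}$) combined with Lemma~\ref{l:bS} shows that the $T$-supported vector $\yy = \del^+_\st\zz$ satisfies $\del_{S\times T}\yy_T = \zz_S$. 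Cramer's rule therefore gives
\begin{equation*}
  (\del^+_\st\zz)_j
  =
  \frac{\det\bigl((\del_{S\times T})_j[\zz_S]\bigr)}{\det(\del_{S\times T})}
  \quad\text{for } j \in T,
\end{equation*}
and $(\del^+_\st\zz)_j = 0$ for $j \notin T$. Granting for a moment the central identity $\del^+ = \bigl(\sum_\st |\det\del_{S\times T}|^2\bigr)^{-1} \sum_\st |\det\del_{S\times T}|^2\, \del^+_\st$, Berg's formula drops out immediately: the factor of $\det(\del_{S\times T})$ in the Cramer denominator combines with the weight $|\det\del_{S\times T}|^2 = \det(\del_{S\times T})\cdot\ol{\det(\del_{S\times T})}$ to leave $\ol{\det\del_{S\times T}}\cdot\det\bigl((\del_{S\times T})_j[\zz_S]\bigr) = \det\ol\del_{S\times T}\cdot\det\bigl((\del_{S\times T})_j[\zz_S]\bigr)$ per summand, matching the numerator in the stated formula (the absolute value bars around each summand are then a post-hoc convention, since over $\RR$ the displayed expression is real).

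So the task reduces to verifying the weighted-average identity for $P = \bigl(\sum_\st |\det\del_{S\times T}|^2\bigr)^{-1} \sum_\st |\det\del_{S\times T}|^2\, \del^+_\st$ by checking the four Moore--Penrose axioms from Definition~\ref{d:moore-penrose}. The conditions $\del P\del = \del$ and $P\del P = P$ are the easy half: Proposition~\ref{p:hedge-splitting} yields $\del^+_\st\del = \1 - \zeta_T$, so $\del\del^+_\st\del = \del$ for every individual hedge, and this identity survives convex-type averaging. The self-adjointness axioms $(\del P)^* = \del P$ and $(P\del)^* = P\del$ are the substantive half: individual maps $\del\del^+_\st$ are projections onto $\image\del$ but are rarely orthogonal projections, so self-adjointness of the average is what the specific weights $|\det\del_{S\times T}|^2$ must be engineered to produce.

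The main obstacle is proving these self-adjointness identities, and the cleanest route is via a full-rank factorization combined with Cauchy--Binet. Write $\del = BC$ with $B \colon \CC^r \into \CC^m$ of full column rank and $C \colon \CC^n \onto \CC^r$ of full row rank, so that $\del^+ = C^*(CC^*)^{-1}(B^*B)^{-1}B^*$ by a textbook calculation. Since $\del_{S\times T} = B_S\cdot C^T$, the weights factor as $|\det\del_{S\times T}|^2 = |\det B_S|^2 \cdot |\det C^T|^2$, and Cauchy--Binet gives $\sum_S |\det B_S|^2 = \det(B^*B)$ and $\sum_T |\det C^T|^2 = \det(CC^*)$, so the denominator of $P$ is $\det(B^*B)\det(CC^*)$. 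What remains is a double cofactor expansion: expanding $(B^*B)^{-1}B^*\zz$ via the adjugate and multilinearity along the $\zz$-column, and then expanding $C^*(CC^*)^{-1}(\,\cdot\,)$ via the adjugate and multilinearity along the column indexed by~$j$, reorganizes $\del^+\zz$ into exactly the sum over hedges with $j \in T$ that defines $P\zz$. The reindexing of this double cofactor expansion against pairs $(S,T)$ of full-rank submatrices is the tedious algebraic core of the proof, but it is a direct identity once one sets up the multi-linear expansions carefully.
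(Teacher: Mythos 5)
The paper does not prove this theorem---it is quoted from Berg's 1986 article and used as a black box, with no internal argument offered (only Remark~\ref{r:berg} observing that the sums may be restricted to hedges). Your proposal therefore supplies a standalone proof, and it reverses the paper's logical order: you would prove the Hedge Formula of Corollary~\ref{c:hedge} first and then extract Berg's determinantal expression by Cramer's rule, whereas the paper deduces the Hedge Formula \emph{from} Berg's theorem. Your first paragraph is correct: $\del^+_\st\zz$ is supported on~$T$, satisfies $\del_{S\times T}(\del^+_\st\zz)_T = \zz_S$, and Cramer's rule gives the stated ratio of determinants; and the Cauchy--Binet reduction of the denominator to $\det(B^*B)\det(CC^*)$ under a full-rank factorization $\del = BC$ is also right.

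There are genuine gaps. First, the claim that $P\del P = P$ ``survives convex-type averaging'' is wrong as stated: that axiom is \emph{quadratic} in~$P$, so $P\del P = \frac{1}{\Delta^2}\sum_{\st,\st'} w_\st w_{\st'}\,\del^+_\st\del\,\del^+_{\st'}$ does not reduce term by term to $\frac1\Delta\sum_\st w_\st\del^+_\st$ via the per-hedge identity $\del^+_\st\del\del^+_\st = \del^+_\st$. Only the linear condition $\del P\del = \del$ comes for free from averaging. (If you instead establish $P = C^*(CC^*)^{-1}(B^*B)^{-1}B^*$ directly, all four Penrose axioms follow at once, so the error is repairable---but the axiom-checking route as you sketched it is broken for axiom~2.) Second, and decisively, the ``double cofactor expansion'' in your third paragraph---expanding $(B^*B)^{-1}B^*\zz$ by Cauchy--Binet and Cramer into a $\sum_S$, then $C^*(CC^*)^{-1}(\,\cdot\,)$ into a $\sum_T$, and reconciling cross terms with the sum over hedges---is exactly the substance of the theorem, and you declare it tedious and defer it rather than carry it out. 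Finally, your derivation produces the numerator \emph{without} the absolute-value bars that appear in the stated display; calling them a ``post-hoc convention'' sidesteps rather than resolves the discrepancy, since as literally written the bars cannot be correct for all~$\zz$ (the left side $x^+_j$ is $\CC$-linear in~$\zz$ and may be negative or non-real, while a sum of absolute values is not). You should say plainly that the identity you prove is the one with the bars removed.
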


\begin{remark}\label{r:berg}
To be faithful to \cite{berg1986}, the sums in Theorem~\ref{t:berg}
are taken over arbitrary size~$r$ subsets.  This contrasts with most
of the summations in this paper, which restrict to summands where $S$
is a stake set and $T$ is a shrubbery.  However, Berg's sums might as
well be over hedges $ST$, as the determinant in every remaining term
vanishes; this is a simple but key point in the proof of the following
consequence.
\end{remark}

\begin{cor}[Hedge Formula]\label{c:hedge}
Fix a subfield\/~$\kk \subseteq \CC$ of the complex numbers.  The
Moore--Pen\-rose pseudoinverse of a linear map \smash{$\kk^m
\stackrel\del\ffrom \kk^n$} is a sum over~hedges~$ST$~for~$\del$:
$$%
  \del^+
  =
  \frac{1\,}{\sum_\st\left\vert\det\del_{S \times T}\right\vert^2}
  \sum_\st \left\vert\det\del_{S \times T}\right\vert^2\del^+_\st\,.
$$
\end{cor}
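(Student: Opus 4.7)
The plan is to derive the Hedge Formula directly from Berg's Theorem~\ref{t:berg}, reading each Cramer's-rule quotient in Berg's summation as an entry of a hedge splitting. Since every expression in sight is $\kk$-linear in the input $\zz \in \kk^m$, it suffices to verify the identity componentwise on $\del^+\zz$ for each $\zz$, and then to rearrange Berg's sum into the desired weighted average.

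First I fix a hedge $\st = (S,T)$ for $\del$, both of size $r = \mathop{\mathrm{rank}}(\del)$. Definitions~\ref{d:hedge}.\ref{i:shrubbery'} and~\ref{i:stake'} yield a direct-sum decomposition $\kk^m = \kk\{\ol S\} \oplus \del(\kk\{T\})$, so any $\zz \in \kk^m$ can be written uniquely as $\zz = \zz' + \sum_{\tau \in T} c_\tau\, \del\tau$ with $\zz' \in \kk\{\ol S\}$. Projecting onto the $S$-coordinates kills $\zz'$ and gives $\zz_S = \del_{S \times T}\,\cc$, so Cramer's rule yields $c_\tau = \det\bigl((\del_{S \times T})_\tau[\zz_S]\bigr)/\det\del_{S \times T}$ for each $\tau \in T$. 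By Definition~\ref{d:hedge-splitting}, $\del^+_\st(\zz) = \sum_{\tau \in T} c_\tau\,\tau$; hence the $j$-th coordinate of $\del^+_\st(\zz)$ equals $c_j$ when $j \in T$ and vanishes when $j \notin T$.

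Substituting into Berg's formula, the identity $\det\ol{\del_{S \times T}}\cdot\det\del_{S \times T} = |\det\del_{S \times T}|^2$ rewrites each summand as $|\det\del_{S \times T}|^2\cdot(\del^+_\st(\zz))_j$. The restriction $j \in T$ in Berg's sum becomes automatic, since the contribution with $j \notin T$ is already zero. Moreover, as recorded in Remark~\ref{r:berg}, any size-$r$ pair $(S,T)$ for which $\del_{S \times T}$ is singular contributes zero via the weight $|\det\del_{S \times T}|^2$, so Berg's sum over all size-$r$ pairs equals the sum over hedges. Collecting these observations produces $(\del^+\zz)_j = \bigl(\sum_\st |\det\del_{S \times T}|^2\bigr)^{-1}\sum_\st |\det\del_{S \times T}|^2\,(\del^+_\st(\zz))_j$ for every $j$ and $\zz$, which is exactly the Hedge Formula on coordinates.

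The main conceptual step, I expect, is the Cramer's-rule identification: one must decompose $\zz$ in the hedge's preferred (non-orthonormal) basis $\ol S \cup \del(T)$ and observe that Berg's combinatorial quotient is precisely the $T$-coefficient in that decomposition. Once this translation is pinned down, the rest is the routine bookkeeping of dropping the constraints ``$j \in T$'' and ``$(S,T)$ a hedge'' from Berg's sum, both of which are automatic from vanishing of the corresponding summands.
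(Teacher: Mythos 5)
Your argument is correct and follows the paper's overall strategy---identify each of Berg's Cramer-type quotients as the corresponding entry of a hedge splitting, then observe via Remark~\ref{r:berg} that the sum over all size-$r$ pairs collapses to a sum over hedges---but your verification of the key per-hedge identity is carried out differently and, I think, more cleanly. The paper checks the identity
$$
  \det\ol{\del}_{S\times T}\,
  \det\bigl((\del_{S\times T})_j[\zz_S]\bigr)
  =
  \bigl\vert\det\del_{S\times T}\bigr\vert^2\,
  \bigl(\del^+_\st\zz\bigr)_j
$$
one basis vector at a time on the basis $\ol S \cup \del T$ of~$\kk^m$ (the three cases $\zz\in\ol S$, $\zz=\del\tau$ for $\tau\in T$, and the remark that outputs are supported on~$T$), whereas you decompose a general $\zz$ as $\zz' + \sum_{\tau\in T}c_\tau\,\del\tau$ with $\zz'\in\kk\{\ol S\}$, read off $\zz_S = \del_{S\times T}\,\cc$, and invoke Cramer's rule to get $c_j$ in closed form. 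That single Cramer computation handles all three of the paper's cases simultaneously and makes the appearance of $\det\bigl((\del_{S\times T})_j[\zz_S]\bigr)$ transparent rather than something to be pattern-matched. Both your proof and the paper's implicitly drop the absolute value bars around Berg's numerator, which is consistent since the signed product is what the rest of the argument uses.

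The one thing you omit is the last sentence of the paper's proof: Berg's theorem is stated over~$\CC$, and the Hedge Formula is claimed for an arbitrary subfield $\kk \subseteq \CC$. Your derivation as written establishes the identity over~$\CC$; to descend to~$\kk$ one still needs the observation (handled in the paper by flatness of the scalar extension $\kk\into\CC$) that the Moore--Penrose pseudoinverse of a $\kk$-matrix, characterized by the four axioms, coincides with the base change of the $\CC$-pseudoinverse and that the right-hand side is likewise defined over~$\kk$. This is a short remark rather than a serious gap, but since the corollary explicitly quantifies over subfields of~$\CC$, it should appear.
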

\begin{proof}
First assume $\kk = \CC$.  By Remark~\ref{r:berg}, the summations in
Berg's formula (Theorem~\ref{t:berg}) can be taken over all
hedges~$ST$ for~$\del$.  Fix a hedge~$\st$.

If $\zz \in \oS$, then $\zz_S = 0$, so the summand contributed
by~$\st$ in Berg's formula vanishes because $(\del_{S \times
T})_j[\zz_S] = 0$, which agrees with
Definition~\ref{d:hedge-splitting}.\ref{i:non-stake}.  For
Definition~\ref{d:hedge-splitting}.\ref{i:zeta}, let $\zz = \del\tau$
for some $\tau \in T$, so $\del_\st^+ \del\tau = \tau$.  If $\tau \in
S$, then in the sum over~$j \in T$ of Berg's formula, with $\st$
fixed, the second determinant is $\det\del_{S \times T}$ when $\tau$
replaces itself and otherwise yields a matrix with repeated columns
and hence vanishing determinant.  If $\tau \not\in S$ then the
replacement always yields repeated columns and hence determinant~$0$.

If~$\zz \in S$ then the coefficient of $\tau \not\in T$ in
$\del_\st^+\zz$ vanishes for all~$\zz$, because $\del_\st^+ \sigma =
0$ for all \smash{$\sigma \in \oS$} and $\del_\st^+\del\tau = \tau$
for all $\tau \in T$.  Therefore the sum can be taken over all hedges
$\st$, not just those that include a certain vector in the
shrubbery~$T$.

For subfields of~$\CC$, the formula for $\del^+$ is defined and has
the claimed properties after tensoring with~$\CC$.  By flatness it
must have had those properties before tensoring.
\end{proof}

\begin{prop}\label{p:hedge}
If\/ $\ZZ^m \stackrel\del\ffrom \ZZ^n$, then over any field\/~$\kk$ in
which the denominator $\sum_\st\det(\del_{S \times T})^2$
and the order of the torsion subgroup of\/~$\ZZ^m/\del(\ZZ^n)$ are
invertible, the summation formula for~$\del^+$ defines a splitting
$\del^+_\kk$ of the
map $\kk^m \stackrel{\smash{\del_\kk}}\ffrom \kk^n$ induced
by~$\mbox{}\otimes \kk$.
\end{prop}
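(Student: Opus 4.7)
The plan is to clear denominators in the Hedge Formula (Corollary~\ref{c:hedge}) to obtain an identity of integer matrices, then base-change to~$\kk$. Set $D = \sum_\st \det(\del_{S\times T})^2 \in \ZZ_{>0}$ and $M = \sum_\st \det(\del_{S\times T})^2\,\del^+_\st$, where both sums run over $\QQ$-hedges for $\del_\QQ\colon\QQ^n\to\QQ^m$. The first step is to show that $M$ has integer entries and, more delicately, that each summand $\det(\del_{S\times T})^2\,\del^+_\st$ is divisible in $\hhom_\ZZ(\ZZ^m,\ZZ^n)$ by $\det(\del_{S\times T})$. Unwinding Definition~\ref{d:hedge-splitting}, the splitting $\del^+_\st$ kills non-stakes and sends $\del\tau$ back to~$\tau$ for $\tau\in T$, so its matrix in the standard bases comes from inverting the submatrix $\del_{S\times T}$; Cramer's rule then forces each entry of $\del^+_\st$ to be a rational number whose denominator divides $\det(\del_{S\times T})$, which yields both claims after multiplying by~$\det(\del_{S\times T})^2$.

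Next, feed this into the first Moore--Penrose condition $\del_\QQ\,\del^+_\QQ\,\del_\QQ = \del_\QQ$ together with the formula $\del^+_\QQ = M/D$ from Corollary~\ref{c:hedge}: multiplying through by~$D$ produces the rational identity $\del\,M\,\del = D\,\del$, and since both sides are integer-valued (the left by the previous step), this is an identity in $\hhom_\ZZ(\ZZ^n,\ZZ^m)$. Tensoring with~$\kk$ gives $\del_\kk\,M_\kk\,\del_\kk = D\,\del_\kk$, and invertibility of~$D$ in~$\kk$ permits defining $\del^+_\kk := D^{-1}M_\kk$, which then satisfies $\del_\kk\,\del^+_\kk\,\del_\kk = \del_\kk$ and so splits $\del_\kk$ in the $\{1\}$-inverse sense of Definition~\ref{d:hedge-splitting}.

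Finally, one identifies this $\del^+_\kk$ with the summation formula evaluated directly over~$\kk$, where the sum is indexed by $\kk$-hedges (those $(S,T)$ with $\det(\del_{S\times T})$ a unit in~$\kk$). Here the torsion hypothesis plays its role: a standard invariant-factor computation shows the gcd of $\det(\del_{S\times T})$ across all $\QQ$-hedges equals the order of the torsion subgroup of $\ZZ^m/\del(\ZZ^n)$, so its invertibility in~$\kk$ guarantees at least one $\kk$-hedge exists. The divisibility from the first step then ensures that every $\QQ$-hedge whose determinant degenerates over~$\kk$ contributes zero to $M_\kk$ and to $D_\kk$, so only the genuine $\kk$-hedges survive in the reduction, reproducing the hedge formula interpreted over~$\kk$. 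The main obstacle is precisely this divisibility in step one: without the fact that $\det(\del_{S\times T})$ itself (not just its square) divides each summand inside $\hhom_\ZZ(\ZZ^m,\ZZ^n)$, degenerate $\QQ$-hedges would leave nontrivial residues modulo the characteristic, and the reduction of the $\QQ$-formula would fail to agree with its $\kk$-analogue.
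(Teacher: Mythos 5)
Your approach is genuinely different from the paper's and, in its essentials, works. The paper's proof never clears denominators: it factors the Moore--Penrose pseudoinverse over $\QQ$ as $\del^+ = \iota\circ\phi\circ\pi_B$ (orthogonal projection onto the image $B$, then the isomorphism $B\simto(\ker\del)^\perp$, then inclusion), localizes $\ZZ$ to a ring~$R$ where the denominator and the torsion order are units, and shows separately that $\pi_B$ remains a surjection $R^m\onto B^R$ (using the torsion hypothesis to identify $B\cap R^m$ with the direct summand $B^\ZZ\otimes R$) and that the isomorphism $B\simto K^\perp$ restricts and splits over~$R$; all of this then persists under any base change. Your route instead treats the Hedge Formula as the primary object: set $M=\sum_\st\det(\del_{S\times T})^2\del^+_\st$ and $D=\sum_\st\det(\del_{S\times T})^2$, use Cramer's rule to show each summand lies in $\det(\del_{S\times T})\cdot\hhom_\ZZ(\ZZ^m,\ZZ^n)$, hence $M$ is an integer matrix, convert the Moore--Penrose identities over $\QQ$ into integer matrix identities $\del M\del = D\del$ and (you should also state this) $M\del M = D M$, and reduce modulo the characteristic. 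This is cleaner in one respect---it avoids the localization $R$ and the summand argument entirely, and your refinement that each summand is divisible by $\det(\del_{S\times T})$ explains transparently why degenerate hedges drop out over~$\kk$---while the paper's route makes the geometric content (projection, orthogonal complement, purity of the image lattice) visible and extends verbatim to Proposition~\ref{p:projection-hedge}.

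Two things to fix. First, you only verify $\del_\kk\del^+_\kk\del_\kk=\del_\kk$; the usage in Theorem~\ref{t:choice-of-splitting} needs $\del^+_\kk\del_\kk\del^+_\kk=\del^+_\kk$ as well, which follows by the identical mechanism from the integer identity $M\del M=DM$ (obtained by multiplying $\del^+\del\del^+=\del^+$ through by~$D^2$). Your phrase ``the $\{1\}$-inverse sense of Definition~\ref{d:hedge-splitting}'' is also a misattribution: that definition constructs a particular map from a hedge and does not define a class of generalized inverses; the relevant pair of conditions appears in Definition~\ref{d:vertically-split} and Theorem~\ref{t:choice-of-splitting}. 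Second, your closing discussion is slightly circular about the role of the torsion hypothesis. Since the torsion order~$t$ divides every $\det(\del_{S\times T})$, $t^2$ divides~$D$, so invertibility of~$D$ already forces invertibility of~$t$ and hence guarantees that $\del_\kk$ has full rank~$r$; the torsion hypothesis is what makes the \emph{set} of $\kk$-hedges coincide with the $\QQ$-hedges that survive reduction, which is exactly what lets you interpret ``the summation formula over~$\kk$'' as literally summing over $\kk$-hedges. Stating that dependency plainly (torsion invertible $\Rightarrow$ $\mathrm{rank}\,\del_\kk=r$ $\Rightarrow$ $\kk$-hedges are the non-degenerate $\QQ$-hedges) would tighten the last paragraph.
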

\begin{proof}
Over the rationals~$\QQ$, the Moore--Penrose pseudoinverse of~$\del$
is an orthogonal projection $\pi_B: \QQ^m \onto B$ followed by an
isomorphism $B \simto K^\perp$ of the image~$B = \del_\QQ(\QQ^m)$ with
the orthogonal complement in~$\QQ^n$ of the kernel $K = \ker(B \otno
\QQ^n)$.

View the inclusion $B^\ZZ = \del(\ZZ^n) \subseteq \ZZ^m$ as taking
place inside of~$\QQ^m$, which contains~$B$ as well as~$B^\ZZ$
and~$\ZZ^m$.  Let $R$ be the localization of~$\ZZ$ by inverting the
denominator and the torsion order.  Then $\pi_B(R^m)$ lies in $B \cap
R^m$ because the denominator is inverted.  But because the torsion
order is inverted, $B \cap R^m = B^\ZZ \otimes R$, a direct
summand~$B^R$ of~$R^m$.  Since $\pi_B$ fixes~$B$ and hence~$B^R$, it
follows that $\pi_B(R^m) = B^R$.  This surjectivity of $\pi_B^R: R^m
\onto B^R$ persists modulo any prime of~$R$, by right-exactness of
tensor~products, and subsequently under any extension of scalars.

The isomorphism $B \simto K^\perp \subseteq \QQ^n$ over~$\QQ$
restricts to an isomorphism of~$B^R$ with a subgroup of~$R^n$ because
it is split by~$\del_\QQ$.  This splitting is preserved by arbitrary
tensor products, including quotients modulo primes and subsequent
extension of scalars.
\end{proof}

\begin{cor}[Projection Hedge Formula]\label{c:projection-hedge}
Fix a subfield\/~$\kk \subseteq \CC$ and a subspace $A \subseteq
\kk^\ell$.  Using $\alpha_U:\nolinebreak \kk^\ell \onto A$ from
Lemma~\ref{l:alpha}, the orthogonal projection $\pi_A:\kk^\ell \onto
A$~is
$$%
  \pi_A
  =
  \frac{1\,}{\sum_U\left\vert\det\eta_U\right\vert^2}
  \sum_U \left\vert\det\eta_U\right\vert^2 \alpha_U,
$$
where the sums are equivalently over all
\begin{enumerate}
\item%
shrubberies $T = U$ for $\kk^\ell/A \!\otno\! \kk^\ell$, in which case
$\eta_U = \del_T$ (Definition~\ref{d:hedge}.\ref{i:shrubbery'}),~or
\item%
stake sets $S = \oU$ for $\kk^\ell \!\otni\! A$, in which case $\eta_U
= \del_S$ (\mbox{Definition}~\ref{d:hedge}.\ref{i:stake'}),
\end{enumerate}
and the determinants are calculated using $U$ along with any basis
of\/ $\kk^\ell/A$ or~$A$.
\end{cor}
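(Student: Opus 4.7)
The plan is to realize $\pi_A$ as $\del\del^+$ or as $\1 - \pi^+\pi$ for a suitable linear map and invoke the Hedge Formula (Corollary~\ref{c:hedge}) directly. For case~(2), I would choose any surjection $\del\colon \kk^n \onto A \subseteq \kk^\ell$. The composite $\del\del^+$ is self-adjoint (by Definition~\ref{d:moore-penrose}.\ref{i:dd+}), idempotent, and has image precisely~$A$ on which it acts as the identity, so $\del\del^+ = \pi_A$. The Hedge Formula then expresses
$$
  \pi_A
  =
  \frac{1}{\sum_\st |\det \del_{S\times T}|^2}\sum_\st |\det \del_{S\times T}|^2\,\del\del^+_\st,
$$
and Lemma~\ref{l:bS} together with Definition~\ref{d:hedge-splitting}.\ref{i:non-stake} identifies $\del\del^+_\st = \beta_S = \alpha_{\oS}$, depending only on~$S$. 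Factoring $\del_{S\times T}$ as the composite $\kk\{T\} \to A \to \kk\{S\}$ of three isomorphisms and using any fixed basis of~$A$ to split $|\det\del_{S\times T}|^2 = |\det\del_T|^2\,|\det\del_S|^2$, the hedge sum separates into independent sums over stake sets~$S$ and shrubberies~$T$; the $T$-sum cancels from numerator and denominator, and setting $U = \oS$ yields the case~(2) formula.

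For case~(1), I would instead apply the Hedge Formula to the quotient map $\pi\colon \kk^\ell \onto \kk^\ell/A$ (after fixing any basis of~$\kk^\ell/A$). Surjectivity of~$\pi$ forces its only stake set to be the full basis of~$\kk^\ell/A$, so hedges for~$\pi$ are indexed by shrubberies $T = U$ for $\kk^\ell/A \otno \kk^\ell$ alone, with $|\det \pi_{S\times T}|^2 = |\det \eta_U|^2$. The hedge splitting~$\pi^+_\st$ satisfies $\pi\pi^+_\st = \1$, so $\pi^+_\st\pi$ projects~$\kk^\ell$ onto~$\kk\{T\}$ along~$A$ and hence $\1 - \pi^+_\st\pi = \alpha_T$. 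Since $\pi^+\pi$ is the orthogonal projection onto $A^\perp = (\ker\pi)^\perp$, we have $\pi_A = \1 - \pi^+\pi$, so the Hedge Formula yields
$$
  \pi_A
  =
  \1 - \frac{\sum_T|\det\eta_T|^2(\1 - \alpha_T)}{\sum_T|\det\eta_T|^2}
  =
  \frac{\sum_T|\det\eta_T|^2\alpha_T}{\sum_T|\det\eta_T|^2},
$$
which is the case~(1) formula. The two weighted averages then coincide tautologically, each computing~$\pi_A$.

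The main obstacle is the bookkeeping with basis-dependent determinants: $|\det\del_S|^2$ and $|\det\del_T|^2$ depend on the chosen basis of~$A$, and $|\det\eta_T|^2$ on the basis of~$\kk^\ell/A$. However, each basis change multiplies every summand---in numerator and denominator alike---by the same global factor, which cancels in the ratio, so the formula is intrinsic. Beyond that, the argument is a direct assembly of the Hedge Formula with Lemma~\ref{l:bS} and the direct-sum decomposition $\kk^\ell = \kk\{U\} \oplus A$ that underlies both hedge conditions.
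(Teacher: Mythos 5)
Your proof is correct, and case~(2) matches the paper's argument almost exactly: pick a surjection onto~$A$, left-multiply the Hedge Formula by~$\del$, invoke Lemma~\ref{l:bS} to see $\del\del^+_\st = \beta_S$, and factor $\lvert\det\del_{S\times T}\rvert^2 = \lvert\det\del_S\rvert^2\lvert\det\del_T\rvert^2$ so that the $T$-sum cancels. Where you diverge is in how you handle case~(1). The paper never re-applies the Hedge Formula to a second map: it deduces the shrubbery formulation from the stake-set one by observing that a shrubbery~$T$ for $\kk^\ell/A \otno \kk^\ell$ corresponds bijectively to the stake set~$\oT$ for $\kk^\ell \otni A$, and then runs an exterior-power argument (using $\bigwedge^d \kk^S \otimes \bigwedge^{\ell-d}\kk^\oS = \bigwedge^\ell\kk^\ell = \bigwedge^d A \otimes \bigwedge^{\ell-d}\kk^\ell/A$) to show that the ratio $\det\del_T / \det\del_S$ is \emph{independent of~$U$}, so the two weight functions differ by a single global scalar that cancels. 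You instead apply the Hedge Formula a second time to the quotient $\pi\colon\kk^\ell\onto\kk^\ell/A$, noting that surjectivity forces the unique stake set to be the whole target basis, that $\1-\pi^+_\st\pi = \zeta_T = \alpha_T$, and that $\1-\pi^+\pi$ is the orthogonal projection onto $(\ker\pi)^\perp{}^{\perp} = A$. Your route is arguably cleaner and avoids the wedge-product bookkeeping entirely; what the paper's route buys instead is the \emph{termwise} proportionality of the two weighted sums (not just equality of their totals), which is a stronger structural statement even if it is not needed to conclude the corollary. Both arguments are valid.
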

\begin{proof}
The formula does not depend on the basis of~$A$ because the
change-of-basis determinant multiplies the numerator and the
denominator equally.

The equivalence of the shrubbery and stake set formulations is
plausible because a shrubbery~$T$ for the surjection $\kk^\ell/A \otno
\kk^\ell$ is defined by the condition $\kk^\ell = A \oplus \kk^T$
while a stake set~$S$ for the injection $\kk^\ell \otni A$ is defined
by the condition \smash{$\kk^\ell = \kk^\oS \oplus A$}.  Thus each
shrubbery~$T$ uniquely corresponds to a stake set~$\oT$.  While it it
not necessarily the case that $\det\del_T = \det \del_S$ for $U = T =
\oS$, the ratios between these determinants are constant as $U$
varies, because $\bigwedge^d \kk^S \otimes \bigwedge^{\ell-d} \kk^\oS
= \bigwedge^\ell \kk^\ell = \bigwedge^d A \otimes \bigwedge^{\ell-d}
\kk^\ell/A$, where $d = |S|$, while the maps $\bigwedge^d \kk^S \from
\bigwedge^d A$ and $\bigwedge^{\ell-d} \kk^\oS \to \bigwedge^{\ell-d}
\kk^\ell/A$ go in opposite~directions.

To prove the stake set formulation, set $\ell = m$ and choose any
linear transformation \smash{$\kk^m \stackrel\del\ffrom \kk^n$} with
image $A = B$.  Multiply the Hedge Formula (Corollary~\ref{c:hedge})
on the left by~$\del$.  This yields left-hand side and $\pi_B = \del
\del^+$ and, by Lemma~\ref{l:bS}, right-hand side
\begin{align*}
\frac{1\,}{\sum_\st\left\vert\det\del_{S \times T}\right\vert^2}
\sum_\st \left\vert\det\del_{S \times T}\right\vert^2\del\del^+_\st
   &=  \frac{1\,}{\sum_\st\left\vert\det\del_{S \times T}\right\vert^2}
       \sum_\st \left\vert\det\del_{S \times T}\right\vert^2\beta_S.
\end{align*}
Using any basis of~$A$ to compute determinants $\del\del_T$
and~$\det\del_S$, the fact that $\del_{S \times T} = \del_S \circ
\del_T$ implies that $\det(\del_{S \times T})^2 =
\det(\del_S)^2\det(\del_T)^2$.  Therefore a~factor of
$\sum_T\left\vert\det\del_T\right\vert^2$ pulls out of the numerator
as well as the denominator, leaving the desired sum.
\end{proof}

\begin{prop}\label{p:projection-hedge}
Fix an integral structure $B^\ZZ \to B$ (Definition~\ref{d:dets}) on a
surjection, injection, or based linear map~$\del$ as in
Definition~\ref{d:hedge}.  If the order of the torsion subgroup
of\/~$\ZZ^\ell/B^\ZZ$ and denominator $\sum_U\det(\del_U)^2$ are
invertible in~$\kk$, then the formula for~$\pi_A$ in
Corollary~\ref{c:projection-hedge} defines a surjection $\pi_A:
\kk^\ell \onto A$ that splits the inclusion $\kk^\ell \otni A$~when
\begin{itemize}
\item%
$\ell = n$ and \smash{$A = \ker(B
\stackrel{\begin{array}{c}\scriptstyle\:\del\\[-1ex]\end{array}}\fillotno
\kk^n)$}, the sum is interpreted as being over shrubberies $T = U$ (so
$\del_U$ is the isomorphism~$\del_T$ from Definition~\ref{d:hedge}),
and $\alpha_U = \zeta_T$; or
\item%
$\ell = m$ and $A = B$, the sum is interpreted as being over stake
sets $S = \oU$ (so $\del_U$ is the isomorphism~$\del_S$ from
Definition~\ref{d:hedge}), and $\alpha_U = \beta_S$.
\end{itemize}
\end{prop}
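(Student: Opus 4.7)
The plan is to extend Corollary~\ref{c:projection-hedge} from subfields of~$\CC$ to the general field~$\kk$ by a localization argument directly modeled on the proof of Proposition~\ref{p:hedge}. Let $R \subseteq \QQ$ denote the localization of~$\ZZ$ inverting both $\sum_U \det(\del_U)^2$ and the order of the torsion subgroup of $\ZZ^\ell/B^\ZZ$; the hypotheses make $\kk$ an $R$-algebra, so it suffices to produce an $R$-linear surjection $\pi_A^R \colon R^\ell \twoheadrightarrow A^R$ splitting the inclusion of the $R$-integral form~$A^R$ of~$A$. Here $A^R = A^\ZZ \otimes R$, with $A^\ZZ$ the appropriate integral form of~$A$ (the kernel of $\ZZ^n \to B^\ZZ$ in the first case, or $B^\ZZ$ itself in the second). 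By Corollary~\ref{c:projection-hedge}, the formula agrees over $\QQ$ with the orthogonal projection $\pi_A \colon \QQ^\ell \twoheadrightarrow A$.

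The first task is integrality: each summand $\det(\del_U)^2 \alpha_U$ is an $R$-linear endomorphism of~$R^\ell$. Indeed, Lemma~\ref{l:alpha} together with Cramer's rule writes $\alpha_U(\rho) = \rho - \det(\del_U)^{-1}\mathrm{adj}(\del_U)(\bar\rho)$, where $\bar\rho$ is the image of $\rho$ under the relevant projection, so $\det(\del_U)\,\alpha_U$ is already a polynomial in integer matrix entries. Summing the $\det(\del_U)^2\alpha_U$'s and dividing by the unit $\sum_U \det(\del_U)^2$ produces an $R$-linear map $\pi_A^R \colon R^\ell \to R^\ell$. Since $\pi_A^R \otimes \QQ = \pi_A$, it follows that $\pi_A^R(R^\ell) \subseteq A \cap R^\ell$. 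The torsion hypothesis is precisely what guarantees $A \cap R^\ell = A^R$ as a direct $R$-summand of~$R^\ell$: inverting the torsion order kills all torsion in $R^\ell/A^R$, leaving an $R$-free quotient.

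Each $\alpha_U$ fixes~$A$ pointwise, for if $\rho \in A$ then the unique $r \in \kk\{U\}$ with $\rho - r \in A$ is~$r = 0$; since the weights in the formula sum to~$1$, the map $\pi_A^R$ also fixes $A^R$. Combined with $\pi_A^R(R^\ell) \subseteq A^R$, this forces $\pi_A^R \colon R^\ell \twoheadrightarrow A^R$ to be a surjection splitting the inclusion, and both properties persist under $\mbox{}\otimes_R \kk$ by right-exactness of tensor product, yielding the desired $\pi_A \colon \kk^\ell \twoheadrightarrow A$. The equivalence of the shrubbery and stake-set formulations of the sum is the exterior-algebra duality already observed in the proof of Corollary~\ref{c:projection-hedge} and needs no reworking. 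The main obstacle is bookkeeping: setting up the integral form $A^R$ consistently with the given integral structure $B^\ZZ \to B$ in the kernel and image cases, so that the single torsion hypothesis in each case really does force $A^R \subseteq R^\ell$ to be a direct summand.
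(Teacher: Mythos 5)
Your proposal is correct and follows the same localization route as the paper's own proof, which simply defers to the second paragraph of the proof of Proposition~\ref{p:hedge} (``mutatis mutandis'').  The details you supply---Cramer's rule to get integrality of $\det(\del_U)\,\alpha_U$, the observation that each $\alpha_U$ fixes~$A$ pointwise so the weighted average does too, and the bookkeeping of the integral form~$A^R$ in the kernel and image cases---are exactly what the paper leaves implicit.
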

\begin{proof}
Argue as in the second paragraph of the proof of
Proposition~\ref{p:hedge}, mutatis mutandis.  All the integral
structure does is to make the determinants well defined.
\end{proof}

\section{Koszul bicomplexes}\label{s:koszul}

\begin{conv}[Koszul complex notation]\label{conv:koszul}
Let $V$ be a vector space over~$\kk$ of dimension~$n$ that is
$\NN^n$-graded to have one basis vector $z_1,\dots,z_n$ in each of the
degrees~$\ee_1,\dots,\ee_n$ of the variables~$x_1,\dots,x_n$ of the
polynomial ring~$\kk[\xx]$.
The Koszul complexes on the variables~$\xx$ and on variables $\yy =
y_1,\dots,y_n$ are denoted by
$$%
\textstyle
  \KK^\xx_\spot
  =
  \bigwedge^{\!\spot\!} V \otimes \kk[\xx]
\quad\text{and}\quad
  \KK^\yy_\spot
  =
  \bigwedge^{\!\spot\!} V \otimes \kk[\yy]
$$
with their usual $\NN^n$-graded differentials.  Thus, for example, the
degree~$\bb$ differential
$$%
  (\KK^\yy_{i-1})_\bb
  =
  \bigoplus_{|\sigma| = i-1} \zz^\sigma \otimes \yy^{\bb-\sigma}
  \ \ \from\ \
  \bigoplus_{|\sigma| = i} \zz^\sigma \otimes \yy^{\bb-\sigma}
  =
  (\KK^\yy_i)_\bb
$$
is induced by the $\NN^n$-graded $\kk[\yy]$-linear map $\kk[\yy] \from
V \otimes \kk[\yy]$ that sends $y_j \mapsfrom z_j \otimes 1$:
$$%
  \sum_{j \in \sigma} \pm\,\zz^{\sigma-\ee_j} \otimes \yy^{\bb+\ee_j-\sigma}
  \ \ \mapsfrom\ \
  \zz^\sigma \otimes \yy^{\bb-\sigma}.
$$
The symbol $\zz^\sigma$ denotes the exterior product of the basis
vectors of~$V$ indexed by the simplex $\sigma \subseteq
\{1,\dots,n\}$, which is also identified with its characteristic
vector~in~$\{0,1\}^n$.  The monomial~$\zz^\sigma$ is a
$\kk[\xx]$-basis for a rank~$1$ free $\NN^n$-graded
summand~$\KK^\xx_\sigma$ of~$\KK^\xx_{|\sigma|}$, and similarly
for~$\KK^\yy_\sigma$.  For any $\kk[\xx]$-module~$M$, write $M^\yy$
for the corresponding $\kk[\yy]$-module, and let $M^{-\yy}$ be the
same module but where each variable $y_j$ acts by~$-y_j$.
\end{conv}

\begin{defn}\label{d:KK}
Using equality signs for natural isomorphisms of modules over the ring
$\kk[\xx,\yy] = \kk[\xx] \otimes \kk[\yy]$, the \emph{Koszul
bicomplex} $\KK_\sspot$ is equivalently
$$%
\begin{array}{rcccl}
  \kk[\xx] \otimes \KK_\spot^\yy 
& \!\!=\!\!
& \kk[\xx] \otimes \bigwedge^{\!\spot\!} V \otimes \kk[\yy]
& \!\!=\!\!
& \KK_\spot^\xx \otimes \kk[\yy]
\\
& 
& \verteq
& 
& 
\\
& 
& \KK^{\xx+\yy}_\spot
& 
& 
\\
\end{array}
$$
with \raisebox{2.1ex}{\parbox[t]{.9\linewidth}{
\begin{itemize}
\item%
\emph{horizontal differential} induced by $\KK^\xx_\spot$
\item%
\emph{vertical differential} induced by $\KK^\yy_\spot$ and
\item%
\emph{total differential} induced by $\KK^{\xx+\yy}_\spot$
\end{itemize}}}\\
where $\xx + \yy = x_1 + y_1, \dots, x_n + y_n$ lies in
$\kk[\xx,\yy]$.  For any $\kk[\xx]$-module~$M$, write
$$%
\KK_\sspot(M) = \KK_\sspot \otimes_{\kk[\yy]} M^{-\yy}
$$
for the \emph{Koszul bicomplex of~$M$}.
\end{defn}


\begin{prop}\label{p:tot-KK}
The total complex $\KK^{\xx+\yy}_\spot(M)$ is a $\kk[\xx]$-free
resolution of any $\kk[\xx]$-module~$M$ as a module
over~$\kk[\xx,\yy]$ on which $y_j$ acts as~$-x_j$ for $j = 1,\dots,n$.
\end{prop}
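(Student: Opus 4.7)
The plan is to reduce the statement to the classical Koszul resolution of a regular sequence and then extract the proposition by tensoring over $\kk[\yy]$, taking care with the twist $M^{-\yy}$.

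First I would observe that the elements $x_1+y_1,\dots,x_n+y_n$, together with $y_1,\dots,y_n$, form $2n$ algebraically independent generators of $\kk[\xx,\yy]$ as a polynomial ring, so $(x_1+y_1,\dots,x_n+y_n)$ is a regular sequence. The classical Koszul resolution thus identifies $\KK^{\xx+\yy}_\spot$ as a $\kk[\xx,\yy]$-free resolution of $R := \kk[\xx,\yy]/(x_1+y_1,\dots,x_n+y_n)$, and the quotient $R$ is canonically isomorphic to $\kk[\xx]$ as a $\kk[\xx,\yy]$-module on which $y_j$ acts as $-x_j$; in particular, as a $\kk[\yy]$-module, $R$ is free of rank~$1$.

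Next I would apply this resolution to compute the homology of $\KK^{\xx+\yy}_\spot(M) = \KK^{\xx+\yy}_\spot \otimes_{\kk[\yy]} M^{-\yy}$. Each $\KK^{\xx+\yy}_i$ is $\kk[\yy]$-free, so tensoring the resolution with $M^{-\yy}$ over $\kk[\yy]$ computes $\tor_i^{\kk[\yy]}(R, M^{-\yy})$. Since $R$ is $\kk[\yy]$-flat, these Tor groups vanish in positive degree, and the $0$th homology is $R \otimes_{\kk[\yy]} M^{-\yy}$, which as a $\kk[\yy]$-module is just $M^{-\yy}$. This gives exactness of the augmented complex above homological degree~$0$.

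The remaining step is to identify this degree-$0$ homology with $M$ as the stated $\kk[\xx,\yy]$-module and to verify $\kk[\xx]$-freeness of the terms. The $\kk[\xx,\yy]$-action on $R \otimes_{\kk[\yy]} M^{-\yy}$ passes through the quotient $R = \kk[\xx,\yy]/(x_j+y_j)$, so $x_j$ acts as $-y_j$, which by definition of the twist is $-(-x_j) = x_j$ acting naturally on $M$; correspondingly $y_j$ acts as $-x_j$, as required. For freeness, the $\kk[\yy]$-module isomorphism $\kk[\xx,\yy] \cong \kk[\xx] \otimes_\kk \kk[\yy]$ gives $\KK^{\xx+\yy}_i(M) \cong \bigwedge^i V \otimes_\kk \kk[\xx] \otimes_\kk M$, which is $\kk[\xx]$-free.

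The hard part will be purely bookkeeping rather than homology: one must verify that the twist $M \mapsto M^{-\yy}$ and the identification $y_j = -x_j$ inside $R$ interact so that the two negatives cancel on $x_j$ and combine to yield $-x_j$ on $y_j$. This is the only step where a sign mismatch could silently invalidate the identification of the target $\kk[\xx,\yy]$-module.
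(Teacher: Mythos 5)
Your proof is correct, and it takes a genuinely different route from the paper's. The paper identifies $\KK^{\xx+\yy}_\spot(M)$ as the Koszul complex for the sequence $\xx+\yy$ acting on the $\kk[\xx,\yy]$-module $\kk[\xx,\yy]\otimes_{\kk[\yy]}M^{-\yy}$ and then asserts directly that $\xx+\yy$ is a regular sequence on \emph{that} module (a fact that holds for arbitrary $M$, but whose justification requires either the change of variables $u_j=x_j+y_j$, $v_j=y_j$ or the Koszul-homology characterization of regularity). You instead use only the manifest regularity of $\xx+\yy$ on $\kk[\xx,\yy]$ itself, so that $\KK^{\xx+\yy}_\spot$ is a $\kk[\xx,\yy]$-free resolution of $R=\kk[\xx,\yy]/(\xx+\yy)$, then observe that this resolution consists of $\kk[\yy]$-flat modules and therefore computes $\tor^{\kk[\yy]}_\spot(R,M^{-\yy})$ after tensoring---which vanishes in positive degree because $R$ is $\kk[\yy]$-free. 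Your argument trades the paper's unproved regularity claim for two more transparent ingredients (regularity of $\xx+\yy$ on the polynomial ring, and $\kk[\yy]$-flatness of $R$) plugged into Tor-balancing; arguably this makes the acyclicity step more self-contained. The sign bookkeeping you flag at the end is indeed the only delicate point, and you handle it correctly: $x_j$ acts through the quotient $R$ as $-y_j$, which on $M^{-\yy}$ is $-(-x_j)=x_j$, recovering $M$ as a $\kk[\xx]$-module.
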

\begin{proof}
The total complex is free over~$\kk[\xx]$ because
$\KK^{\xx+\yy}_\spot(M) = \KK^\xx_\spot \otimes \kk[\yy]
\otimes_{\kk[\yy]} M^{-\yy}$.  That this complex can also be expressed
as $\KK_\sspot \otimes_{\kk[\xx,\yy]} \kk[\xx,\yy] \otimes_{\kk[\yy]}
M^{-\yy}$ means that it is the Koszul complex for the sequence
$\xx+\yy$ acting on $\kk[\xx,\yy] \otimes_{\kk[\yy]} M^{-\yy}$.  This
sequence is regular, so $\KK^{\xx+\yy}_\spot(M)$ is acyclic.  Its
nonzero homology is naturally the $\kk[\yy]$-module $M^{-\yy}$ with an
action of~$\kk[\xx]$ in which the variable $x_j$ acts on~$M^{-\yy}$
the way $-y_j$ acts.  As a $\kk[\xx]$-module, this is just~$M$.
\end{proof}

\begin{remark}\label{r:total}
If $I \subseteq \kk[\xx]$ is any ideal and~$K$ is any
$\kk[\xx,\yy]$-module, such as $\KK^{\xx+\yy}_\spot$, then
$$%
  K \otimes_{\kk[\yy]} I^{-\yy}
  \cong
  K \otimes_{\kk[\xx,\yy]} \kk[\xx,\yy] \otimes_{\kk[\yy]} I^{-\yy}
  \cong
  K \otimes_{\kk[\xx,\yy]} I^{-\yy}\kk[\xx,\yy],
$$
where the second isomorphism is by flatness of $\kk[\xx,\yy]$
over~$\kk[\yy]$.
%
Thus $\KK_\sspot(I)$ is the ordinary Koszul complex of the ideal
$I^{-\yy} \kk[\xx,\yy]$.  Also note that when $I$ is graded, $I^{-\yy}
= I^\yy$; indeed, $M^{-\yy} = M^\yy$ as $\kk$-vector spaces for any
$\ZZ$-graded $\kk[\xx]$-module~$M$.
\end{remark}

\begin{lemma}\label{l:KK}
When $M$ is a $\ZZ$-graded $\kk[\xx]$-module, the double indexing
$$%
  \KK_{pq}(M) = \kk[\xx] \otimes \KK^\yy_{p+q} \otimes M_{-q}
$$
makes the Koszul bicomplex of~$M$ into a fourth-quadrant bicomplex
$\KK_\sspot(M)$ of modules over~$\kk[\xx]$ concentrated in a strip
between the diagonals $p + q = 0$ and $p + q = n$.
\end{lemma}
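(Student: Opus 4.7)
The plan is to verify three claims in sequence, all mechanical: (a) the horizontal and vertical differentials of Definition~\ref{d:KK} are compatible with the proposed double indexing $\KK_{pq}(M) = \kk[\xx] \otimes \KK^\yy_{p+q} \otimes M_{-q}$; (b) the usual bicomplex identities hold; and (c) the bidegree support lies in the claimed strip of the fourth quadrant. None of these should need more than a direct bookkeeping argument.

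First I would unpack the horizontal differential. It is induced by~$\KK^\xx_\spot$, so it acts only on the $\kk[\xx]$ factor of $\KK_\sspot(M) = \kk[\xx] \otimes \bigwedge^\spot V \otimes M$, leaving the $M$-factor alone while lowering the exterior degree~$p+q$ by one; hence $-q$ is preserved, $q$ is fixed, and $p$ drops by one, giving a map $\KK_{pq}(M) \to \KK_{p-1,\,q}(M)$. The vertical differential comes from~$\KK^\yy_\spot$ and, after tensoring over $\kk[\yy]$ with $M^{-\yy}$, its action $\zz^\sigma \mapsto \sum_j \pm\,\zz^{\sigma-\ee_j}\otimes y_j$ is twisted by $y_j \mapsto -x_j$, which raises the $\ZZ$-grading on~$M$ by one. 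Thus $-q$ increases by one, i.e.\ $q$ decreases by one, while $p+q$ also decreases by one, so $p$ is unchanged; this produces a map $\KK_{pq}(M) \to \KK_{p,\,q-1}(M)$. Each differential squares to zero by the standard Koszul identity, and the two differentials anticommute because they act on independent tensor factors (the $\kk[\xx]$ factor versus the $\bigwedge^\spot V \otimes M$ factor), so the bicomplex axioms follow.

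For the support claim, $\bigwedge^{p+q} V$ vanishes outside $0 \le p+q \le n$, which cuts out the claimed strip. The fourth-quadrant property then comes from the implicit nonnegative-grading hypothesis on the $\ZZ$-graded modules of interest (in particular ideals in~$\kk[\xx]$): if $M_d = 0$ for $d < 0$, then $M_{-q} \ne 0$ forces $q \le 0$, and combining with $p+q \ge 0$ gives $p \ge -q \ge 0$. The only place any genuine care is required is sign bookkeeping: one must confirm that the twist $y_j \mapsto -x_j$ defining~$M^{-\yy}$, together with the standard Koszul signs on $\bigwedge^\spot V$, really produces the correct bicomplex sign convention so that the two differentials anticommute. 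This reduces to the commutativity of the $\xx$- and $(-\yy)$-actions on $\kk[\xx] \otimes \bigwedge^\spot V \otimes M$, and so offers no real obstacle.
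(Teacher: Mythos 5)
Your bookkeeping of the bidegrees (parts (a) and (c)) is correct and is in substance the same calculation the paper makes, though you spell it out differential by differential while the paper compresses it into the single degree identity $(\KK^\yy_{p+q} \otimes_{\kk[\yy]} M^{-\yy})_p = \KK^\yy_{p+q} \otimes M^{-\yy}_{-q} = \KK^\yy_{p+q} \otimes M_{-q}$ and then tensors with $\kk[\xx]$. Your more explicit route has the merit of making visible exactly which index each differential moves, and your observation that the fourth-quadrant condition requires $M$ to be nonnegatively graded (true in all applications here, where $M$ is a monomial ideal) is a useful clarification that the paper leaves implicit.

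However, part (b) contains a genuine error, and it is worth flagging even though it is not needed for the lemma (Definition~\ref{d:KK} already supplies the bicomplex structure, so the lemma only has to check the double indexing and its support). You claim the two differentials anticommute ``because they act on independent tensor factors (the $\kk[\xx]$ factor versus the $\bigwedge^\spot V \otimes M$ factor).'' This is false: \emph{both} differentials contract the exterior factor $\bigwedge^{\!\spot\!} V$, as your own descriptions make clear (each lowers $p+q$ by one). They differ only in which polynomial-variable factor absorbs the discarded $z_j$ --- $d_1$ multiplies by $x_j$ in $\kk[\xx]$, while $d$ acts by $y_j$ (i.e., $-x_j$) on $M^{-\yy}$. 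So the two differentials share the tensor factor $\bigwedge^{\!\spot\!} V$, and the anticommutativity instead comes from the standard sign identity for a Koszul complex on the concatenated sequence $\xx+\yy$. Relatedly, the opening sentence ``[$d_1$] acts only on the $\kk[\xx]$ factor... while lowering the exterior degree $p+q$ by one'' is self-contradictory: a map that only touched the $\kk[\xx]$ factor could not change the exterior degree. If you want to retain part (b), replace the independence argument by an appeal to the Koszul identity on $\xx+\yy$ (as in Proposition~\ref{p:tot-KK}); otherwise simply delete it, since the lemma assumes rather than proves that $\KK_\sspot(M)$ is a bicomplex.
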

\begin{proof}
The content of the claim is that $(\KK^\yy_{p+q} \otimes_{\kk[\yy]}
M^{-\yy})_p = \KK^\yy_{p+q} \otimes M^{-\yy}_{-q} = \KK^\yy_{p+q}
\otimes M_{-q}$ (see Remark~\ref{r:total}) after which the lemma is
proved by tensoring with~$\kk[\xx]$.
\end{proof}

\begin{thm}\label{t:KK-spectral-seq}
The Koszul bicomplex $\KK_\sspot(M)$ for any graded
$\kk[\xx]$-module~$M$ has vertical-then-horizontal spectral sequence
$$%
  \tor_{p+q}(\kk,M)_p \otimes \kk[\xx]
  \implies
  H_{p+q}\KK^{\xx+\yy}_\spot(I).
$$
\end{thm}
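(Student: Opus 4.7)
The plan is to read this off as the standard column-filtration spectral sequence of the bicomplex $\KK_\sspot(M)$. By Lemma~\ref{l:KK} the bicomplex is supported in the bounded strip $0 \leq p+q \leq n$, so the spectral sequence converges automatically to the homology of the total complex, which by definition is $H_{p+q}\KK^{\xx+\yy}_\spot(M)$. The only substantive task is to identify the displayed term with the vertical-homology page.

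I would start by observing that the vertical differential is $\kk[\xx]$-linear and uses no $\xx$-variables in the outer factor, so at each fixed column~$p$ the vertical complex is $\kk[\xx] \otimes C^{(p)}_\spot$ for some subcomplex $C^{(p)}_\spot$ of $\KK^\yy_\spot \otimes_{\kk[\yy]} M^{-\yy}$. To identify $C^{(p)}_\spot$, note that in the indexing $\KK_{pq}(M) = \kk[\xx] \otimes \KK^\yy_{p+q} \otimes M_{-q}$, a basis element $\zz^\sigma \otimes m$ with $|\sigma| = p+q$ and $m \in M_{-q}$ has internal degree $|\sigma| + \deg m = (p+q) + (-q) = p$. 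Thus $C^{(p)}_\spot$ is exactly the internal-degree-$p$ piece of $\KK^\yy_\spot \otimes_{\kk[\yy]} M^{-\yy}$, with the homological grading supplied by the wedge degree.

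Next I would compute $H_\spot(C^{(p)}_\spot)$. Since $\KK^\yy_\spot$ is a free resolution of $\kk$ over $\kk[\yy]$, we have $H_i(\KK^\yy_\spot \otimes_{\kk[\yy]} M^{-\yy}) = \tor^{\kk[\yy]}_i(\kk, M^{-\yy})$. The graded ring isomorphism $\kk[\yy] \to \kk[\xx]$ sending $y_j \mapsto -x_j$ identifies the $\kk[\yy]$-module $M^{-\yy}$ with the $\kk[\xx]$-module~$M$, by the very definition of $M^{-\yy}$, and sends $\kk$ to $\kk$. Hence $\tor^{\kk[\yy]}_\spot(\kk, M^{-\yy}) \cong \tor^{\kk[\xx]}_\spot(\kk, M)$ canonically as $\ZZ$-graded $\kk$-vector spaces. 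Extracting homological degree $p+q$ and internal degree $p$ gives $\tor_{p+q}(\kk, M)_p$, so the vertical-homology page at $(p,q)$ is $\kk[\xx] \otimes \tor_{p+q}(\kk, M)_p$, as asserted.

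Finally, I would invoke the standard convergence of the spectral sequence of a bounded bicomplex (guaranteed by the strip condition from Lemma~\ref{l:KK}) to conclude convergence to $H_{p+q}\KK^{\xx+\yy}_\spot(M)$. The main obstacle is the bookkeeping behind the identification ``column index $p$ equals internal degree $p$''; this is entirely driven by the shift $M_{-q}$ in the indexing convention of Lemma~\ref{l:KK}, so the hardest part will be handling grading and sign conventions cleanly enough that the ring isomorphism $y_j \mapsto -x_j$ really does transfer the Tor computation without spurious degree shifts.
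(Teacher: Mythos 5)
Your proposal is correct and takes essentially the same route as the paper: the paper's proof is a two-sentence citation of Lemma~\ref{l:KK} (for the identification of vertical homology with $\kk[\xx] \otimes \tor_{p+q}(\kk,M)_p$ via its indexing) and Definition~\ref{d:KK} (for the convergence to the homology of the total complex), and your argument is exactly that computation unpacked — in particular the internal-degree bookkeeping $|\sigma| + \deg m = (p+q) + (-q) = p$ is precisely the content of ``particularly its indexing'' in the paper's proof. Your added care about the ring isomorphism $y_j \mapsto -x_j$ transferring $\tor^{\kk[\yy]}$ to $\tor^{\kk[\xx]}$ without degree shifts is a genuine subtlety the paper glosses over, and you handle it correctly.
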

\begin{proof}
The vertical homology of~$\KK_\sspot(M)$ at the location $pq$ is
$\kk[\xx] \otimes \tor_{p+q}(\kk,M)_p$ by Lemma~\ref{l:KK}
(particularly its indexing).  The rest is by Definition~\ref{d:KK}.
\end{proof}

\begin{cor}\label{c:KK-spectral-seq}
For a monomial ideal~$I$, the vertical-then-horizontal spectral
sequence of its Koszul bicomplex~$\KK_\sspot(I)$ is
$$%
  \bigoplus_{|\aa| = p} H_{p + q - 1} K^\aa I \otimes \kk[\xx]
  \implies
  H_{p+q}\KK^{\xx+\yy}_\spot(I).
$$
\end{cor}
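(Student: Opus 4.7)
The plan is to derive Corollary~\ref{c:KK-spectral-seq} as a specialization of Theorem~\ref{t:KK-spectral-seq} to the case $M = I$, combined with a refinement of the $\ZZ$-grading to the $\NN^n$-multigrading and then Hochster's formula applied to each multigraded piece. Since $I$ is a monomial ideal, all the structures in sight (the Koszul bicomplex $\KK_\sspot(I)$, its two differentials, its total complex, and the vertical-then-horizontal spectral sequence) inherit the finer $\NN^n$-grading from the action of $\kk[\xx,\yy]$. The spectral sequence therefore splits as a direct sum over multigraded components, and the $\ZZ$-graded index ``$p$'' in Theorem~\ref{t:KK-spectral-seq} refines as $p = |\aa|$ for $\aa \in \NN^n$.

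First I would invoke Theorem~\ref{t:KK-spectral-seq} with $M = I$ to obtain the spectral sequence
\[
  \tor_{p+q}(\kk, I)_p \otimes \kk[\xx]
  \implies
  H_{p+q}\KK^{\xx+\yy}_\spot(I),
\]
where the subscript ``$p$'' picks off the piece of $\tor$ that sits in $\ZZ$-graded degree $p$. Next I would observe that this $\ZZ$-graded piece decomposes as
\[
  \tor_{p+q}(\kk,I)_p \;=\; \bigoplus_{\substack{\aa \in \NN^n\\|\aa|=p}} \tor_{p+q}(\kk,I)_\aa,
\]
because the $\NN^n$-grading on $I$ refines its $\ZZ$-grading and is preserved by a multigraded free resolution.

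Now I would apply Hochster's formula (Theorem~\ref{t:Kb}) to each multigraded summand, which gives the natural isomorphism
\[
  \tor_{p+q}(\kk,I)_\aa \;\cong\; \HH_{p+q-1}(K^\aa I;\kk).
\]
Substituting this back into the $E_2$-page (which is really the $E_1$-page obtained as the vertical homology) yields
\[
  \tor_{p+q}(\kk,I)_p \otimes \kk[\xx]
  \;=\;
  \bigoplus_{|\aa|=p} \HH_{p+q-1}(K^\aa I) \otimes \kk[\xx],
\]
and the spectral sequence identification follows.

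The only real thing to check is that the refined multigraded splitting of the $\ZZ$-graded Tor really comes from an internal splitting of the bicomplex itself, so that the spectral sequence decomposes as a direct sum of spectral sequences indexed by $\aa \in \NN^n$. This is where I expect the only (minor) friction: one must unwind Lemma~\ref{l:KK} to verify that $\KK_\sspot(I)$ is $\NN^n$-graded as a bicomplex of $\kk[\xx]$-modules, with the vertical differential respecting this grading, so that the vertical homology in slot $pq$ refines as $\bigoplus_{|\aa|=p} \HH_{p+q-1}(K^\aa I) \otimes \kk[\xx]$. Once this bookkeeping is in place, the corollary is immediate.
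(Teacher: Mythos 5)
Your proposal is correct and follows exactly the same route as the paper, which simply says ``Apply Hochster's formula (Theorem~\ref{t:Kb}) to Theorem~\ref{t:KK-spectral-seq}.'' You have spelled out the one implicit step (that the $\ZZ$-graded piece $\tor_{p+q}(\kk,I)_p$ decomposes as $\bigoplus_{|\aa|=p}\tor_{p+q}(\kk,I)_\aa$ and that the bicomplex is $\NN^n$-graded so this splitting is internal), which is a routine but worthwhile clarification.
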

\begin{proof}
Apply Hochster's formula (Theorem~\ref{t:Kb}) to
Theorem~\ref{t:KK-spectral-seq}.
\end{proof}

\begin{remark}\label{r:KK-spectral-seq}
The spectral sequence in Corollary~\ref{c:KK-spectral-seq} produces
arrows as in Problem~\ref{prob:kaplansky}.  However, by the nature of
spectral sequences, these arrows only represent homomorphisms between
subquotients of the relevant free modules and therefore cannot
directly be differentials in a free resolution of~$I$.  The next
section is the remedy.
\end{remark}

\section{Minimal free resolutions from Wall complexes}\label{s:wall}

\noindent
The default coefficient ring in this section is an arbitrary ring~$R$.

This section is mainly a summary of constructions and main results on
Wall complexes from \cite{eagon1990}, without repeating proofs.  The
goal is to deduce that derived Wall complexes of Koszul bicomplexes
are minimal free resolutions (Corollary~\ref{c:wall-koszul}).

  The key point is that the Wall complex
should resolve the given ideal~$I$, and not some associated graded
module~$\gr I$.

\begin{defn}\label{d:wall}
Fix a ring~$R$ and a doubly indexed array $W_\sspot$ of $R$-modules
with maps $\omega_j: W_{pq} \to W_{p-j,q+j-1}$ for $j \in \NN$ (the
index~$pq$ on~$\omega_j$ is suppressed).  Assume that for each element
$w \in W_{pq}$, only finitely many images $\omega_j(w)$ are nonzero.
Set
$$%
  W_i = \bigoplus_{p+q = i} W_{pq}
  \quad\text{and}\quad
  D_i = \sum_{j=0}^\infty \omega_j: W_i \to W_{i-1}.
$$
These data constitute a \emph{Wall complex} if $D^2 = 0$, and
$W_\spot$ with the differential~$D$ is the \emph{total complex}
of~$W_\sspot$.
\end{defn}

\begin{defn}\label{d:vertically-split}
Fix a bicomplex $C_\sspot$ of $R$-modules with vertical
differential $d = d_0$ and horizontal differential~$d_1$.  A
\emph{vertical splitting} of~$C_\sspot$ consists of a differential
$$%
  d^+ = d^+_{pq}: C_{pq} \to C_{p,q+1}
$$
with $d d^+ d = d$ and $d^+ d d^+ = d^+$.  The condition of being a
differential means~$d^+d^+ =\nolinebreak 0$.
\end{defn}

Thus $d^+$ is a vertical cohomological differential, going up columns
opposite to the homological vertical differential~$d$.  The following
is elementary \cite[Proposition~1.1]{eagon1990}.

\begin{prop}\label{p:splitting}
A vertical splitting of~$C_\sspot$ is equivalent to a direct sum
decomposition
$$%
  C_{pq}
  =
  B'_{p,q-1} \oplus \underbrace{H_{pq} \oplus B_{pq}}_{\textstyle Z_{pq}}
$$
in which, for all indices $p$ and~$q$,
\begin{itemize}
\item%
$H_{pq} \oplus B_{pq} = Z_{pq} = \ker d_{pq}$ and 
\item%
$\ol d_{p,q-1}: B'_{p,q-1} \simto B_{p,q-1} = \image d_{pq}$, where
$\ol d_{p,q-1}$ is the restriction of~$d_{pq}$ to~$B'_{p,q-1}$.
\end{itemize}
More precisely, a vertical splitting is constructed from this direct
sum decomposition by
$$%
  d^+_{pq} = \iota_{pq} \circ \ol d_{pq}^{\,-1} \circ \pi_{pq},
$$
where $\pi_{pq}: C_{pq} \onto B_{pq}$ projects to the summand~$B_{pq}$
and $\iota_{pq}: B'_{pq} \into C_{p,q+1}$ is~inclusion.
\end{prop}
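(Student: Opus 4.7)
The proposition asserts an equivalence between two pieces of data---a vertical splitting $d^+$ and a three-term direct sum decomposition of each $C_{pq}$---so the plan is to prove each direction by pure bookkeeping with the identities $dd^+d = d$ and $d^+dd^+ = d^+$, together with $(d^+)^2 = 0$.

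For the forward direction, the key observation is that the compositions $e_{pq} := d^+_{p,q-1} d_{pq}$ and $f_{pq} := d_{p,q+1} d^+_{pq}$ are idempotent endomorphisms of $C_{pq}$: both $e^2 = d^+dd^+d = d^+d = e$ and $f^2 = dd^+dd^+ = dd^+ = f$ follow immediately from the two splitting identities. I then read the three summands off from these idempotents. First, $\image f_{pq} = B_{pq} := \image d_{p,q+1}$: inclusion is automatic, and any $d(z) \in \image d$ satisfies $f(d(z)) = dd^+d(z) = d(z)$. Second, $\ker e_{pq} = Z_{pq} := \ker d_{pq}$, because $e(x) = 0$ forces $d(x) = dd^+d(x) = 0$. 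Third, $\image e_{pq} = \image d^+_{p,q-1} =: B'_{p,q-1}$, since any $d^+(y)$ equals $d^+dd^+(y) \in \image e$. The idempotent $e_{pq}$ then splits $C_{pq} = Z_{pq} \oplus B'_{p,q-1}$; since $f_{pq}(Z_{pq}) \subseteq B_{pq} \subseteq Z_{pq}$, the restriction of $f_{pq}$ to $Z_{pq}$ is an idempotent with image $B_{pq}$ and kernel $H_{pq} := Z_{pq} \cap \ker f_{pq}$, refining the splitting to $C_{pq} = H_{pq} \oplus B_{pq} \oplus B'_{p,q-1}$.

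The remaining piece is that the restriction $\ol d_{p,q-1}$ of $d_{pq}$ to $B'_{p,q-1}$ is an isomorphism onto $B_{p,q-1}$: surjectivity follows because each $d(z) \in B_{p,q-1}$ equals $d(d^+d(z))$ with $d^+d(z) \in B'_{p,q-1}$, and injectivity because $x = d^+(y) \in B'_{p,q-1}$ with $d(x) = 0$ forces $d^+(y) = d^+dd^+(y) = 0$. For the reverse direction, I define $d^+_{pq} := \iota_{pq}\,\ol d_{pq}^{\,-1}\,\pi_{pq}$ with $\pi_{pq}: C_{pq} \onto B_{pq}$ the projection along $B'_{p,q-1} \oplus H_{pq}$. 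The composition $d_{p,q+1} d^+_{pq}$ equals $\pi_{pq}$ as a map $C_{pq} \to B_{pq} \subseteq C_{pq}$, since $d_{p,q+1}$ restricted to $B'_{pq}$ is $\ol d_{pq}$, the inverse of $\ol d_{pq}^{\,-1}$. From this the two splitting identities are immediate, and $(d^+)^2 = 0$ because $d^+_{pq}(C_{pq}) \subseteq B'_{pq}$ sits entirely outside $B_{p,q+1}$ in the three-term decomposition of $C_{p,q+1}$, so $\pi_{p,q+1}$ annihilates it.

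I do not expect any real obstacle: this is elementary linear algebra once the idempotents $e_{pq}$ and $f_{pq}$ are in hand. The only subtlety worth flagging is indexing---the two idempotents both live on $C_{pq}$ but factor through the adjacent vertical stages $q-1$ and $q+1$, and one must keep track that the summand $B'_{p,q-1}$ produced by $e_{pq}$ in the forward direction agrees with the summand $\image d^+_{p,q-1}$ used implicitly in the backward construction at stage $q-1$.
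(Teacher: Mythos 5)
Your proof is correct. Note that the paper does not actually give a proof of this proposition---it is cited from Eagon's paper (\cite[Proposition~1.1]{eagon1990}) with the remark that it is elementary---so there is no internal argument to compare against. Your route via the idempotents $e = d^+d$ and $f = dd^+$ is exactly the natural one: $e$ splits $C_{pq}$ as $\ker e \oplus \image e = Z_{pq} \oplus B'_{p,q-1}$, the restriction of $f$ to $Z_{pq}$ then splits off $B_{pq}$ with complement $H_{pq}$, and the two splitting identities together with $(d^+)^2 = 0$ give all the required containments and isomorphisms. The reverse direction is also handled cleanly, with the observation $d_{p,q+1}d^+_{pq} = \pi_{pq}$ doing most of the work.

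One small point you could add for completeness: the word ``equivalent'' in the statement suggests the two constructions should be mutually inverse, and you only verify the two directions separately. The remaining check is that applying the formula $\iota\,\ol d^{\,-1}\pi$ to the decomposition you built from a given $d^+$ recovers that $d^+$ itself. This follows because $d^+$ kills $B'_{p,q-1} = \image d^+$ (by $(d^+)^2 = 0$) and kills $H_{pq}$ (an element $h \in H_{pq}$ has $d^+h \in \image d^+ \cap \ker d = B'_{pq} \cap Z_{p,q+1} = 0$, the last equality because $\ol d_{pq}$ is injective on $B'_{pq}$), while on $B_{pq}$ one checks $d^+(b) = \iota\,\ol d^{\,-1}(b)$ using $d^+d = e$ and the fact that $e$ fixes $\image d^+$. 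This is the same bookkeeping you already set up, so it is a minor omission rather than a gap.
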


The homomorphisms whose composites define Wall complexes from
bicomplexes are elementary to isolate.

\begin{proof}[\bf Lemma~\ref{l:dd+}]\it\refstepcounter{thm}\label{l:dd+}
Fix a vertical splitting of~$C_\sspot$, with notation as in
Proposition~\ref{p:splitting}.
\begin{enumerate}
\item\label{i:P}%
The homology projection $C_\sspot \onto H_\sspot$~is
$P = \1 - d d^+ - d^+ d$.
\item%
The composite of the upward and leftward differentials induces
homomorphisms
$$%
\begin{array}{c@{}c@{}c}
  \makebox[0pt][l]{$C_{p-1,q+1}$}\qquad\ \
\\[.5ex] &\nwarrow\raisebox{.6ex}{\makebox[0pt][l]{$\scriptstyle\!\!\!\!d^+ d_1$}}
\\            && C_{pq}
\end{array}
$$
\end{enumerate}
Together, these homomorphisms induce morphisms $\omega_j: H_{pq} \to
H_{p-j,q+j-1}$ for $j \geq 1$~via
\begin{align*}
  \omega_j &= P (d_1 d^+)^{j-1} d_1.\qedhere
\end{align*}
\end{proof}


\begin{defn}\label{d:derived-wall}
The \emph{derived Wall complex} of a bicomplex~$C_\sspot$ with
vertical differential~$d$ split by~$d^+$ is $H_\sspot$ with the
differentials $\omega_0 = 0$ and $\omega_i$ from Lemma~\ref{l:dd+}
for~$i \geq 1$.
\end{defn}

\begin{remark}\label{r:natural-wall}
Let $C_\sspot$ be a vertically split bicomplex.  The derived Wall
complex selects a split submodule $H_{pq} \subseteq Z_{pq}$ inside the
vertical cycles of~$C_{pq}$ that maps isomorphically to the vertical
homology---naturally defined as the quotient of these same cycles
modulo the boundary submodule---which we denote by $\HH_{pq} =
Z_{pq}/B_{pq}$ so as not to confuse it with the submodule~$H_{pq}$.
The Wall differential $\omega_j = P (d_1 d^+)^{j-1} d_1$ assumes that
its input is an element of the split homology submodule~$H_{pq}$.  In
applications such as to Problem~\ref{prob:kaplansky}, where one wishes
to specify homomorphisms
$$%
  \wt\omega_j: \HH_{pq} \to \HH_{p-j,q+j-1}
$$
on natural homology, the input should be a homology class---specified
as a cycle that is well defined only up to adding a boundary element,
rather than specified as an element of the split
submodule~$H_{pq}$---but at a cost: $\wt\omega_j$ must first project
$Z_{pq}$ to~$H_{pq}$ to ensure that the Wall differential acts
indistinguishably on different cycles representing the same homology
class.  This projection is
$$%
  \1 - d d^+: Z_{pq} \to H_{pq}.
$$
On the other hand, $\wt\omega_j$ need not be forced to produce output
that lies in the split submodule~$H_{p-j,q+j-1}$; it need only produce
a cycle in~$Z_{p-j,q+j-1}$, since the output of~$\wt\omega_j$ is to be
understood modulo~$B_{pq}$.  That means $\wt\omega_j$ can use the
simpler~projection
$$%
  \1 - d^+ d: C_{pq} \to Z_{pq}
$$
from chains to cycles instead of the split homology projection $P = \1
- d d^+ - d^+ d$ from Lemma~\ref{l:dd+}.\ref{i:P}.  In total, then,
the projection $dd^+$ moves from the left end of the expression
defining~$\omega_j = (\1 - d d^+ - d^+ d) (d_1 d^+)^{j-1} d_1$ to the
right end of the~expression
$$%
  \wt\omega_j
  =
  (\1 - d^+ d) (d_1 d^+)^{j-1} d_1 (\1 - d d^+)
  :
  Z_{pq} \to Z_{p-j,q+j-1},
$$
which defines a differential---the same differential as $\omega_j$
defines---because
\begin{align*}
(\1 - dd^+)(\1 - d^+d)
   &= \1 - dd^+ - d^+d + dd^+d^+d
\\ &= \1 - dd^+ - d^+d
\\[-4.05ex]
\end{align*}
occurs between the factors of $(d_1 d^+)^{j-1} d_1$ and $(d_1
d^+)^{j'-1} d_1$ in the square of the Wall differential either way.
\end{remark}

It is these differentials~$\wt\omega_j$, rather than $\omega_j$ from
Lemma~\ref{l:dd+}, that solve Problem~\ref{prob:kaplansky} and give
rise to the combinatorics in Section~\ref{s:sylvan}.
We therefore record this shift from split homology~$H_\sspot$ to
natural homology~$\HH_\sspot$ formally, the proof being in
Remark~\ref{r:natural-wall}.

\begin{defn}\label{d:natural-wall}
The \emph{natural Wall complex} of a bicomplex~$C_\sspot$ with
vertical differential~$d$ split by~$d^+$ is $\HH_\sspot$ with the
differentials $\wt\omega_0 = 0$ and $\wt\omega_i$ from
Remark~\ref{r:natural-wall} for~$i \geq 1$.
\end{defn}

\begin{prop}[{\cite[Theorem~1.2]{eagon1990}}]\label{p:wall}
The derived Wall complex~$H_\sspot$ of a vertically split
bicomplex~$C_\sspot$ is a Wall complex as long as the local finiteness
of~$\omega_\spot$ is satisfied.  The total complex of~$H_\sspot$ has a
filtration by taking successively more columns, starting from the
left.  The spectral sequence $HE^\spot$ for this filtration
of~$H_\sspot$ is the same as the vertical-then-horizontal spectral
sequence $E^\spot$ of~$C_\sspot$, in the sense that $HE^r_{pq} \cong
E^r_{pq}$~for~$r \geq 1$.
\end{prop}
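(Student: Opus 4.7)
The plan is to recognize the derived Wall complex as an instance of the \emph{homological perturbation lemma} (HPL) and then deduce the spectral sequence comparison from a standard induction. First I would extract from Proposition~\ref{p:splitting} the data of a strong deformation retract of $(C_\sspot, d)$ onto $(H_\sspot, 0)$: the inclusion $\iota\colon H \hookrightarrow C$ of the chosen complement, the projection $\pi\colon C \twoheadrightarrow H$ annihilating $B' \oplus B$, and the homotopy $d^+$ itself. The identities $\pi\iota = \1_H$, $\iota\pi = \1_C - dd^+ - d^+d$, $d^+d^+ = 0$, together with the side conditions $\pi d^+ = 0 = d^+\iota$, are immediate from the direct sum decomposition and the explicit formula $d^+ = \iota \circ \ol d^{\,-1} \circ \pi$ in Proposition~\ref{p:splitting}. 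Because $C_\sspot$ is a bicomplex, the horizontal map $d_1$ is a square-zero perturbation with $d d_1 + d_1 d = 0$.

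Next I would apply HPL, or equivalently verify by formal manipulation, to obtain the transferred differential
\[
D \;=\; \sum_{j \geq 1}\pi\,(d_1 d^+)^{j-1}\,d_1\,\iota \;=\; \sum_{j \geq 1}\omega_j,
\]
which is well-defined on each element by the local finiteness of $\omega_\spot$. The equation $D^2 = 0$ reduces to the bigraded identities $\sum_{i+j=k}\omega_i\omega_j = 0$ for each $k \geq 1$, and each of these collapses by expanding $P = \1 - dd^+ - d^+d$ and using the relations $d^+dd^+ = d^+$, $dd^+d = d$, $d d_1 + d_1 d = 0$, and $\pi d^+ = 0 = d^+\iota$ to telescope adjacent summands.

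For the filtration and spectral sequence claims, define the increasing column filtration $F_p H_i = \bigoplus_{p' \leq p,\, p'+q = i} H_{p'q}$; each $\omega_j$ with $j \geq 1$ lowers the column index by $j$, so $D$ preserves $F_\spot$. The associated spectral sequence has $HE^0_{pq} = H_{pq}$ with differential $\omega_0 = 0$, hence $HE^1_{pq} = H_{pq}$ with $d^1 = \omega_1 = \pi d_1 \iota$. Under the splitting's isomorphism $H_{pq} \simto \HH_{pq}$, this agrees with the map on vertical homology induced by $d_1$, i.e.\ the $d^1$ of the vertical-then-horizontal spectral sequence of $C_\sspot$. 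For $r \geq 2$ I would induct: a class surviving to $HE^r$ is represented by $h \in H$ with $(\omega_1 + \cdots + \omega_{r-1})h$ lying in filtration strictly below $p - r$, and $\omega_r h$ records the outcome of the classical lift-and-apply-$d_1$ zigzag defining the $r$-th differential of $E^\spot$, since each inner factor $d^+ d_1$ performs exactly one step of lifting $d_1$-images across a $d$-boundary.

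The hardest part will be the inductive identification $HE^r \cong E^r$ for $r \geq 2$: the $E^1$ agreement is essentially formal, but at higher pages one must match the Wall series $\omega_r = \pi(d_1 d^+)^{r-1}d_1\iota$ term-by-term against the iterative zigzag recipe of the classical spectral sequence on the appropriate subquotient, using the vanishing of the partial sum $\omega_1 + \cdots + \omega_{r-1}$ modulo lower filtration to realize each stage of the zigzag inside a single power of~$d^+d_1$.
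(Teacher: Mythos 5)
The paper does not prove this proposition; it explicitly attributes it to \cite[Theorem~1.2]{eagon1990} and records at the start of Section~\ref{s:wall} that the section summarizes Eagon's constructions ``without repeating proofs.'' So there is no proof in the paper to compare against, and your task was to supply Eagon's argument (or an equivalent) from scratch.

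Your HPL framing is a sound and genuinely modern way to see the first half of the statement. The side conditions $\pi d^+ = 0$ and $d^+\iota = 0$ do hold here (because $d^+$ lands in the $B'$ summand, which $\pi$ kills, and $\iota$ lands in $H$, which the projection-to-$B$ kills), so the perturbation lemma applies cleanly with perturbation $d_1$ and yields $D = \sum_j \omega_j$ with $D^2 = 0$, modulo the local finiteness hypothesis. That part of the proposal is correct.

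The gap is in the spectral sequence comparison for $r \ge 2$. You flag it yourself, but the sketch as written --- ``match the Wall series term-by-term against the iterative zigzag recipe'' --- is not an argument; it is a restatement of the thing to be proved, and the suggestion that $(\omega_1+\cdots+\omega_{r-1})h$ ``lies in filtration strictly below $p-r$'' is not literally how the $r$-th cycle module $Z^r_p$ is described for a filtered complex (representatives are not homogeneous in column degree, and the individual $\omega_j h$ live in distinct columns so they cannot cancel each other). The clean way to close the gap, entirely within the HPL framework you already invoked, is to use the \emph{perturbed inclusion} $\iota_\infty = \sum_{k\ge 0}(d^+d_1)^k\iota : (H_\sspot,D)\to(C_\sspot,d+d_1)$, which HPL produces for free. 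Since $d_1$ lowers column index by one and $d^+$ preserves it, $\iota_\infty$ is a filtered chain map for the column filtrations on both sides. On the associated graded ($E^0$) it reduces to $\iota$, and on $E^1$ it therefore induces the isomorphism $H_{pq}\simto\HH_{pq}$ built into the vertical splitting. The comparison theorem for spectral sequences of filtered complexes (filtration bounded below, which holds here) then gives $HE^r_{pq}\cong E^r_{pq}$ for all $r\ge 1$ with no term-by-term bookkeeping. Without this step, or some equally explicit substitute, the proposal proves $D^2=0$ and the $E^1$ identification but not the full statement.
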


\begin{prop}\label{p:natural-wall}
Using the natural Wall complex~$\HH_\sspot$ in place of~$H_\sspot$ and
$\wt\omega_\spot$ in place of~$\omega_\spot$ in
Proposition~\ref{p:wall}, its conclusions hold verbatim.\qed
\end{prop}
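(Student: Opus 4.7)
The plan is to construct a natural isomorphism $\varphi: H_\sspot \simto \HH_\sspot$ of bigraded modules intertwining $\omega_\spot$ with $\wt\omega_\spot$, and then to transport Proposition~\ref{p:wall}'s conclusions across this isomorphism. Using the decomposition $C_{pq} = B'_{p,q-1} \oplus H_{pq} \oplus B_{pq}$ from Proposition~\ref{p:splitting}, the inclusion $H_{pq} \into Z_{pq}$ composed with the quotient $Z_{pq} \onto \HH_{pq}$ is an isomorphism because $Z_{pq} = H_{pq} \oplus B_{pq}$; this defines~$\varphi_{pq}$.

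The heart of the argument is the compatibility $\varphi \circ \omega_j = \wt\omega_j \circ \varphi$. To carry it out, first identify $dd^+$ as the projection from $C_{pq}$ onto the summand $B_{pq}$ and $d^+d$ as the projection onto $B'_{p,q-1}$, both of which follow directly from the description of~$d^+$ in Proposition~\ref{p:splitting}. Both projections vanish on $H_{pq}$, so for $h \in H_{pq}$ the formula from Definition~\ref{d:natural-wall} collapses to
$$
  \wt\omega_j(\varphi(h))
  = (\1 - d^+d)(d_1 d^+)^{j-1} d_1 h \bmod B_{p-j,q+j-1},
$$
which differs from $\omega_j(h) = (\1 - dd^+ - d^+d)(d_1 d^+)^{j-1} d_1 h \in H_{p-j,q+j-1}$ only by the term $dd^+(d_1 d^+)^{j-1} d_1 h$ in the boundary summand~$B_{p-j,q+j-1}$; hence $\wt\omega_j(\varphi(h)) = \varphi(\omega_j(h))$. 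Well-definedness of~$\wt\omega_j$ on $\HH_{pq}$ is automatic from $(\1 - dd^+)b = 0$ for $b \in B_{pq}$, which shows that the formula kills boundary representatives before any horizontal map is applied.

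With the intertwining in hand, each conclusion of Proposition~\ref{p:wall} transports to $(\HH_\sspot, \wt\omega_\spot)$ with no further work: local finiteness of $\wt\omega_\spot$ is equivalent to that of~$\omega_\spot$; the relation $\wt D^2 = \varphi D^2 \varphi^{-1} = 0$ makes $\HH_\sspot$ a Wall complex; the column filtrations on the two total complexes correspond under the bigraded isomorphism~$\varphi$; and the associated spectral sequences therefore agree from the $E^1$ page onward, giving $HE^r_{pq} \cong E^r_{pq}$ for~$r \geq 1$, which is the conclusion of Proposition~\ref{p:wall} for $(\HH_\sspot, \wt\omega_\spot)$.

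The main obstacle is the intertwining computation in the second paragraph. It is essentially bookkeeping, but requires care because $\omega_j$ uses the full homology projection $P = \1 - dd^+ - d^+d$ to land in the submodule~$H$, whereas $\wt\omega_j$ uses only $\1 - d^+d$ to land in the cycles before passing to the quotient; the extra $dd^+$ summand is precisely what falls into boundaries and vanishes in~$\HH$. Once one sees the two projections $\1 - dd^+$ and $\1 - d^+d$ as commuting complementary projections on cycles versus onto $H$, the rest of the proposition is formal.
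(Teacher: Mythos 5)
Your proof is correct and takes essentially the same approach as the paper's, which is supplied by Remark~\ref{r:natural-wall}: the core observation in both is that $\wt\omega_j$ and $\omega_j$ differ only by a term that lands in boundaries, precisely because $dd^+$ and $d^+d$ are the projections onto the $B$ and $B'$ summands, and $(\1-dd^+)(\1-d^+d) = \1-dd^+-d^+d$. You make the transport explicit by introducing the isomorphism $\varphi: H_{pq} \simto \HH_{pq}$ and verifying the intertwining relation, whereas the paper phrases the same computation as the observation that the two factorizations insert the same projection between consecutive factors of the Wall differential squared; your version is perhaps a bit more systematic but carries no additional mathematical content.

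Two small points worth noting for the record. First, in your second paragraph you should emphasize, as you implicitly do, that $(\1-d^+d)$ really lands in $Z_{p-j,q+j-1}$ (it kills the $B'$ summand), which is needed for the reduction mod $B_{p-j,q+j-1}$ to make sense. Second, the parenthetical ``commuting complementary projections'' at the end is slightly loose terminology --- they are commuting idempotents but not complementary in the standard sense of summing to the identity --- but this doesn't affect the argument.
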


\begin{cor}\label{c:wall-koszul}
Fix an arbitrary standard\/ $\ZZ$-graded module~$M$ over~$\kk[\xx]$.
The total complex $W_\spot(M)$ of the derived or natural Wall
complex~$W_\sspot(M)$ for any vertical splitting of the Koszul
bicomplex~$\KK_\sspot(M)$ is a minimal free resolution of~$M$.
\end{cor}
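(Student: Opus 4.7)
The plan is to verify the three defining properties of a minimal free resolution of~$M$ in sequence: each term is free, the total complex is acyclic with $H_0 \cong M$, and every differential has matrix entries in the maximal ideal $\mm = (x_1,\ldots,x_n)$.

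First, for freeness, I would apply Theorem~\ref{t:KK-spectral-seq} to identify the vertical homology of $\KK_\sspot(M)$ at bidegree~$(p,q)$ with $\tor_{p+q}(\kk, M)_p \otimes \kk[\xx]$. Because $\tor_{p+q}(\kk,M)_p$ is a $\kk$-vector space, the term $W_{pq}(M)$ of either the derived or the natural Wall complex is a free $\kk[\xx]$-module, so summing along diagonals shows each $W_i(M)$ is free as well. The local finiteness hypothesis in Definition~\ref{d:wall} needed to build the total complex is automatic because Lemma~\ref{l:KK} puts the bicomplex inside the bounded strip $0 \le p+q \le n$.

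Next, for acyclicity, I would invoke Proposition~\ref{p:wall} (or Proposition~\ref{p:natural-wall} in the natural case): the spectral sequence of the column filtration on $W_\spot(M)$ agrees from~$E^1$ onward with the vertical-then-horizontal spectral sequence of $\KK_\sspot(M)$. Both converge, since boundedness of the strip bounds the column filtration on both total complexes, so they share the same $E^\infty$ page, which is the associated graded of $H_\spot \KK^{\xx+\yy}_\spot(M)$. By Proposition~\ref{p:tot-KK} this last homology is $M$ concentrated in degree~$0$, forcing $H_i W_\spot(M) = 0$ for $i>0$ and $H_0 W_\spot(M) \cong M$.

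Finally, for minimality, I would examine the structure of the Wall differentials. By Lemma~\ref{l:dd+} and Remark~\ref{r:natural-wall}, each $\omega_j$ (or $\wt\omega_j$) with $j \ge 1$ is a composite $P(d_1 d^+)^{j-1} d_1$, respectively $(\1-d^+d)(d_1 d^+)^{j-1} d_1(\1-dd^+)$, which always contains at least one factor of the horizontal Koszul differential~$d_1$. Since $d_1$ is the Koszul map on the~$\xx$-variables, its matrix entries are $\pm x_j$, so its image lies in $\mm \cdot \KK_\sspot(M)$. The vertical differential~$d$ is $\kk[\xx]$-linear, so any $\kk$-linear vertical splitting extends $\kk[\xx]$-linearly; with this standing convention $d^+$ and all of the projections built from $d$ and~$d^+$ preserve the $\mm$-adic filtration. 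Hence every $\omega_j$ with $j \ge 1$ lands in $\mm \cdot W_{p-j,q+j-1}(M)$, and $\omega_0 = 0$ by definition, so the total Wall differential has matrix entries in~$\mm$, which is exactly minimality. I expect the principal obstacle to be the acyclicity step: care is needed to ensure that matching $E^1$ pages inside a bounded strip actually forces the Wall total complex to have the same homology as $\KK^{\xx+\yy}_\spot(M)$, but Lemma~\ref{l:KK} supplies the boundedness and Propositions~\ref{p:wall} and~\ref{p:natural-wall} are designed so that the two spectral sequences abut to the same graded target, so standard convergence in a bounded strip finishes the argument.
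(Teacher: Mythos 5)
Your argument is correct in outline, and it fills in explicitly what the paper delegates: the paper's own proof consists of the reduction from natural to derived Wall complexes via Remark~\ref{r:natural-wall}, followed by a citation to \cite[Lemma~3.5]{eisenbud-floystad-schreyer2003}, so you are essentially supplying the content of that cited lemma by hand.  Your freeness step (vertical homology is $\tor_{p+q}(\kk,M)_p\otimes\kk[\xx]$, hence free) and your observation that Lemma~\ref{l:KK} forces local finiteness of the~$\omega_j$ are both right, though the bound on~$j$ really comes from the fourth-quadrant condition $p\ge 0$ (so $\omega_j\colon W_{pq}\to W_{p-j,q+j-1}$ dies for $j>p$) rather than from the strip $0\le p+q\le n$, which is preserved by every~$\omega_j$.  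Your minimality argument is also correct: every $\omega_j$ (and $\wt\omega_j$) with $j\ge1$ is sandwiched around at least one~$d_1$, whose matrix entries lie in~$\mm$, and all the other factors are $\kk[\xx]$-linear by Definition~\ref{d:vertically-split} (no separate extension from $\kk$-linearity is needed---a vertical splitting of a bicomplex of $\kk[\xx]$-modules is $\kk[\xx]$-linear by fiat).  A shorter route to minimality, which is the one the paper has in mind (see the sentence after Problem~\ref{prob:kaplansky}), is to note that the ranks of the Wall complex's free modules equal the Betti numbers of~$M$ by Theorem~\ref{t:KK-spectral-seq}, so any free resolution with those terms is automatically minimal.

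The one step where you should be more careful is the conclusion from ``same $E^1$ page'' to ``same homology of the total complex.''  Propositions~\ref{p:wall} and~\ref{p:natural-wall} give an isomorphism $HE^r_{pq}\cong E^r_{pq}$ for $r\ge1$; together with boundedness this shows that the associated gradeds of $H_\spot W_\spot(M)$ and of $H_\spot\KK^{\xx+\yy}_\spot(M)$ agree.  That suffices to kill $H_i W_\spot(M)$ for $i\ne 0$ (a finitely filtered module whose associated graded vanishes is zero), but by itself it does not identify $H_0 W_\spot(M)$ with~$M$---two modules with finite filtrations and isomorphic associated gradeds need not be isomorphic, and a resolution requires a specific augmentation $W_0(M)\onto M$, not just an abstract isomorphism in degree~$0$.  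To close this gap one needs the comparison of spectral sequences in Proposition~\ref{p:wall} to arise from an actual map of filtered complexes (which it does in Eagon's construction, since the split homology $H_{pq}$ sits inside~$C_{pq}$ and the augmentation on $\KK^{\xx+\yy}_\spot(M)$ restricts), after which the standard mapping lemma for bounded spectral sequences yields the isomorphism on abutments.  You flagged this as the delicate point, which is exactly right; it is precisely the issue that \cite[Lemma~3.5]{eisenbud-floystad-schreyer2003} is cited to settle.
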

\begin{proof}
Remark~\ref{r:natural-wall} explains why the derived and natural Wall
complexes have the same homology, so it is only necessary to prove the
derived Wall case, for which
\cite[Lemma~3.5]{eisenbud-floystad-schreyer2003} provides a brief
proof in the generality of abelian categories.
\end{proof}

\section{Monomial resolutions from splittings}\label{s:resolutions}

The first result in this section accomplishes the non-combinatorial
part of the proof of Theorem~\ref{t:sylvan}.  It is separated out from
the rest of the proof because it applies in much more generality than
the canonical sylvan setting, which is the choice to use the
Moore--Penrose pseudoinverses as the Koszul simplicial splittings.
Notation for saturated decreasing lattice paths is as in
Definition~\ref{d:hedgerow}: $\lambda \in \Lambda(\aa,\bb)$ is
described as $\bb = \bb_0, \bb_1, \dots, \bb_{\ell-1}, \bb_\ell = \aa$
or its successive differences $(\lambda_1,\dots,\lambda_\ell)$ with
$\lambda_j = \bb_{j-1} - \bb_j$.

\begin{thm}\label{t:choice-of-splitting}
Fix a monomial ideal~$I$.  Any splittings~$\del^{\bb+}$ of the
differentials~$\del^\bb$ of the Koszul simplicial complexes $K^\bb I$
for $\bb \in \NN^n$ that are themselves differentials satisfying
\begin{enumerate}
\item%
$\del^\bb \del^{\bb+} \del^\bb = \del^\bb$ and
\item%
$\del^{\bb+} \del^\bb \del^{\bb+} = \del^{\bb+}$
\end{enumerate}
yield a minimal free resolution of~$I$ whose differential from
homological stage $i+1$ to stage~$i$ has its component $\HH_{i-1}
K^\aa I \otimes\kk[\xx] (-\aa) \from \HH_i K^\bb I \otimes \kk[\xx]
(-\bb)$ induced by~the~map
$$%
  \HH_{i-1} K^\aa I
  \stackrel{\ D}\ffrom
  \HH_i K^\bb I
$$
in $\NN^n$-degree~$\bb$
that acts on any $i$-cycle in~$\wt Z_i K^\bb I$ via
$$%
  D\
  =
  \!\sum_{\lambda \in \Lambda(\aa,\bb)}
  (I^\aa - \del^{\aa+}_i \del^\aa_i)
  d_1^{\lambda_\ell}
  \Bigl(
  \prod_{j=1}^{\ell-1}
  \del^{\bb_j+}_i
  d_1^{\lambda_j}
  \Bigr)
  (I^\bb - \del^\bb_{i+1} \del^{\bb+}_{i+1}),
$$
where $d_1^{\lambda_j\!}$ takes $\tau \subseteq \{1,\dots,n\}$ to~$0$
if
$\lambda_j \not\in \tau$ and to $(-1)^{\tau \minus \lambda_j \subset
\tau}\, \tau\!\minus\!\lambda_j$ if $\lambda_j \in \tau$.
\end{thm}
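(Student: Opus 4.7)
The plan is to derive Theorem~\ref{t:choice-of-splitting} from Corollary~\ref{c:wall-koszul} applied to the Koszul bicomplex $\KK_\sspot(I)$ of Section~\ref{s:koszul}. First I would assemble the given Koszul splittings $\del^{\bb+}$ into a single vertical splitting $d^+$ of $\KK_\sspot(I)$. The vertical differential $d$ of the bicomplex preserves total $\yy$-multidegree and restricts on the $\yy$-multidegree $\bb$ piece to (a sign twist of) the Koszul simplicial differential $\del^\bb$ of $K^\bb I$; this is the content of Hochster's formula Theorem~\ref{t:Kb} combined with Theorem~\ref{t:KK-spectral-seq}. So one defines $d^+|_\bb := \del^{\bb+}$ multigraded-degree-wise. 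The hypotheses $\del^\bb \del^{\bb+} \del^\bb = \del^\bb$ and $\del^{\bb+} \del^\bb \del^{\bb+} = \del^{\bb+}$ immediately yield $d d^+ d = d$ and $d^+ d d^+ = d^+$, while the fact that each $\del^{\bb+}$ is a differential ensures $(d^+)^2 = 0$. This produces a vertical splitting in the sense of Definition~\ref{d:vertically-split} that respects the $\NN^n$-multigrading refining the $\ZZ$-grading of Lemma~\ref{l:KK}, and Corollary~\ref{c:wall-koszul} then guarantees that the total complex of the natural Wall complex of $\KK_\sspot(I)$ is a minimal $\NN^n$-graded free resolution of $I$. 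Local finiteness required by Proposition~\ref{p:wall} is automatic: $\wt\omega_j$ decreases $\yy$-multidegree by a sum of $j$ positive standard basis vectors, so for each fixed $\bb$ only finitely many $j$ yield nonzero output. Since the vertical homology at position $(p,q)$ in multidegree $\bb$ with $|\bb| = p$ equals $\HH_{p+q-1} K^\bb I \otimes \kk[\xx]$ by Corollary~\ref{c:KK-spectral-seq}, the summand structure $\HH_i K^\bb I \otimes \kk[\xx](-\bb)$ claimed in the theorem emerges directly.

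The remaining task is to identify the natural Wall differential $\wt\omega_\ell = (\1 - d^+ d)(d_1 d^+)^{\ell-1} d_1 (\1 - d d^+)$ from Remark~\ref{r:natural-wall} with the formula in the theorem. The crucial observation is that the horizontal differential $d_1$, arising from the Koszul complex on $\xx$ via the rule $\zz^\sigma \otimes \xx^\cc \mapsto \sum_k (-1)^{\sigma \minus \ee_k \subset \sigma}\, \zz^{\sigma-\ee_k} \otimes \xx^{\cc+\ee_k}$, decomposes multigraded-degree-wise. The $k$-th summand takes the $\yy$-multidegree $\bb$ piece to the $\yy$-multidegree $\bb - \ee_k$ piece, since the wedge factor drops by $\ee_k$ while multiplication by $x_k$ only affects the $\kk[\xx]$ factor and leaves $\yy$-multidegree unchanged. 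This summand is exactly the operator $d_1^{\ee_k}$ of the theorem, with matching sign. When one expands the composite $(d_1 d^+)^{\ell-1} d_1$ between fixed source multidegree $\bb$ and target multidegree $\aa$, each intermediate step falls into a uniquely determined $\yy$-multidegree, so every nonzero term is indexed by a saturated decreasing lattice path $\lambda = (\lambda_1, \ldots, \lambda_\ell) \in \Lambda(\aa,\bb)$. The operator associated to $\lambda$ is precisely $d_1^{\lambda_\ell} \bigl(\prod_{j=1}^{\ell-1} \del^{\bb_j+}_i d_1^{\lambda_j}\bigr)$, with the multigraded splittings $\del^{\bb_j+}$ inserted at the interior lattice points $\bb_j$; the outer projections $(\1 - d^+ d)$ and $(\1 - d d^+)$ restrict at multidegrees $\aa$ and $\bb$ to $(I^\aa - \del^{\aa+}_i \del^\aa_i)$ and $(I^\bb - \del^\bb_{i+1} \del^{\bb+}_{i+1})$ respectively.

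The principal obstacle is careful bookkeeping---keeping the $\NN^n$-multigrading conventions consistent between the $\kk[\xx]$ factor (contributing the free-module structure of the resolution) and the $\bigwedge V \otimes I^{-\yy}$ side (refining the vertical homology by Koszul simplicial complexes), and verifying that the directional step $\lambda_j = \bb_{j-1} - \bb_j$ of a decreasing lattice path matches the corresponding summand of $d_1$ on the $\yy$-multidegree side with the stated Koszul signs. No new conceptual ingredient is needed beyond the Wall complex formalism of Section~\ref{s:wall} and the multigraded structure of the Koszul bicomplex from Section~\ref{s:koszul}; the theorem is essentially a transcription of Corollary~\ref{c:wall-koszul} into the multigraded Koszul simplicial language, mediated by the bijection between summands of $(d_1 d^+)^{\ell-1} d_1$ and lattice paths in $\Lambda(\aa,\bb)$.
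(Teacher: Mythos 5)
Your proposal takes essentially the same route as the paper's own proof: assemble the Koszul simplicial splittings $\del^{\bb+}$ into a vertical splitting $d^+$ of the Koszul bicomplex $\KK_\sspot(I)$, invoke Corollary~\ref{c:wall-koszul} to obtain a minimal free resolution from the natural Wall complex, and then identify the natural Wall differential $\wt\omega_\ell = (\1 - d^+d)(d_1 d^+)^{\ell-1}d_1(\1 - dd^+)$ with the claimed lattice-path sum by decomposing $d_1$ into its components $d_1^{\ee_1},\dots,d_1^{\ee_n}$.

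One point of imprecision deserves attention because it is exactly the subtlety the paper flags as requiring care. You write that the vertical differential $d$ ``preserves total $\yy$-multidegree,'' but in the basis $\zz^\tau \otimes \yy^\bb \otimes \xx^\aa$ of Lemma~\ref{l:basis}, $d$ replaces $\yy^\bb$ by $\yy^{\bb+\ee_k}$, so the $\yy$-degree is \emph{not} preserved; what $d$ preserves is the combined $\zz\yy$-degree $\tau + \bb$. It is this $\zz\yy$-degree, not the $\yy$-degree or the total $\NN^n$-degree, that determines which simplicial splitting $\del^{\cc+}$ the vertical splitting $d^+$ applies, and the paper stresses that this matters precisely because $d^+$, unlike $d$ and $d_1$, is sensitive to the decomposition of a total-degree-$\bb$ element into tensor factors. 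Your argument does implicitly track the correct quantity (each summand of $(d_1 d^+)^{\ell-1}d_1$ lands in a uniquely determined $\zz\yy$-degree $\bb_j$), but a careful writeup should name the $\zz\yy$-degree explicitly rather than calling it a $\yy$-multidegree, since the latter is a different, non-preserved quantity.
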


\begin{remark}\label{r:visualize-maps}
To visualize the formula for~$D$, read the ``$\del$'' and ``$d$'' maps
in the following diagram from right to left, ignoring height, but note
that the products of the rightmost up-down and leftmost down-up maps
must be subtracted from the identity.
$$%
\begin{array}{c@{\ }c@{}c@{\ \ \,}c@{\ \,}c@{}c@{\!\!}c@{}}
  \wt C_{i-1} K^\aa I
& \xleftarrow{\textstyle\,d_1^{\lambda_\ell\,}}\qquad\qquad\quad
\\[1ex]
  \del^{\aa+}_i\longuparrow\!\longdownarrow\del^\aa_i\ \ 
& \qquad\longuparrow\del^{\bb_{\ell-1}+}_i
\\
& \wt C_{i-1} K^{\bb_{\ell-1}} I
& \xleftarrow{\textstyle d_1^{\lambda_{\ell-1}\!\!\!}}
\\[3ex]
& 
& \ddots
& \qquad\longuparrow\del^{\bb_2+}_i
\\
& 
& 
& \wt C_{i-1} K^{\bb_2} I
& \xleftarrow{\textstyle\,d_1^{\lambda_2}\,}\qquad\qquad
\\[1ex]
& 
& 
& 
& \qquad\longuparrow\del^{\bb_1+}_i
& 
& \del^\bb_{i+1}\longdownarrow\longuparrow\del^{\bb+}_{i+1}
\\
& 
& 
& 
& \wt C_{i-1} K^{\bb_1} I
& \xleftarrow{\textstyle\,d_1^{\lambda_1}\,}
& \wt C_i K^\bb I
\end{array}
$$
This diagram, when rotated counterclockwise by~$\pi/4$, is a
zoomed-in, labeled version of the chain-link fence in
Definition~\ref{d:fence}.
\end{remark}

The proof requires notation in which to make concerete computations.
It is nothing more than Definition~\ref{d:KK} expressed explicitly in
coordinates for a monomial ideal~$I$.

\begin{lemma}\label{l:basis}
$\KK_\sspot(I) \cong \bigwedge^{\!\spot\!} V \otimes I^\yy \otimes
\kk[\xx]$ has a $\kk$-linear basis $\zz^\tau \otimes \yy^\bb \otimes
\xx^\aa$ for
\begin{itemize}
\item%
$\zz^\tau \in \bigwedge^{\!|\tau|\!} V$,
\item%
$\yy^\bb \in I^\yy$, and
\item%
$\xx^\aa \in \kk[\xx]$.
\end{itemize}
The $\NN^n$-degree of $\zz^\tau \otimes \yy^\bb \otimes \xx^\aa$ is
$\tau + \bb + \aa$.  The differentials of~$\KK_\sspot$ in this basis
are
$$%
\begin{array}{ccc}
  \sum_{k \in \tau} (-1)^{\tau \minus k \subset \tau}\zz^{\tau-\ee_k}
                 \otimes \yy^\bb
                 \otimes \xx^{\aa+\ee_k}
& \stackrel{\textstyle\,d_1}\mapsfrom
& \zz^\tau \otimes \yy^\bb \otimes \xx^\aa
\\[-1ex]
&
& \downmapsto\,\raisebox{-1.5ex}{$d$}
\\
&
& \makebox[0pt][c]{$
  \sum_{k \in \tau} (-1)^{\tau \minus k \subset \tau}\zz^{\tau-\ee_k}
                 \otimes \yy^{\bb+\ee_k}
                 \otimes \xx^\aa$.}
\end{array}
$$ %
\end{lemma}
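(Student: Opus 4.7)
The plan is to prove the lemma by direct unpacking of Definition~\ref{d:KK}; there is no genuine obstacle, only careful bookkeeping of grading and sign conventions induced by the $I^{-\yy}$ twist.

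First I would establish the $\kk$-vector space isomorphism. Starting from $\KK_\sspot(I) = \KK_\sspot \otimes_{\kk[\yy]} I^{-\yy}$ with $\KK_\sspot = \kk[\xx] \otimes_\kk \bigwedge^{\!\spot\!} V \otimes_\kk \kk[\yy]$, associativity of $\otimes$ together with the tautology $\kk[\yy] \otimes_{\kk[\yy]} I^{-\yy} \cong I^{-\yy}$ yields
$$
  \KK_\sspot(I) \cong \kk[\xx] \otimes_\kk \bigwedge^{\!\spot\!} V \otimes_\kk I^{-\yy}.
$$
By Remark~\ref{r:total}, $I^{-\yy} = I^\yy$ as $\kk$-vector spaces, and reordering the $\kk$-tensor factors gives the stated isomorphism with $\bigwedge^{\!\spot\!} V \otimes I^\yy \otimes \kk[\xx]$. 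The natural bases of the three factors are $\{\zz^\tau : \tau \subseteq \{1,\dots,n\}\}$ for $\bigwedge^{\!\spot\!} V$, the monomials $\{\yy^\bb : \yy^\bb \in I\}$ for $I^\yy$, and $\{\xx^\aa : \aa \in \NN^n\}$ for $\kk[\xx]$, so the tensor product basis is the claimed one. Each factor carries an $\NN^n$-grading by its exponent vector ($\tau$, $\bb$, and $\aa$ respectively), and in an $\NN^n$-graded tensor product the degree of a pure tensor is the sum of the factor degrees, yielding total degree~$\tau + \bb + \aa$.

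For the differentials, Definition~\ref{d:KK} specifies that $d_1$ is induced by the Koszul complex $\KK^\xx_\spot$ and $d$ by $\KK^\yy_\spot$, each extended linearly over the other polynomial ring. Both are standard Koszul differentials: $d_1$ comes from the map $z_j \mapsto x_j$ and $d$ from $z_j \mapsto y_j$. The effect on the exterior factor is to remove one generator from~$\zz^\tau$ with the standard Koszul sign, which equals $(-1)^{\tau \minus k \subset \tau}$ because, under the identification of simplices with their characteristic vectors, the Koszul sign on a facet of an ordered simplex agrees with the simplicial boundary sign of Convention~\ref{conv:simplicial}. Multiplication by $x_k$ in the $\kk[\xx]$ factor produces $\xx^{\aa+\ee_k}$ directly, giving the $d_1$ formula. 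For $d$, multiplication by $y_k$ in the $\kk[\yy]$ factor of $\KK_\sspot$ is absorbed across $\otimes_{\kk[\yy]}$ into the $I^{-\yy}$ module structure; the identification $I^{-\yy} = I^\yy$ in Remark~\ref{r:total} is made so that the resulting basis element is $\yy^{\bb+\ee_k}$ (any overall sign arising from the $y_j \mapsto -y_j$ twist is absorbed into this identification, consistent with the way $\KK_\sspot(I)$ is used in Proposition~\ref{p:tot-KK}), producing the $d$ formula. That both differentials preserve the $\NN^n$-grading is immediate from the exponent shifts $\tau \mapsto \tau - \ee_k$ canceling with $\aa \mapsto \aa + \ee_k$ or $\bb \mapsto \bb + \ee_k$.
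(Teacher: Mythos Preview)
Your proposal is correct and follows exactly the approach the paper indicates: the paper gives no explicit proof but states just before the lemma that it ``is nothing more than Definition~\ref{d:KK} expressed explicitly in coordinates for a monomial ideal~$I$,'' and your argument is precisely this unpacking via associativity of tensor products, Remark~\ref{r:total}, and the standard Koszul differential formulas. Your careful handling of the $I^{-\yy}$ sign twist is a detail the paper leaves entirely implicit.
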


\begin{proof}[Proof of Theorem~\ref{t:choice-of-splitting}]
The vertical differentials of the Koszul bicomplex $\KK_\sspot(I)$ are
obtained from the chain complexes of Koszul simplicial complexes
of~$I$ by tensoring with~$\kk[\xx]$ over~$\kk$ (use
Lemma~\ref{l:basis} if this is not clear from
Convention~\ref{conv:koszul} and Definition~\ref{d:KK}).  The given
splittings $\del^{\bb+}$ thus induce a vertical splitting~$d^+$
of~$\KK_\sspot(I)$.

The natural Wall complex (Definition~\ref{d:natural-wall}) of this
vertically split Koszul bicomplex minimally resolves~$I$ by
Corollary~\ref{c:wall-koszul}.  The differentials in this resolution
are, by Definition~\ref{d:wall}, $D = \sum_j\wt\omega_j$ for the
homomorphisms $\wt\omega_j = (\1 - d^+ d) (d_1 d^+)^{j-1} d_1 (\1
-\nolinebreak d d^+)$ from Remark~\ref{r:natural-wall}.  The goal is
to determine the action of~$d^+$ on $\NN^n$-degree~$\bb$ Koszul cycles
in $\wt Z_i K^\bb I \otimes \kk[\xx] \subseteq (\KK_\spot^\yy)_\bb
\otimes \kk[\xx]$, and then the action of~$d$ on the output of
this~$d^+$, and the action of~$d_1$ on the output of
this~$d$,~and~so~on.

The reason why this requires care is that the action of~$d^+$ on an
$\NN^n$-degree~$\bb$ element of $\KK_\sspot(I)$ depends on how the
element decomposes in the basis from Lemma~\ref{l:basis}: the vertical
splitting is~$\del^{\bb+}$ only on basis vectors the form $\zz^\tau
\otimes \yy^{\bb-\tau} \otimes \xx^\aa$.  It is therefore crucial that
the $\NN^n$-degree~$\bb$ elements in $(\KK_\spot^\yy)_\bb \otimes
\kk[\xx]$ all have the form $\zz^\tau \otimes \yy^{\bb-\tau} \otimes
1$ and are not (say) mixtures in which the $\xx$-factors have nonzero
$\NN^n$-degree.

In contrast to~$d^+$, the actions of $d_1$ and~$d$ do not depend on
the tensor decomposition.  Let us start with~$d$.  The isomorphism
$\bigl(\KK_\spot^\yy(I^\yy)\bigr){}_\bb \cong \wt C_\spot K^\bb I$ of
the $\NN^n$-graded components of the columns of~$\KK_\sspot(I)$ with
chain complexes of Koszul simplicial complexes identifies $\zz^\tau
\otimes \yy^{\bb-\tau}$ with the face~$\tau$.  As such,
Lemma~\ref{l:basis} identifies~$d$ with the simplicial boundary
operator~$\del^\bb$ of~$\wt C_\spot K^\bb I$.  This is true regardless
of the $\xx$-factor and, indeed, regardless of the $\NN^n$-degree,
although of course for different $\NN^n$-degrees the differential
occurs in a different Koszul simplicial complex.  Importantly, the
$\zz\yy$-degree, for purposes of the splitting~$d^+$, does not change
under~$d$, as is visible from Lemma~\ref{l:basis}.

Similarly, $d_1$ acts on simplices~$\tau = \zz^\tau \otimes
\yy^{\cc-\tau} \otimes \xx^{\bb-\cc}$ of~$K^\cc I$ (thought of as
residing in $\NN^n$-degree~$\bb$ of $\wt C_\spot K^\cc I \otimes
\kk[\xx]$) as the boundary operator, but in this case the
$\zz\yy$-degree of the boundary face $\tau - \ee_k$ has
$\zz\yy$-degree $\cc-\ee_k$.  Therefore $d_1 = d_1^{\ee_1} + \dots +
d_1^{\ee_n}$ decomposes into the components that alter the
$\zz\yy$-degree by $\ee_1,\dots,\ee_n$.  Substituting this
decomposition of~$d_1$ back into the formula for~$\wt\omega_j$ and
introducing the $\NN^n$-degree indices
$\bb,\bb_1,\bb_2,\dots,\bb_{\ell-1},\bb_\ell=\aa$ on the upward and
downward differentials yields the sum over saturated decreasing
lattice paths $\lambda \in \Lambda(\aa,\bb)$, as desired.
\end{proof}

\begin{remark}\label{r:omega_j}
The proof of Theorem~\ref{t:choice-of-splitting} shows that the only
summand $\wt\omega_j$ contributing to the component $\HH_{i-1} K^\aa I
\otimes\kk[\xx] (-\aa) \from \HH_i K^\bb I \otimes \kk[\xx] (-\bb)$
induced by the homomorphism $\HH_{i-1} K^\aa I \smash{\stackrel{\
D}\ffrom} \HH_i K^\bb I$ is~$\wt\omega_\ell$, where $\ell = |\bb| -
|\aa|$.
\end{remark}


%
\begin{excise}{%
  \begin{lemma}\label{l:weights}
  The weights in Definition~\ref{d:weight} are integers.
  \end{lemma}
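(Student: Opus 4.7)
The plan is to apply Cramer's rule to the integer matrix restrictions $\del_T$, $\del_S$, and $\del_S \circ \del_T$ from Definition~\ref{d:hedge}, using the integral structure on $\wt B^\ZZ_{i-1} K^\bb I$ underlying Definition~\ref{d:Delta}. The sign weight $(-1)^{\sigma_j \subset \tau_{j-1}}$ is trivially integral, so the work concentrates on the other three weight types.

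For the cycle-link weight $\delta_{i,\aa}^2 c_{\sigma_\ell}(\sigma, T_{i-1}^\aa)$, I would observe that the circuit $\zeta_{T_{i-1}^\aa}(\sigma_\ell) = \sigma_\ell - t$ satisfies $\del_{T_{i-1}^\aa}(t) = \del(\sigma_\ell)$, where the right-hand side is an integer vector in $\wt B^\ZZ_{i-2} K^\aa I$. Since $\del_{T_{i-1}^\aa}$ (with respect to an integer basis of the target) is an integer matrix whose determinant has absolute value $|\delta_{i,\aa}|$, Cramer's rule gives that $\delta_{i,\aa} \cdot t$ has integer coordinates, so $\delta_{i,\aa}^2 c$ is an integer. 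The chain-link case is nearly identical: the shrub $s = (\del_{S_{i-1}^{\bb_j}} \circ \del_{T_i^{\bb_j}})^{-1}\sigma_j$ solves $(\del_S \circ \del_T)\, s = \sigma_j$, and $\del_S \circ \del_T$ is an integer matrix with determinant $\pm \delta_{i,\bb_j}$, so Cramer yields that $\delta_{i,\bb_j} \cdot s$ is integral. For the boundary-link, writing the hedge rim as $r(\tau) = \tau - \beta_{S_i^\bb}(\tau)$ with $\tau - r \in B$ and $r \in \kk\{\ol S_i^\bb\}$ produces an integer linear system for the $\wt B^\ZZ_{i-1} K^\bb I$-coordinates of $\tau - r$ whose coefficient matrix has determinant $\pm \delta_{i,\bb}$; Cramer once more bounds the denominators appearing in $r$ by $\delta_{i,\bb}$.

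The only mild subtlety is sign: each individual $\delta$ is defined only up to sign depending on the chosen integer basis of the boundary group, but the squared weight $\delta^2$ absorbs this ambiguity (cf.\ Remark~\ref{r:dets}). No real obstacle is expected; the lemma follows directly from Cramer's rule once the correct integer matrices and their determinants are identified, and in fact a single factor of $\delta$ (rather than $\delta^2$) would already suffice for integrality.
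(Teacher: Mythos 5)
Your Cramer's-rule argument is correct and complete. The paper takes a different route, stating that the denominators of the coefficients $c$ are orders of torsion elements in certain integer homology groups and therefore divide the relevant determinant; that sketch defers to auxiliary lemmas (labeled \emph{integer-circuit}, \emph{integer-shrub}, \emph{p:Y}) that, together with this lemma itself, have been excised from the present version of the paper, so its proof is not self-contained as written (it even still uses the superseded symbol $\theta$ for what is now $\delta$). The two approaches are really the same fact in two languages: the Cramer bound $\delta\,t\in\ZZ\{T\}$ for the solution of $\del_T(t)=\del(\sigma_\ell)$ is exactly the statement that $\del(\sigma_\ell)$ has order dividing $|\det\del_T|=|B^\ZZ/\del_T(\ZZ\{T\})|$ in the finite cokernel of the integral map $\del_T$, and similarly for $\del_{S\times T}$ in the chain-link case and for $\del_S$ in the boundary-link case. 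What your version buys is a self-contained, elementary argument resting only on Definitions~\ref{d:dets} and~\ref{d:Delta}; what the torsion phrasing buys is the conceptual link to Remark~\ref{r:torsion} and to the Catanzaro--Chernyak--Klein interpretation of the square determinants. Your closing remarks are also right: a single factor of $\delta$ already clears the denominators ($\delta c\in\ZZ$), and the square $\delta^2$ serves only to absorb the basis-dependent sign ambiguity of Remark~\ref{r:dets}.
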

  \begin{proof}
  The coefficients $c$ need not be integers, but each of their
  denominators divides the relevant~$\theta$ (let alone $\theta^2$)
  because these denominators are orders of torsion elements in the
  relevant homology groups by Lemma\verb=~\ref{l:integer-circuit}=,
  Lemma\verb=~\ref{l:integer-shrub}=, and Proposition\verb=~\ref{p:Y}=.
  \end{proof}
}\end{excise}%

\begin{proof}[Proof of Theorem~\ref{t:sylvan}]
Given Theorem~\ref{t:choice-of-splitting}, it remains only to prove
that the formula for~$D$ in Theorem~\ref{t:choice-of-splitting}
specializes to the canonical sylvan homomorphism with entries
$$%
  D_{\sigma\tau}
  =
  \sum_{\lambda \in \Lambda(\aa,\bb)}
  \frac{1}{\Delta_{i,\lambda} I}
  \sum_{\phi \in \Phi_{\sigma\tau}(\lambda)}
  w_\phi
$$
from Definition~\ref{d:sylvan-matrix} when all of the
splittings~$\del^{\bb_j+}$ are the Moore--Penrose pseudoinverses of
the differentials~$\del^{\bb_j}$.  The proof is lattice path by
lattice path, so fix henceforth a saturated decreasing lattice
path~$\lambda$ from~$\bb$ to~$\aa$ of length $\ell = |\bb| - |\aa|$.
Fix as well the simplices $\tau \in K^\bb_i I$ and $\sigma \in
K^\aa_{i-1} I$.

The Projection Hedge Formula (Proposition~\ref{p:projection-hedge})
for $\pi_{B_i} = \del_{i+1} \del^+_{i+1}$ shows that%
\enlargethispage*{1ex}%
\begin{align*}
\1 - \del_{i+1} \del^+_{i+1}
  &=
  \frac{1\,}{\Delta_i^S\!}
  \Bigl(\Delta_i^S \1 - \sum_{S_i} \delta_{S_i}^2 \beta_{S_i}\Bigr)
\\[-.2ex]
  &=
  \frac{1\,}{\Delta_i^S\!}
  \sum_{S_i} \delta_{S_i}^2(\1 - \beta_{S_i}).
\end{align*}
In the image of~$\tau$ under this homomorphism at~$\bb \in \NN^n$, the
coefficient on~$\tau_0$ in the summand for the stake set~$S_i^\bb$ is
the weight of the boundary-link $\tau_0$\,---\,$\tau$ by
\mbox{Definition}~\ref{d:chain-linked}.\ref{i:boundary}.  Summing over
stake sets and dividing by $\Delta_i^S K^\bb I$ yields the $\tau_0
\tau$ matrix entry in~$\del_{i+1} \del^+_{i+1}$.

Let $j \geq 1$.  In the image of any $i$-simplex $\tau_{j-1}$ under
$d_1 = d_1^{\ee_1} + \dots + d_1^{\ee_n}$, where $d_1^{\ee_k}$ alters
the $\zz\yy$-degree by~$\ee_k$ (see the end of the proof of
Theorem~\ref{t:choice-of-splitting}), the coefficient on~$\sigma_j$
is~$0$ unless $\sigma_j = \tau_{j-1} - \lambda_j$ as in
Definition~\ref{d:fence}, in which case the coefficient output by
Theorem~\ref{t:choice-of-splitting} is a sign---the correct one for
the containment
\raisebox{-.5ex}[0pt][0pt]{$\sigma_j$}\,%
\raisebox{.4ex}{\tiny$\diagup$}\,%
\raisebox{1ex}[0pt][0pt]{$\tau_{j-1}$}
by~Definition~\ref{d:weight}.

In the image of any $(i-1)$-simplex~$\sigma_j$ under the
Moore--Penrose pseudoinverse~$\del_i{}^{\!\!\bb_j+\!}$ of the boundary
$\del_i{}^{\!\!\bb_j\!}$, the coefficient on~$\tau_j$ in the summand
indexed by the hedge~$\st_i{}^{\!\bb_j\!}$ in the Hedge Formula
(Corollary~\ref{c:hedge}) is the weight of the chain-link
\raisebox{1ex}[0pt][0pt]{$\tau_{j\!}$}\,\,%
\raisebox{.25ex}{\tiny$\diagdown$}\,%
\raisebox{-.75ex}[0pt][0pt]{$\sigma_j$}
by Proposition~\ref{p:chain-linked}.  Summing over hedges and dividing
by $\Delta_i^\st K^{\bb_j} I$ yields the $\sigma_j \tau_j$ matrix
entry in~$\del_i{}^{\!\!\bb_j+}$.

In the image of any $(i-1)$-simplex~$\sigma_\ell$ under orthogonal
projection $\1\! - \del_i{}^{\!\!\aa+} \del_i^\aa$ to the cycles
\raisebox{0pt}[0pt][0pt]{$\wt Z_{i-1} K^\aa I$}, the coefficient
on~$\sigma$ in the summand indexed by the shrubbery~$T_{i-1}^\aa$ in
the Projection Hedge Formula (Corollary~\ref{c:projection-hedge} and
Proposition~\ref{p:projection-hedge}) is the weight of the cycle-link
$\sigma$\,---\,$\sigma_\ell$
Definition~\ref{d:chain-linked}.\ref{i:cycle}.  Summing over
shrubberies and dividing by $\Delta_{i-1}^T K^\aa I$ yields
the~$\sigma_\ell \sigma$~\mbox{matrix}~entry~in~$\1\! -\nolinebreak
\del_i{}^{\!\!\aa+} \del_i^\aa$.

Summing the products of these three kinds of sums and the sign over
all chain-link fences from~$\tau$ to~$\sigma$ yields the matrix
entry~$D_{\sigma\tau}$ by matrix multiplication from elementary linear
algebra.  Definition~\ref{d:sylvan-matrix} expresses this product of
sums as a sum of~products.
\end{proof}

\section{Noncanonical sylvan resolutions}\label{s:noncanonical}

\noindent
The master formula for Wall resolutions from Koszul simplicial
splittings (Theorem~\ref{t:choice-of-splitting}) has the consequence
that once the canonicality requirement is dropped, our constructions
work universally, combinatorially, and minimally.  The format is
basically the same as Theorem~\ref{t:sylvan}, but there is no division
and the weights are simpler.

\begin{defn}\label{d:simple-weight}
Each chain-link fence edge has a \emph{simple weight} over~$\kk$:
\begin{itemize}\itemsep=1ex
\item%
the boundary-link $\tau_0$\,---\,$\tau$ has simple weight
$c_\tau(\tau_0,S{}_i^{\,\bb})$,
\item%
the chain-link
\raisebox{2ex}[0pt][0pt]{$\tau_j\!$}\,\,\raisebox{.5ex}{\tiny$\diagdown$}\,%
\raisebox{-1ex}[0pt][0pt]{$\sigma_j$}
has simple weight $c_{\sigma_j}(\tau_j,\st_i{}^{\!\!\bb_j})$,
\item%
the containment\,
\raisebox{-1.25ex}[0pt][0pt]{$\,\sigma_j$}\raisebox{.25ex}{\tiny$\diagup$}\,%
\raisebox{1.25ex}[0pt][0pt]{$\tau_{j-1}$} 
has simple weight $(-1)^{\sigma_j \subset \tau_{j-1}}$, and
\item%
the cycle-link $\sigma$\,---\,$\sigma_\ell$ has simple weight
$c_{\sigma_\ell}(\sigma,T_{i-1}^\aa)$.
\end{itemize}
The \emph{simple weight} of the fence $\phi$ is the
product~$w^\kk_\phi$ of the simple weights on its edges.
\end{defn}

\begin{defn}\label{d:community}
Fix a CW complex~$K$.  A \emph{community} in~$K$ is a sequence
$$%
  \st_\spot
  =
  (\st_0,\st_1,\st_2,\dots)
  \quad\text{with}\quad
  T_i \cap S_i = \nothing
  \text{ for all }i.
$$
\end{defn}

\begin{prop}\label{p:community}
Any community $\st_\spot$ induces a differential
$\del^+_{\st_\subspot}\!$ over~$\kk$ such that
\begin{enumerate}
\item\label{i:dd+d}%
$\del_i \del^+_{\st_i} \del_i = \del_i$ and
\item\label{i:d+dd+}%
$\del^+_{\st_i} \del_i \del^+_{\st_i} = \del^+_{\st_i}$.%
\end{enumerate}
\end{prop}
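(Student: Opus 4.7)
The plan is to build $\del^+_{\st_\subspot}$ dimension by dimension via Definition~\ref{d:hedge-splitting}: at stage~$i$ the hedge $\st_i = (S_{i-1}, T_i)$ is a hedge for the simplicial boundary $\del_i: \wt C_i K \to \wt C_{i-1} K$, producing a hedge splitting $\del^+_{\st_i}: \wt C_{i-1} K \to \wt C_i K$. First I would check that $\del^+_{\st_i}$ is well-defined, which reduces to showing that $\ol{S_{i-1}} \cup \del_i T_i$ is a basis of $\wt C_{i-1} K$. This follows immediately from the shrubbery condition on~$T_i$, which gives an isomorphism $\del_i: \kk\{T_i\} \simto \wt B_{i-1} K$, together with the stake-set condition on~$S_{i-1}$, which gives the direct sum decomposition $\kk\{\ol{S_{i-1}}\} \oplus \wt B_{i-1} K = \wt C_{i-1} K$.

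Next I would dispatch conditions (\ref{i:dd+d}) and (\ref{i:d+dd+}) by checking on basis elements. Any chain in $\wt C_i K$ decomposes via the $T_i$-circuit projection of Example~\ref{e:unique-vector} as a cycle plus a chain $t \in \kk\{T_i\}$; the cycle component is annihilated by~$\del_i$, while Definition~\ref{d:hedge-splitting}(\ref{i:zeta}) gives $\del^+_{\st_i} \del_i t = t$. Condition (\ref{i:dd+d}) then collapses to $\del_i t = \del_i t$, and condition (\ref{i:d+dd+}) follows analogously because the image of $\del^+_{\st_i}$ sits inside $\kk\{T_i\}$, where $\del^+_{\st_i} \del_i$ acts as the identity.

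The substantive step, where the community hypothesis earns its keep, is the assertion that the $\del^+_{\st_i}$ assemble into an honest differential, i.e.\ $\del^+_{\st_{i+1}} \del^+_{\st_i} = 0$. Definition~\ref{d:hedge-splitting} places the image of $\del^+_{\st_i}$ inside $\kk\{T_i\}$, and the community condition $T_i \cap S_i = \nothing$ gives $T_i \subseteq \ol{S_i}$, hence $\kk\{T_i\} \subseteq \kk\{\ol{S_i}\}$. By Definition~\ref{d:hedge-splitting}(\ref{i:non-stake}) applied to $\st_{i+1} = (S_i, T_{i+1})$, the map $\del^+_{\st_{i+1}}$ vanishes identically on~$\kk\{\ol{S_i}\}$, so the composite vanishes as well. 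There is no genuine obstacle; the community hypothesis has been engineered precisely so that $(\del^+)^2 = 0$ falls out by this support argument.
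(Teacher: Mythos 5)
Your proof is correct and follows essentially the same route as the paper's: the $\del^2_+ = 0$ step is the support argument via $T_i \subseteq \ol{S_i}$, property (\ref{i:dd+d}) is the circuit-projection decomposition (the content of Proposition~\ref{p:hedge-splitting}), and property (\ref{i:d+dd+}) is a check on the basis $\ol{S_{i-1}} \cup \del_i T_i$. Your added well-definedness remark and the observation that $\del^+_{\st_i}\del_i$ restricts to the identity on $\kk\{T_i\}$ are harmless elaborations rather than a genuinely different argument.
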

\begin{proof}
The disjointness of $T_{i-1}$ and~$S_{i-1}$ means that $T_{i-1}
\subseteq \ol S_{i-1}$, so $\del^+_{\st_{i+1}}\del^+_{\st_i} = 0$ by
Definition~\ref{d:hedge-splitting}.  Property~\ref{i:dd+d} follows
from Proposition~\ref{p:hedge-splitting} because $\zeta_{T_i}(\tau)$
is a cycle---that is, $\del_i\zeta_{T_i}(\tau) = 0$.
Property~\ref{i:d+dd+} is immediate from
Definition~\ref{d:hedge-splitting}, with both sides of the equation
being~$0$ for non-stakes and $\tau$ when applied to any
boundary~$\del_i\tau$.
\end{proof}


\begin{defn}\label{d:noncanonical-sylvan-matrix}
Fix a monomial ideal~$I$ and a community (Definition~\ref{d:community}
and Proposition~\ref{p:community}) for each Koszul simplicial
complex~$K^\bb I$.  These data endow each lattice path $\lambda \in
\Lambda(\aa,\bb)$ with a fixed hedgerow~$ST_i^\lambda$ for $i =
0,\dots,n$.  The \emph{sylvan homomorphism}
$$%
  \wt C_{i-1} K^\aa I
  \ \stackrel{\ D\ =\ D^{\aa\bb}}\filleftmap\
  \wt C_i K^\bb I
$$
for these data is given by its \emph{sylvan matrix}, whose entry
$D_{\sigma\tau}$ for $\tau \in K_i^\bb I$ and $\sigma \in K_{i-1}^\aa
I$ is the sum, over all lattice paths from~$\bb$~to~$\aa$, of the
weights of all chain-link fences from~$\tau$ to~$\sigma$ that are
subordinate (Remark~\ref{r:subordinate}) to the relevant
hedgerow~$ST_i^\lambda$:
$$%
  D_{\sigma\tau}
  =
  \sum_{\lambda \in \Lambda(\aa,\bb)}
  \sum_{\substack{\phi \in \Phi_{\sigma\tau}(\lambda)\\\phi\vdash ST_i^\lambda}}
  w^\kk_\phi.
$$
\end{defn}

\begin{cor}\label{c:noncanonical-sylvan}
Fix a monomial ideal~$I$ and a community for each Koszul simplicial
complex~$K^\bb I$.  The sylvan homomorphism for these data on each
comparable pair $\bb \succ \aa$ of lattice points induces a
homomorphism $\wt Z_{i-1} K^\aa I \from \wt Z_i K^\bb I$ that vanishes
on~$\wt B_i K^\bb I$, and hence it induces a well defined \emph{sylvan
homology morphism} $\HH_{i-1} K^\aa I \from \HH_i K^\bb I$.  The
induced homomorphisms
$$%
  \HH_{i-1} K^\aa I \otimes \kk[\xx] (-\aa)
  \,\from\,
  \HH_i K^\bb I \otimes \kk[\xx] (-\bb)
$$
of\hspace{.3ex} $\NN^n$-graded free $\kk[\xx]$-modules constitute a
minimal free resolution of~$I$.
\end{cor}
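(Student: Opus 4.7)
The plan is to deduce this corollary from Theorem~\ref{t:choice-of-splitting} by feeding in the hedge splittings furnished by the given communities and then translating each factor of the master Wall formula for $D$ into the fence combinatorics of Definition~\ref{d:noncanonical-sylvan-matrix}. This mirrors the proof of Theorem~\ref{t:sylvan}, except that no averaging over hedges occurs, because a community fixes a single hedge at each $\bb$ in each homological stage.

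First I would apply Proposition~\ref{p:community} at every lattice point $\bb \in \NN^n$: the community $\st_\spot^\bb$ at $\bb$ yields, for each homological stage $i$, a hedge splitting $\del^{\bb+}_{\st_i^\bb}$ that is itself a differential and satisfies both $\del\del^+\del = \del$ and $\del^+\del\del^+ = \del^+$. Thus the hypotheses of Theorem~\ref{t:choice-of-splitting} are met, so the associated Wall complex is automatically a minimal $\NN^n$-graded free resolution of~$I$. All that remains is to identify each entry of its differential with the corresponding sylvan matrix entry $D_{\sigma\tau}$ from Definition~\ref{d:noncanonical-sylvan-matrix}.

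For this identification I would expand the product
$$
  (\1 - \del^{\aa+}_i \del^\aa_i)\,
  d_1^{\lambda_\ell}
  \Bigl(\prod_{j=1}^{\ell-1}
    \del^{\bb_j+}_i\, d_1^{\lambda_j}
  \Bigr)
  (\1 - \del^\bb_{i+1} \del^{\bb+}_{i+1})
$$
factor by factor on the simplex basis, using the hedge-splitting dictionary from Section~\ref{s:pseudo}. The rightmost factor $(\1 - \del^\bb_{i+1}\del^{\bb+}_{i+1})(\tau)$ equals the hedge rim $r(\tau) = \sum_{\tau_0} c_\tau(\tau_0, S_i^\bb)\,\tau_0$ by Lemma~\ref{l:bS} and Definition~\ref{d:chain-linked}.\ref{i:boundary}, contributing boundary-link weights; each $d_1^{\lambda_j}(\tau_{j-1})$ vanishes unless $\lambda_j \in \tau_{j-1}$, in which case it equals $(-1)^{\tau_{j-1}\minus\lambda_j \subset \tau_{j-1}}(\tau_{j-1}-\lambda_j)$, supplying both the constraint $\sigma_j = \tau_{j-1} - \lambda_j$ and the containment sign weight; each interior splitting $\del^{\bb_j+}_i(\sigma_j)$ vanishes unless $\sigma_j \in S_{i-1}^{\bb_j}$, in which case Proposition~\ref{p:chain-linked} identifies it with the shrub $\sum_{\tau_j} c_{\sigma_j}(\tau_j, \st_i^{\bb_j})\,\tau_j$, contributing chain-link weights; and the leftmost factor $(\1 - \del^{\aa+}_i \del^\aa_i)(\sigma_\ell)$ equals the circuit $\zeta_{T_{i-1}^\aa}(\sigma_\ell) = \sum_\sigma c_{\sigma_\ell}(\sigma, T_{i-1}^\aa)\,\sigma$ by Proposition~\ref{p:hedge-splitting}, contributing cycle-link weights. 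Matrix multiplication then collects these into a sum indexed by sequences $(\tau_0,\sigma_1,\tau_1,\dots,\sigma_\ell)$, which are precisely the chain-link fences from $\tau$ to $\sigma$ along $\lambda$ that are subordinate (Remark~\ref{r:subordinate}) to the hedgerow pinned down by the community; the product of the four edge contributions is exactly the simple weight $w^\kk_\phi$ of Definition~\ref{d:simple-weight}. Summing over $\lambda \in \Lambda(\aa,\bb)$ then reproduces $D_{\sigma\tau}$ on the nose.

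The main obstacle is purely bookkeeping---matching the Wall factors with the fence edges one by one. Unlike the canonical case (Theorem~\ref{t:sylvan}), no averaging via the Hedge Formula or the Projection Hedge Formula is invoked, so no division enters and the conclusion holds in arbitrary characteristic, as claimed. A minor point to confirm along the way is the local finiteness of $\omega_\spot$ demanded by Definition~\ref{d:wall}, but this is immediate because only finitely many saturated decreasing lattice paths connect any fixed pair $\bb \succ \aa$, so only the single summand $\wt\omega_\ell$ with $\ell = |\bb|-|\aa|$ contributes to the component indexed by $(\aa,\bb)$ (Remark~\ref{r:omega_j}).
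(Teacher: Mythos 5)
Your proof is correct and takes the same approach the paper intends: the paper's own proof is a one-line pointer to the proof of Theorem~\ref{t:sylvan}, observing that it works mutatis mutandis with the fixed hedgerows eliminating the averaging sums. You have simply unwound that remark, matching each factor of the Wall differential from Theorem~\ref{t:choice-of-splitting} directly to a fence edge via Lemma~\ref{l:bS}, Proposition~\ref{p:chain-linked}, and Proposition~\ref{p:hedge-splitting} instead of via the Hedge and Projection Hedge Formulas, which is exactly the intended simplification.
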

\begin{proof}
The proof of Theorem~\ref{t:sylvan} in Section~\ref{s:resolutions},
which is already done lattice path by lattice path, works mutatis
mutandis in this setting but simplifies because the fixed hedgerows
eliminate the summations over stake sets, hedges, and shrubberies.
\end{proof}

\begin{remark}\label{r:dependencies}
Since Corollary~\ref{c:noncanonical-sylvan} occurs at the end of this
paper, it is worth taking precise account of the relatively meager
prerequisites---beyond standard constructions like $K^\bb I$---on
which its statement (but not its proof) relies.  It requires the
notions of
\begin{itemize}
\item%
shrubbery, stake, hedge (Example~\ref{e:CW}) and their coefficients
(Definition~\ref{d:chain-linked});
\item%
hedgerow (Definition~\ref{d:hedgerow}) to assemble this combinatorics
along lattice~paths;
\item%
chain-link fence (Definition~\ref{d:fence}) with simple weights
(Definition~\ref{d:simple-weight}); and
\item%
community \hspace{-.15ex}(Definition\,\ref{d:community}) and hedge
splitting \hspace{-.15ex}(Definition\,\ref{d:hedge-splitting}) for
the~\mbox{differential}.
\end{itemize}
\end{remark}

\begin{remark}\label{r:sylvan-resolution}
In general, a minimal free resolution
ought to be called \emph{sylvan} if its differentials are expressed as
linear combinations of those for individual choices of hedges.  Thus
the canonical resolutions in Theorem~\ref{t:sylvan} are sylvan because
its differentials are weighted averages of differentials from hedges,
and the resolutions in Corollary~\ref{c:noncanonical-sylvan} are
sylvan because each fixes single choices of hedges.  Of course, all
minimal free resolutions of a given graded ideal are isomorphic; the
question is how the resolution is expressed.  Usually in commutative
algebra the differentials are expressed by selecting bases for the
syzygies.  In contrast the sylvan method avoids choosing such bases,
even in Corollary~\ref{c:noncanonical-sylvan}, because the syzygies
are naturally homology vector spaces.  Instead, the sylvan method
selects bases for chains in a manner that descends to homology.
\end{remark}

\begin{remark}\label{r:computation}
Canonical sylvan resolutions in Theorem~\ref{t:sylvan} are not suited
to efficient algorithms, as they require storage, manipulation, and
sums over bases for chains in simplicial complexes.  In contrast,
noncanonical sylvan resolutions could potentially lead to efficient
algorithmic computation of free resolutions, since they select bases
not for chains but for homology and cohomology (see
Remark~\ref{r:coboundary}) in each $\NN^n$-degree while never actually
computing homology.  It helps that communities have been independently
and simultaneously invented in the context of persistent homology,
where there are called ``tripartitions''
\cite{edelsbrunner-olsbock2018}.  The computational algebra of these
could be particularly helpful, as relevant algorithms have been
implemented.
\end{remark}

\begin{example}\label{e:RP2}
Noncanonical sylvan resolutions provide combinatorial minimal free
resolutions of monomial ideals whose Betti numbers vary with the
characteristic of the field, such as the Stanley--Reisner ideal of the
six-vertex triangulation of the real projective plane.  In any
characteristic other than~$2$, this ideal has a minimal cellular free
resolution of length~$2$; see \cite[Section~4.3.5]{cca}, for instance.
But in characteristic~$2$, the top Betti number is at homological
stage~$3$, namely $\betti 3\1 I = 1$, where here $\1 = (1,1,1,1,1,1)$.
A~sylvan resolution compensates by selecting hedges that respect the
dependencies in characteristic~$2$.  In particular, although the stake
set $S_2^\1 = \nothing$ at~$\1$ is forced, because $K^\1 I = \RR\PP^2$
has dimension~$2$, the stake sets at degrees $\1 - \ee_i$ that differ
from~$\1$ by a standard basis vector~$\ee_i$ have cardinality~$1$ in
characteristic~$2$, each consisting of any single edge in the relevant
Koszul simplicial complex, thereby allowing the construction of
chain-link fences to get started.
\end{example}

\begin{remark}\label{r:eliahou-kervaire}
Judicious choices of communities in
Corollary~\ref{c:noncanonical-sylvan} can recover known special
classes of resolutions of monomial ideals, such as the
Eliahou--Kervaire resolution of any Borel-fixed or stable ideal
\cite{eliahou-kervaire1990} (see also \cite[Chapter~2]{cca}) or any
planar graph resolution of a trivariate ideal \cite{miller-Planar2002}
(see also \cite[Chapter~3]{cca}).  These assertions require proof;
they are planned for subsequent papers.
\end{remark}



\begin{thebibliography}{CLR$^{+\!}$15}
\raggedbottom

\bibitem[BW02]{batzies-welker2002}
Ekkehard Batzies and Volkmar Welker, \emph{Discrete Morse theory for
  cellular resolutions}, J.\,Reine Angew. Math. \textbf{543} (2002),
  147-–168.

\bibitem[BPS98]{bayer-peeva-sturmfels1998}
Dave Bayer, Irena Peeva, and Bernd Sturmfels, \emph{Monomial
  resolutions}, Math. Res. Lett. \textbf{5} (1998), no.~1--2, 31--46.

\bibitem[BS98]{bayer-sturmfels1998}
Dave Bayer and Bernd Sturmfels, \emph{Cellular resolutions of monomial
  modules}, J.\,Reine Angew. Math. \textbf{502} (1998), 123--140.

\bibitem[BT90]{benTal-teboulle1990}
Aharon Ben-Tal and Marc Teboulle,
  \emph{A geometric property of the least squares solution of linear
  equations}, Linear Algebra Appl.\,\textbf{139} (1990), 165--170.

\bibitem[Ber86]{berg1986}
Lothar Berg, \emph{Three results in connection with inverse matrices},
  Proceedings of the symposium on operator theory (Athens,
  1985), Linear Algebra Appl.\,\textbf{84} (1986), 63--77.

\bibitem[BH98]{bruns-herzog}
Winfried Bruns and J\"urgen Herzog, \emph{Cohen--Macaulay rings},
  revised edition, Cambridge Studies in Advanced Mathematics Vol.~39,
  Cambridge University Press, Cambridge, 1998.

\bibitem[CCK15]{catanzaro-chernyak-klein2015}
Michael J. Catanzaro, Vladimir Y. Chernyak, and John R. Klein,
  \emph{Kirchhoff's theorems in higher dimensions and Reidemeister
  torsion}, Homology Homotopy Appl.~\textbf{17} (2015), no.\,1, 165--189.

\bibitem[CCK17]{catanzaro-chernyak-klein2017}
Michael J. Catanzaro, Vladimir Y. Chernyak, and John R. Klein,
  \emph{A higher Boltzmann distribution},
  J. Appl. Comput. Topol. \textbf{1} (2017), no.\,2, 215--240.

\bibitem[CT19]{clark-tchernev2019}
Timothy B. P. Clark and Alexandre B. Tchernev, \emph{Minimal free
  resolutions of monomial ideals and of toric rings are supported on
  posets}, Trans. Amer. Math. Soc. \textbf{371} (2019), no.\,6,
  3995--4027.


\bibitem[DKM09]{duval-klivans-martin2009}
Art M. Duval, Caroline J. Klivans, and Jeremy L. Martin,
  \emph{Simplicial matrix-tree theorems},
  Trans. Amer. Math. Soc.~\textbf{361} (2009), 607-611.

\bibitem[DKM11]{duval-klivans-martin2011}
Art M. Duval, Caroline J. Klivans, and Jeremy L. Martin,
  \emph{Cellular spanning trees and Laplacians of cubical complexes},
  Adv. in Appl. Math.~\textbf{46} (2011), 247-–274.

\bibitem[DKM09]{duval-klivans-martin2015}
Art M. Duval, Caroline J. Klivans, and Jeremy L. Martin,
  \emph{Cuts and flows of cell complexes}, J. Algebraic
  Combin.~\textbf{41} (2015), no.\,4, 969--999.

\bibitem[Eag90]{eagon1990}
John~A. Eagon, \emph{Partially split double complexes with an
  associated Wall complex and applications to ideals generated by
  monomials}, J. Algebra \textbf{135} (1990), no.~2, 344--362.

\bibitem[E\"O18]{edelsbrunner-olsbock2018}
Herbert Edelsbrunner and Katharina \"Olsb\"ock, \emph{Holes and
  dependences in an ordered complex}, Computer Aided Geometric Design
  \textbf{73} (2019), 1--15.

\bibitem[EFS03]{eisenbud-floystad-schreyer2003}
David Eisenbud, Gunnar Fl\o ystad, and Frank-Olaf Schreyer,
  \emph{Sheaf cohomology and free resolutions over exterior algebras},
  Transactions of the American Mathematical Society \textbf{355}
  (2003), no.\,11, 4397--4426.

\bibitem[EK90]{eliahou-kervaire1990}
Shalom Eliahou and Michel Kervaire, \emph{Minimal resolutions of some
  monomial ideals}, J.\,Algebra \textbf{129} (1990), no.~1, 1--25.

\bibitem[EN62]{eagon-northcott1962}
John A. Eagon and Douglas Geoffrey Northcott, \emph{Ideals defined by
  matrices and a certain complex associated with them},
  Proc. Roy. Soc. Ser. A \textbf{269} (1962) 188--204.


\bibitem[Hoc77]{hochster1977}
Melvin Hochster, \emph{Cohen--Macaulay rings, combinatorics, and
  simplicial complexes}, Ring theory, II (Proc. Second Conf.,
  Univ. Oklahoma, Norman, Okla., 1975) (B.~R. McDonald and R.~Morris,
  eds.), Lecture Notes in Pure and Applied Mathematics Vol.~26,
  Marcel Dekker, New York, 1977, pp.~171--223.

\bibitem[Kal83]{kalai1983}
Gil Kalai, \emph{Enumeration of $\mathbb{Q}$-acyclic simplicial
  complexes}, Israel J. Math.~\textbf{45}, no.\,4 (1983), 337-–351.

\bibitem[Lyo09]{Lyons2009}
Russell Lyons, \emph{Random complexes and $\ell_2$-Betti numbers},
  J. Topol. Anal.~1 (2009), 153--175.

\bibitem[Lyu88]{lyubeznik1988}
Gennady Lyubeznik, \emph{A new explicit finite free resolution of
  ideals generated by monomials in an $R$-sequence}, J. Pure
  Appl. Algebra \textbf{51} (1988), no.\,1--2, 193--195.

\bibitem[Mil00]{alexdual}
Ezra Miller, \emph{The Alexander duality functors and local duality
  with monomial support}, Journal of Algebra \textbf{231} (2000),
  180--234.


\bibitem[Mil02]{miller-Planar2002}
Ezra Miller, \emph{Planar graphs as minimal resolutions of trivariate
  monomial ideals}, Documenta Math.~\textbf{7}~(2002), 43--90.

\bibitem[MS05]{cca}
Ezra Miller and Bernd Sturmfels, \emph{Combinatorial commutative
  algebra}, Graduate Texts in Mathematics, vol.~227, Springer-Verlag,
  New York, 2005.

\bibitem[MSY00]{miller-sturmfels-yanagawa2000}
Ezra Miller, Bernd Sturmfels, and Kohji Yanagawa, \emph{Generic and
  cogeneric monomial ideals}, J. Symbolic Comput. \textbf{29} (2000),
  691-- 708.


\bibitem[OW16]{olteanu-welker2016}
Anda Olteanu and Volkmar Welker, \emph{The Buchberger resolution},
  J. Commut. Algebra \textbf{8} (2016), no.\,4, 571--587.

\bibitem[Pet09]{petersson2009}
Anna Petersson, \emph{Enumeration of spanning trees in simplicial
  complexes}, Uppsala University Department of Mathematics,
  Report~2009:13 (May 18, 2009).

\bibitem[Tay66]{taylor1966}
Diana Taylor, \emph{Ideals generated by monomials in an
  {$R$}-sequence}, Ph.D.~thesis, University of Chicago, 1966.

\bibitem[Tch19]{tchernev2019}
Alexandre Tchernev, \emph{Dynamical systems on chain complexes and
  canonical minimal resolutions}, preprint, 2019.
  \textsf{arXiv:math.AC/1909.08577}

\bibitem[TV15]{tchernev-varisco2015}
Alexandre Tchernev and Marco Varisco, \emph{Modules over categories
  and Betti posets of monomial ideals},
  Proc. Amer. Math. Soc. \textbf{143} (2015), no.\,12, 5113--5128.

\bibitem[Yuz99]{yuzvinsky1999}
Sergey Yuzvinsky, \emph{Taylor and minimal resolutions of homogeneous
  polynomial ideals}, Math. Res. Lett.\,\textbf{6} (1999), no.\,5--6,
  779--793.

\end{thebibliography}
\end{document}